\DeclareSymbolFont{usualmathcal}{OMS}{cmsy}{m}{n}
\DeclareSymbolFontAlphabet{\mathcal}{usualmathcal}
\def\e{{\rm e}}
\def\cic{\boldsymbol}
\def\eps{\varepsilon}
\def\d{{\rm d}}
\def\dist{{\rm dist}}
\def\R {\mathbb{R}}
\def\F {{\mathcal F}}
\def\T{{\mathsf{T}}}
\def\M {{\mathsf M}}
\def\Tt {{\mathcal T}}
\def \l {\langle}
\def \r {\rangle}
\def \and{\qquad\text{and}\qquad}
\newcommand{\supp}{\mathrm{supp}\,}
\newtheorem{proposition}{Proposition}
\newtheorem{theorem}{Theorem}
\newtheorem*{theorem*}{Theorem}
\newtheorem*{proposition*}{Proposition}
\newtheorem{lemma}[proposition]{Lemma}
\theoremstyle{definition}
\newtheorem{definition}[proposition]{Definition}
\newtheorem{remark}[proposition]{Remark}
\numberwithin{proposition}{section}
\numberwithin{equation}{section}
\title[Modulation invariant Carleson embedding]{A modulation invariant Carleson embedding theorem \\ outside local $L^2$}
\author{Francesco Di Plinio}
\address{\noindent Brown University Mathematics
Department, Box 1917,
Providence, RI 02912, USA}
\email{fradipli@math.brown.edu \textrm{(F.\ Di Plinio)} }
\author{Yumeng Ou}
\address{\noindent Brown University Mathematics
Department, Box 1917,
Providence, RI 02912, USA}
\email{yumeng\_ou@brown.edu \textrm{(Y.\ Ou)} }
 \subjclass[2000]{Primary: 42B20. Secondary: 42B25}
 \keywords{Wave packet transform, Carleson embedding theorems, bilinear Hilbert transform, outer measure theory}
\thanks{FDP was partially
supported by the National Science Foundation under the grant
   NSF-DMS-1500449. YO was partially supported by   the National Science Foundation under the grant
  NSF-DMS 0901139}
\begin{document}  
 \begin{abstract}  The article \cite{DoThiele15} develops a theory of  Carleson embeddings in   outer $L^p$ spaces for the wave packet transform
$$ F_\phi(f)(u,t,\eta)= \int f(x) \e^{i\eta(u-x)} \phi\left( \frac{u-x}{t}\right) \, \frac {\d x}{t}, \qquad (u,t,\eta) \in \R\times (0,\infty)\times \R$$
of functions $f \in L^p(\R)$, in the  $2\leq p\leq \infty$ range referred to as local $L^2$. In this article, we formulate  a   suitable extension of this theory to exponents $1<p<2$,  answering a question posed in \cite{DoThiele15}. The proof of our main embedding theorem  involves a refined  multi-frequency Calder\'on-Zygmund decomposition in the vein of \cite{DPTh2014,NOT}. We apply our embedding theorem  to recover the full known range of $L^p$ estimates for the bilinear Hilbert transforms \cite{LT2} without reducing to discrete model sums or appealing to generalized restricted weak-type interpolation. \end{abstract}
\maketitle

 \section{Introduction} 

  We are concerned with   the \emph{continuous wave packet transform} of   $f:\R \to \mathbb C$
  \begin{equation} 
\label{defFphi}
  F_\phi(f)(u,t,\eta)=  f* \phi_{t,\eta} (u), \qquad (u,t,\eta) \in \mathcal Z:= \R \times (0,\infty)\times \R\end{equation}
  where, for $t>0$ and $\eta\in \R$,
  $$
  \phi_{t,\eta}= \mathrm{Mod}_{\eta}\mathrm{Dil}_t^1 \phi, \qquad 
  \phi_{t,\eta}(x):= \e^{i\eta x}\textstyle\frac{1}{t}  \phi\left(\textstyle \frac{x}{t}\right)
  $$
is the \emph{wave packet} at scale $t$ and frequency $\eta$, and $\phi$ is a  nondegenerate real valued Schwartz function with compact frequency support.   The coefficients $ F_\phi(f)$, whose arguments  parametrize  the group of  symmetries of the class of modulation invariant singular integrals, provide an efficient description of the action of operators from this class. However, due to the overdetermination of the family $\{\phi_{t,\eta}: t\in (0,\infty),\, \eta \in \R\}$,   an effective control of the coefficient norms of $F_\phi(f)$ in terms of the norms of $f$   cannot be obtained via immediate orthogonality considerations, unlike the case of the \emph{wavelet transform} of $f$ where the modulation parameter  $\eta$ is fixed.

 A fundamental modulation invariant singular integral is 
the  (family of) trilinear form(s)  on  Schwartz functions  
 \[
\Lambda_{\vec\beta} (f_1,f_2,f_3)  = \int_\R \mathrm{p.v.} \int_{\R} f_1(x-\beta_ 1 t) f_2(x-\beta_2 t) f_3 (x-\beta_3 t) \, \frac{\d t}{t}\; \d x \qquad 
\] parametrized by unit vectors 
 $\vec \beta=(\beta_1,\beta_2,\beta_3) \in \R^3 $, perpendicular to $\vec 1=(1,1,1)$ and nondegenerate in the sense that
 \begin{equation}
\label{sepa}
\delta_0= \inf_{j\neq k} |\beta_j-\beta_k|>0.
\end{equation}
The adjoint bilinear  operators $T_{\vec \beta}$ to $\Lambda_{\vec \beta}$  are the  \emph{bilinear Hilbert transforms}, see for instance \cite{LT1,LT2} and the monograph \cite{ThWp}.  As noted in \cite{DoThiele15},    for a suitable choice of $\supp \widehat\phi$, the form  $ \Lambda_{\vec\beta}$ is   a nontrivial linear combination of the integral of the pointwise product   with a trilinear form involving the wave packet transforms:
\begin{equation}
\label{Vbeta}
\begin{split} &
\mathsf{V}_{\vec \beta} (f_1,f_2,f_3):= \int_{\mathcal Z} \left(\prod_{j=1}^3 G_j(u,t,\eta)\right) \, \d u \d t \d \eta, \\ & G_j(u,t,\eta):=F_{\phi}(f_j)(u,t,\alpha_j \eta + \beta_j t^{-1}) \end{split}
 \end{equation}
 where $\vec \alpha=(\alpha_1,\alpha_2,\alpha_3)$ is a unit vector orthogonal to $\vec \beta$ and $(1,1,1)$.
Therefore, H\"older-type estimates for $\Lambda_{\vec\beta}$ are an immediate consequence of the  corresponding  bounds   for $\mathsf{V}_{\vec \beta}$. Throughout the article, we refer to exponents triples $(p_1,p_2,p_3)$ with
\[
-\infty<p_1,p_2,p_3\leq \infty, \qquad \sum_{j=1}^3 {\textstyle \frac{1}{p_j}}=1
\]
 as H\"older triples of exponents.
The article \cite{DoThiele15} proves H\"older-type $L^{p_1}\times L^{p_2}\times L^{p_3}$-bounds for the form $\mathsf{V}_{\vec \beta}$
 in the \emph{local $L^2$} range $2<p_1,p_2,p_3<\infty$. These estimates recover    $L^p$ bounds for the bilinear Hilbert transform of  \cite{LT1} in the same restricted range, bypassing the discretization procedures which are ubiquitous in  the analysis of modulation invariant singular integrals; see \cite{DPTh2014,LT1,LT2,MTT}, the monographs \cite{MuscSchlII,ThWp} and references therein.
 
  The proof of \cite{DoThiele15} is articulated in two  distinct and complementary steps.
The first one is the application of an \emph{outer H\"older  inequality} (see Lemma \ref{lemmaredux})  to bound the trilinear integral in \eqref{Vbeta} by the product of certain \emph{outer $L^{p_j}$-norms} of each $G_j$, 
whose definition we postpone to Section \ref{sectents}. Here, we remark that this step does not rely in any way on  $G_j$ being  the wave packet transform of $f_j$. 
On the contrary, the nature of the $G_j$ is fundamental in the second and final step, which is the proof of an inequality of the type
\begin{equation}
\label{embintro}
\|G_j\|_{L^{p_j}(\mathcal Z, \sigma, \mathsf{s})} \leq C_{\vec\beta,p_j} \|f_j\|_{p_j},\qquad 2<p_j\leq\infty
\end{equation}
where the norm appearing on the left hand side is the same outer $L^{p_j}$-norm coming into play in the outer H\"older inequality. 
Estimates of this type are referred to as \emph{Carleson embedding theorems}. The motivation is that the $p_j=\infty$ case of this inequality is essentially the same as 
$$
|F_{\phi}(f_j)(u,t,\eta)^2| \frac{\d u\d t}{t} 
$$
being a Carleson measure on $Z=\R\times (0,\infty) $ when $f_j\in L^\infty$, uniformly in $\eta \in \R.$ 

  Carleson embeddings of the type \eqref{embintro} do not hold, at least in this form, for    $1<p_j<2$.  The main purpose of this article is to develop  a suitable extension of  \eqref{embintro}  outside of the local $L^2$ range treated in \cite{DoThiele15},  answering one of the questions posed in that article. A loose description of our main result, which is precisely  stated in Theorem \ref{bademb}, Section \ref{sectents}, is that for all $1<p<2$
 \[
\|\widetilde G_j\|_{L^{q}(\mathcal Z, \sigma, \mathsf{s})} \leq C_{\vec\beta,p,q}\|f_j\|_{p},\qquad p'<q\leq\infty
\]
where $\widetilde G_j$ is the restriction of  $ G_j$ outside a suitable exceptional subset of $\mathcal Z$ depending only on the maximal $p$-averages  of $f_j$.  The proof     involves a multi-frequency Cal\-de\-r\'on-Zygmund lemma which is an evolution of the approaches of \cite{DPTh2014,NOT}. 

Via a simple modification of the outer H\"older inequality argument, the Carleson embeddings of Theorem \ref{bademb}  yield     estimates for    $\mathsf{V}_{\vec \beta}(f_1,f_2,f_3)$, and thus for $\Lambda_{\vec \beta}(f_1,f_2,f_3)$   when $f_3$ is a suitably restricted subindicator, and $f_1,f_2$ are unrestricted functions, corresponding  to a \emph{direct proof} of the weak-type estimate
\begin{equation} \label{estimatell}
T_\beta: L^{p_1}(\R) \times L^{p_2}(\R) \to L^{\frac{p_1p_2}{p_1+p_2},\infty}(\R)
\end{equation}
for the adjoint(s) to $\Lambda_\beta$. This 
is an improvement from the generalized restricted weak-type paradigm of \cite{MTT}, where all input  functions in $\Lambda_{\vec \beta}$ are taken to be subindicator, resulting in the (weaker)   version of \eqref{estimatell} for subindicators. 
Our approach thus recovers, via bilinear weak-type interpolation rather than generalized restricted weak-type interpolation of e.g.\ \cite{MTT} or \cite[Chapter 3]{ThWp}, the strong-type bound for the bilinear Hilbert transforms $T_\beta$ in the full range  
\begin{equation}
\label{range} \textstyle  1<p_1,p_2\leq \infty, \qquad \frac23 <  \frac{p_1 p_2}{p_1+p_2}<\infty.
\end{equation}
of  expected exponents first established in \cite{LT2}.
 A precise statement and proof can be found in Theorem \ref{modelsumprop}, Section \ref{propsec}.

  We expect  Theorem \ref{bademb}, or suitable strengthenings thereof, to  impact on    other   related questions.  The first concerns uniform bounds for the family $\Lambda_{\vec\beta}$ as the degeneracy parameter $\delta_0$ from \eqref{sepa} approaches zero. In spite of the significant progress made  in the articles \cite{GrLi2004,LiX2006,ThUe}, it is still  open whether uniform bounds holds in the full expected range of exponents $p_1,p_2$ common  to both the generic and degenerate case.  The article \cite{OT} proves the full range of uniform bounds for a discrete model of $\Lambda_{\vec \beta}$, relying on a multi-frequency decomposition argument in the simple discrete setting, where the bad part has trivial contribution.
We do not explicitly track the  dependence on $\delta$ in our estimates. However, the argument for Theorem \ref{bademb} is currently ill-conditioned with respect to this parameter, and a more refined treatment is needed for uniform bounds.

The recent article \cite{DPOu} develops an outer $L^p$ theory for functions taking values in UMD Banach spaces, which is then used for the proof of  multilinear multiplier theorems in the  UMD-valued setting.
In view of possible applications to Banach-valued multilinear singular integrals with modulation invariance, it is reasonable to ask whether a form of Theorem \ref{bademb}  extends to    UMD Banach space valued functions (or interpolation UMD, as in \cite{HytLac2013}). In an appendix, we sketch how to obtain such an extension for Hilbert space valued functions.

 \subsection*{Notation} 
 We normalize the Fourier transform of   a Schwartz function $f\in \mathcal S(\R)$ as
$$
\widehat f(\xi) = \int_{\R} f(x) \e^{-i\xi x}\, \d x.
$$
Unless otherwise mentioned, the constants denoted by the letter  $C$, as well as those implied by the almost inequality signs, are meant to be absolute. Their   value may vary between occurrences without explicit mention. With $C_{a_1,\ldots, a_n}$, we denote a generic constant depending on the parameters $a_1,\ldots, a_n$ which may also differ at each occurrence.

We  write $c(I)$ for the center of the interval $I\subset \R$ and write $|I|$ for its length.    For $\kappa>0$, we denote by $\kappa I$ the interval centered at $c(I)$ with $|\kappa I|=\kappa I$.
 We say that $\cic{J}$ is  a finitely overlapping collection of intervals if  for all intervals $I \subset \R$
\begin{equation}
\label{finitover}
\sum_{J \subset I} |J| \leq C |I|.
\end{equation}
We will write
$$
\mathrm{M}_p  f(x) =\sup_{\substack{I \subset \R \textrm{ interval}\\ x \in I}}\left( \int_I |f(y)|^p \frac{\d y}{|I|}\right)^{\frac1p}
$$
for the $p$-Hardy-Littlewood maximal operator, for $1\leq p<\infty$ and omit the subscript when $p=1$.
If $\phi \in \mathcal S(\R)$, we will write $\mathcal{S}_{N}(\phi)$ for the Schwartz norms of $\phi$ of order $N$. 
\subsection*{Acknowledgments} 
We   are indebted to   Christoph Thiele for suggesting the problem of formulating an outer measure embedding theorem for the wave packet transform outside local $L^2$.  We are also  grateful to  the anonymous referee for his generous work which greatly helped with improving the clarity of the presentation.
\section{Main results}  \label{sectents}
 We begin by recalling from \cite{DoThiele15} the fundamental definitions  concerning the outer $L^p$ spaces appearing in the statement  of  our main Carleson embedding theorem.\subsection{Outer $L^p$ spaces from tents} 
Let $Z$ be a metric space and $\Tt$ be a subcollection of the   Borel subsets of $Z$. Following the terminology of \cite[Definition 2.1]{DoThiele15}, we refer to any function $\sigma:\Tt\to [0,\infty]$ as a \emph{premeasure}. The \emph{outer measure}    $\mu$ generated by $\sigma$ is
\[
Z\supset A \mapsto \mu(A):=\inf_{\Tt'}\sum_{\T\in \Tt'} \sigma(\T) 
\]
where the infimum is taken over all \emph{countable} subcollections $\Tt'$ of  $\Tt$ which cover the set $A$. 
Let $\mathcal{B}(Z)$ denote  complex, Borel-measurable functions on $Z$. A \emph{size} is  a map
\[
\mathsf{s}:\, \mathcal{B}(Z )\rightarrow [0,\infty]^{\Tt}
\]
that satisfies for any $F,G\in\mathcal{B}(Z )$ and $\T\in\Tt$  
\begin{align}
& \nonumber   |F|\leq |G| \implies  \mathsf{s}(F)(\T)\leq\mathsf{s}(G)(\T),\\
&\nonumber \mathsf{s}(\lambda F)(\T)=|\lambda|\mathsf{s}(F)(\T),\quad\forall \lambda\in\mathbb{C},\\
& \label{sizesubadd}  \mathsf{s}(F+G)(\T)\leq c_{\mathsf{s}}\mathsf{s}(F)(\T)+c_{\mathsf{s}}\mathsf{s}(F)(\T),\quad\text{for some fixed $c_{\mathsf{s}}\geq 1$}.
\end{align}  
We  refer to the triple $(Z, \sigma,\mathsf{s})$ as an \emph{outer measure space}. Notice that  the generating collection  $\Tt$ is implicitly encoded in the notation  as the domain of $\sigma$.   
We now introduce \emph{outer $L^p$ spaces}. Let us first define the \emph{outer (essential) supremum} of $F \in \mathcal{B}(Z)$ over the Borel-measurable subset $E\subset  Z$ as
\[
\underset{E}{\text{outsup}\,}\mathsf{s}(F):=\sup_{\T\in\Tt}\mathsf{s}(F\cic{1}_{ E})(\T).
\]
Then, for each $\lambda>0$, the \emph{super level measure} is defined by
$$
\mu(\mathsf{s}(F)>\lambda):= \inf \left\{ \mu(E): E\subset Z \textrm{ Borel}, \, \underset{Z\setminus E}{\text{outsup}\,}\mathsf{s}(F) \leq \lambda\right\}. 
$$  
Finally, for $0<p<\infty$ and   $f\in \mathcal{B}(Z )$  we set 
\begin{align*}
&\|F\|_{L^p(Z,\sigma, \mathsf{s})}:=\left(\int_0^\infty p\lambda^{p-1}\mu(\mathsf{s}(F)>\lambda)\,\d\lambda\right)^{\frac1p},\qquad 
\|F\|_{L^{p,\infty}(Z,\sigma,\mathsf{s})}:= \sup_{\lambda>0} \lambda\big(\mu(\mathsf{s}(F)>\lambda)\big)^{\frac1p},
\end{align*}
and for $p=\infty$
\[
\|F\|_{L^{\infty,\infty}(Z,\sigma, \mathsf{s})}=\|F\|_{L^\infty(Z,\sigma, \mathsf{s})}:=\underset{Z}{\text{outsup}\,}\mathsf{s}(F)=\sup_{
\T\in\Tt}\mathsf{s}(F)(\T).
\]
We are concerned with two interrelated concrete examples of outer $L^p$ spaces.

\subsubsection{Tents} Our base metric space will be the upper half-space  $Z=\R\times(0,\infty)$. Let $I\subset \R$ be an interval centered at $c(I)$.  The \emph{tent} over $I$ is defined as
\begin{equation}
\label{tentdef}
\T(I)= \{(u,t) \in Z: 0<t<|I|, |u-c(I)| <|I|-t\}.
\end{equation}
An outer measure   $\mu$ on $Z$, with $\{\T(I):I \subset \R\}$ as the  generating collection is then defined via the premeasure $\sigma(\T(I))=|I|$. 
For $F\in \mathcal{B}(Z)$, we define the \emph{sizes}
\begin{equation}
\label{size0} 
\begin{split} &
\mathsf{s}_q(F) (\T(I)):= \left(\frac{1}{|I|} \int_{T(I)} |F(u,t)|^q \, \frac{\d u \d t}{t} \right)^{\frac1q}, \qquad 0<q<\infty, \\ &\mathsf{s}_\infty(F) (\T(I)):=\sup_{(u,t) \in\T(I)} |F(u,t)|,
\end{split}
\end{equation}
and denote by
$
L^p(Z, \sigma, \mathsf{s}_q), \; L^{p,\infty}(Z, \sigma, \mathsf{s}_q)
$
the corresponding strong and weak outer $L^p$ space. Note that the case $p=\infty$ fits into the scale of \emph{tent spaces} originating in \cite{CMS}.

\subsubsection{Generalized tents} \label{ssgt} The outer measure space of generalized tents is  based on the upper $3$-half space $\mathcal Z= Z\times \R.$ Let $\cic{\alpha}$ be a set of parameters obeying the restrictions 
 \begin{equation}
 \cic{\alpha}=(\alpha,\beta,\delta), \qquad 
\label{paramt}
0<\alpha\leq 1, \quad 0\leq |\beta|< \textstyle \frac34, \quad 2^{-16} \delta_0\leq  \delta \leq 2^{-8} \delta_0,
\end{equation}
where $\delta_0$ is a small fixed constant. In fact, throughout the article we will work with $\delta_0$ being the same as in \eqref{sepa}.
Let $I\subset \R$ be an interval, $\T(I)$ be as in \eqref{tentdef}  and $\xi \in \R$.  The corresponding \emph{generalized  tent}  with parameters $\cic{\alpha}$, and its \emph{lacunary part},  are defined by 
\[
\begin{split}&
\T_{\cic \alpha}(I,\xi):= \{(u,t,\eta) \in \mathcal Z: (u,t) \in \T_{\cic \alpha}(I), |\alpha(\eta-\xi)+\beta t^{-1}| \leq t^{-1}\}, \\ & \T^\ell_{\cic \alpha}(I,\xi):= \{(u,t,\eta) \in \T_{\cic \alpha}(I,\xi): t|\xi-\eta|>\delta\}.
\end{split}
\]
Outer measures $\nu_{\cic{\alpha}}$ on    $ \mathcal Z$, with $\{\T_{\cic \alpha}(I,\xi):I \subset \R, \xi \in \R\}$ as the generating collection are then defined via the premeasure $\sigma_{\cic{\alpha}}(\T_{\cic \alpha}(I,\xi))=|I|$. 
For $F\in\mathcal{B}(\mathcal Z)$,  we define the   sizes
\begin{equation}
\label{size} 
\begin{split}&
\mathsf{s}_2(F) (\T(I,\xi)):= \left(\frac{1}{|I|} \int_{\T^\ell_{\cic \alpha}(I,\xi)} |F(u,t,\eta)|^2 \,  \d u \d t\d \eta  \right)^{\frac12},  \\ &\mathsf{s}_\infty(F) (\T(I,\xi)):=\sup_{(u,t,\eta) \in\T_{\cic \alpha}(I,\xi)} |F(u,t,\eta)|, \\ &  \mathsf{s}:=\mathsf{s}_2+ \mathsf{s}_\infty.\end{split}
\end{equation} 
We denote by
$
L^p(\mathcal Z, \sigma_{\cic \alpha}, \mathsf{s}), \; L^{p,\infty}(\mathcal Z, \sigma_{\cic \alpha}, \mathsf{s})
$
and similarly for the other sizes, the corresponding strong and weak outer $L^p$ spaces.

\subsection{A Carleson embedding below local $L^2$}

Let   $\phi\in \mathcal S(\R)$ with $\supp \widehat \phi \subset [-2^{-8}\delta , 2^8\delta]$, where $\delta$ refers to \eqref{paramt}. In \cite{DoThiele15}, it was proved that  the wave packet transform $F_\phi(f)$  defined in \eqref{defFphi} enjoys the local $L^2$ Carleson embeddings 
\begin{align}&\label{goodemb1} \|F_\phi (f) \|_{L^q(\mathcal Z, \sigma_{\cic{\alpha}},\mathsf{s})} \leq C_{\alpha,\phi,q}
\|f\|_q, \qquad 2<q\leq \infty, \\ & \label{goodemb2}
\|F_{\phi } (f)\|_{L^{2,\infty}( \mathcal Z, \sigma_{\cic{\alpha}},\mathsf{s} )} \leq C_{\alpha,\phi,2} \|f\|_2,  
\end{align}  with $C_{\alpha,\phi,q}=C_q\alpha^{-C} \mathcal{S}_C(\phi).$
Our main result is  a suitable extension of \eqref{goodemb1}    below local $L^2$.  
\begin{theorem} \label{bademb} Let $f\in \mathcal S( \R)$,  $\lambda>0$, $1<p<2$. Define 
 \begin{equation}  \label{maxfctbd} \begin{split}  &
\cic{I}_{f,\lambda,p}:= \textrm{\emph{maximal\ dyadic\ intervals }} I\textrm{\emph{ s.t. }}
 I\subset \left\{x\in\R: \mathrm{M}_p(\mathrm M f)(x)>{\lambda\|f\|_p}\right\},\\  & E_{f,\lambda,p}:=\bigcup_{I\in \cic{I}_{f,\lambda,p}}  { \T}(3I) \times \R  \subset \mathcal Z. \end{split}
 \end{equation}
 Then
 \begin{equation} \label{cet} \left\|F_{\phi } (f) \cic{1}_{\mathcal Z\setminus  E_{f,\lambda,p}}\right\|_{L^{q} (\mathcal Z, \sigma_{\cic{\alpha}},\mathsf{s})} \leq  C_{p,q,\alpha,\phi} \lambda^{1-\frac{p}{q}}{\|f\|_p}\qquad \forall p'<q\leq\infty,
\end{equation}
The constant $C_{p,q,\alpha,\phi}$ depends on $   p$, $q$, $\alpha$ (polynomially) and $\phi$ only.
\end{theorem}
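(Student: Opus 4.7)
The plan is to prove \eqref{cet} by real interpolation between two strong endpoint estimates: an outer $L^\infty$ bound at height $\lambda$ and an outer $L^{p'}$ bound at height $\lambda^{2-p}$. By homogeneity I normalize $\|f\|_p=1$, so that $E_{f,\lambda,p}$ sits above the bad set $\{\mathrm M_p(\mathrm M f)>\lambda\}$.

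\emph{Outer $L^\infty$ endpoint.} The goal is
\[
\sup_{I,\xi}\;\mathsf s\bigl(F_\phi(f)\cic{1}_{\mathcal Z\setminus E_{f,\lambda,p}}\bigr)(\T_{\cic\alpha}(I,\xi)) \;\lesssim\; \lambda.
\]
I would proceed via a \emph{multi-frequency Calder\'on--Zygmund decomposition} of $f\cic{1}_{5I}$ adapted to the tent's central frequency $\xi$, writing $f\cic{1}_{5I} = g + \sum_{J\in \cic J}b_J$ with $\cic J := \cic{I}_{f,\lambda,p}\cap 3I$, $\|g\|_\infty\lesssim \lambda$, and each $b_J$ supported on $J$ with vanishing moments against the modulated polynomials $e^{i\xi y} y^k$ to sufficient order $k$. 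The good part is handled by the $q=\infty$ case of the Carleson embedding \eqref{goodemb1}, which yields $\|F_\phi(g)\|_{L^\infty(\mathcal Z,\sigma_{\cic\alpha},\mathsf s)}\lesssim \lambda$ and hence controls both $\mathsf s_\infty$ and $\mathsf s_2$ on every tent. The bad pieces are killed by the combined effect of the lacunary constraint $t|\eta-\xi|>\delta$ (activating the cancellation from the vanishing moments) and the deletion of $\T(3J)\times\R$ (which enforces either $t\gtrsim |J|$ or $|u-c(J)|\gtrsim t$, triggering Schwartz decay of $\phi$). Summing over the disjoint $\cic J$ and adding a standard far-field bound for $f\cic{1}_{\R\setminus 5I}$ finishes the step.

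\emph{Outer $L^{p'}$ endpoint.} The goal is
\[
\bigl\|F_\phi(f)\cic{1}_{\mathcal Z\setminus E_{f,\lambda,p}}\bigr\|_{L^{p'}(\mathcal Z,\sigma_{\cic\alpha},\mathsf s)}\;\lesssim\;\lambda^{2-p}.
\]
Perform a single-frequency CZ decomposition $f = g + b$ at level $\lambda$ with stopping family $\cic{I}_{f,\lambda,p}$: $\|g\|_\infty\lesssim \lambda$ and $\|g\|_p\leq 1$, whence by Lyapunov interpolation $\|g\|_{p'}\lesssim \|g\|_\infty^{1-p/p'}\|g\|_p^{p/p'} \lesssim \lambda^{1-p/p'} = \lambda^{2-p}$. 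Since $p'>2$ for $1<p<2$, the Carleson embedding \eqref{goodemb1} applies at $q = p'$ and gives $\|F_\phi(g)\|_{L^{p'}}\lesssim \lambda^{2-p}$. The bad-part contribution on $\mathcal Z\setminus E_{f,\lambda,p}$ is negligible by the geometry-and-decay argument of the previous step.

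\emph{Interpolation and main obstacle.} Marcinkiewicz-type interpolation for outer $L^p$ spaces, as developed in \cite{DoThiele15}, between the $L^{p'}$ bound $\lambda^{2-p}$ and the $L^\infty$ bound $\lambda$, yields \eqref{cet} with constant $\lambda^{1-p/q}$ throughout $p'<q\leq\infty$. The decisive technical step is the multi-frequency CZ decomposition in the $L^\infty$ endpoint: the stopping time is $L^p$-based with $1<p<2$, refining the $L^1$-based classical CZ and the $L^2$-based multi-frequency versions of \cite{DPTh2014,NOT}, and the bad pieces must carry cancellation uniform over the entire lacunary band $|\eta-\xi|>\delta/t$ for arbitrary $\xi$. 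Preserving polynomial dependence on the parameter $\alpha$ from \eqref{paramt} through the decay estimates is a further subtlety.
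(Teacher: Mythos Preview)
Your proposal has a genuine gap at the $L^{p'}$ endpoint. The single-frequency Calder\'on--Zygmund decomposition $f=g+b$ produces a bad part $b=\sum_J b_J$ whose cancellation is tied to one fixed frequency. But the outer $L^{p'}$ norm requires control of sizes on \emph{all} generalized tents $\T_{\cic\alpha}(I,\xi)$, and for $\xi$ away from your chosen frequency the vanishing moments of $b_J$ buy nothing: the wave packet $\phi_{t,\eta}$ oscillates at frequency $\eta\approx\xi$, so integration by parts costs a factor of order $|\xi|t$ rather than $1$. You are thus left with only the $L^\infty$ size bound $\lesssim\lambda$ on $F_\phi(b)\cic{1}_{\mathcal Z\setminus E}$, and since $b\notin L^2$ there is no route to an outer $L^{p'}$ bound. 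Your claim that the bad part is ``negligible by the geometry-and-decay argument of the previous step'' does not transfer: in your $L^\infty$ step the decomposition was tailored to the frequency $\xi$ of a single fixed tent, whereas here it must be one global decomposition serving all $\xi$ at once. This is exactly the obstruction that forces the theorem to exclude $q=p'$.

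The paper's argument is structurally different and never attempts an $L^{p'}$ endpoint. After the easy $L^\infty$ bound (your first step, essentially Lemma~\ref{lemmabademb1}), it fixes a level $2^{-k/q}$ and runs a selection algorithm (Lemma~\ref{lemmasele}) to extract strongly disjoint families of tents $\Tt$ on which the size is large; the finitely many frequencies $\{\xi_\T:\T\in\Tt\}$ are thus \emph{intrinsic to $f$}. Only then is a multi-frequency Calder\'on--Zygmund decomposition performed (Lemma~\ref{lemmaCZ}), adapted to precisely those selected frequencies. The good part $g^{(J)}$ lands in $L^2$ with norm governed by the counting function $\sum_\T\cic{1}_{I_\T}$, and the weak-$L^2$ embedding \eqref{goodemb2}, via the strongly disjoint structure, yields a self-improving inequality for $\sum_\T|I_\T|$. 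The mechanism you are missing is that the CZ frequencies must be dictated by the selection, not chosen a priori; this is what allows the bad part to be small on \emph{every} selected tent simultaneously.
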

We use  Theorem \ref{bademb}, coupled with the outer H\"older inequality,   to extend the $L^p$ estimates for $\mathsf{V}_{\vec \beta}$ to exponents outside local $L^2$.
\begin{theorem} \label{modelsumprop}Let $(p_1,p_2,p_3)$ be a H\"older tuple with $p_1,p_2$ within the range \eqref{range}.
 For all  $A\subset \R$ of finite measure, and  all   $f_1,f_2\in \mathcal S(\R)$, there exists   $\widetilde A\subset A$ with $|A| \leq 2 |\widetilde A|$ and 
\[
 \big|\mathsf{V}_{\vec \beta} (f_1,f_2,f_3)\big| \leq C_{\vec\beta, p_1,p_2, } \|f_1\|_{p_1} \|f_2\|_{p_2} |A|^{ \frac{1}{p_3} } \qquad \forall f_3 \in \mathcal S(\R),\, |f_3| \leq \cic{1}_{\widetilde A}.
\]
\end{theorem}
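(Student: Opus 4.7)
The plan is a restricted weak-type argument combining Theorem \ref{bademb}, the outer H\"older inequality of \cite{DoThiele15}, and the classical embeddings \eqref{goodemb1}--\eqref{goodemb2}. The exceptional set $\widetilde A$ will be built from Calder\'on--Zygmund-type stopping conditions on $f_1$ and $f_2$.

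\emph{Construction of the exceptional set.} For each $j\in\{1,2\}$ with $p_j<2$ I would set $\lambda_j=C|A|^{-1/p_j}$ with $C$ absolute and large. The weak $(p_j,p_j)$-bound of $\mathrm M_{p_j}\circ \mathrm M$ then gives $|E|\leq |A|/2$ for
\[
E:=\bigcup_{j:\,p_j<2}\;\bigcup_{I\in\cic I_{f_j,\lambda_j,p_j}}\,6I,
\]
and $\widetilde A:=A\setminus E$ has $|A|\leq 2|\widetilde A|$. Let $\widetilde E:=\bigcup_{j:\,p_j<2} E_{f_j,\lambda_j,p_j}\subset\mathcal Z$ denote the corresponding outer bad set from \eqref{maxfctbd}.

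\emph{Main term.} For $|f_3|\leq\cic 1_{\widetilde A}$ I would split $\mathsf V_{\vec\beta}(f_1,f_2,f_3)=\mathrm I+\mathrm{II}$, where $\mathrm I$ and $\mathrm{II}$ are the parts of the integral over $\mathcal Z\setminus\widetilde E$ and $\widetilde E$ respectively. On the good region I would apply the outer H\"older inequality with H\"older exponents $q_1,q_2,q_3$, $\sum_j 1/q_j=1$, chosen so that $q_j>p_j'$ whenever $p_j<2$, $q_j=p_j$ otherwise, and $q_3>2$---a configuration that the range \eqref{range} precisely permits. Bounding the first two factors via Theorem \ref{bademb} resp.\ \eqref{goodemb1}, and the third via \eqref{goodemb1} together with $\|f_3\|_{q_3}\leq |A|^{1/q_3}$, the $|A|$-exponents telescope using $\sum_j 1/q_j=\sum_j 1/p_j=1$ to yield $|\mathrm I|\lesssim |A|^{1/p_3}\|f_1\|_{p_1}\|f_2\|_{p_2}$.

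\emph{Residual term and main obstacle.} Since $\supp f_3\subset\widetilde A$ is disjoint from $\bigcup 6I$, the spatial shadow of $\widetilde E$, for each $(u,t)\in\T(3I)$ one has $\dist(u,\supp f_3)>t$; the Schwartz decay of $\phi$ thus gives rapid decay of $|G_3(u,t,\eta)|$ on $\T(3I)\times\R$. A tent-by-tent estimate, in which the local sizes of $G_1,G_2$ on the generalized tents $\T_{\cic\alpha}(3I,\xi)$ are controlled through the stopping criterion that placed $I$ in $\cic I_{f_j,\lambda_j,p_j}$, would then recover $|\mathrm{II}|\lesssim |A|^{1/p_3}\|f_1\|_{p_1}\|f_2\|_{p_2}$. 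The hard step is precisely this: Theorem \ref{bademb} is unavailable on $\widetilde E$, so the smallness of $G_3$ arising from the separation of $\supp f_3$ from the shadow of $\widetilde E$ must be delicately traded against the potentially large sizes of $G_1,G_2$ on each bad tent, through a local maximal-function bound tied to the failure of the stopping time just outside each $I$.
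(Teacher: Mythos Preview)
Your construction of $\widetilde A$ and the treatment of the main term $\mathrm I$ are correct and match the paper's approach. The gap is in $\mathrm{II}$, and it is genuine rather than merely technical.

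The proposed ``local maximal-function bound'' for $G_1,G_2$ on a bad tent $\T(3I,\xi)$ does not exist when $p_j<2$. The stopping condition on $I\in\cic I_{f_j,\lambda_j,p_j}$ controls $p_j$-averages of $f_j$ near $I$, and this does govern $\mathsf s_\infty(G_j)$ on the tent (this is essentially Lemma \ref{lemmabademb1}). But $\mathsf s_2(G_j)(\T(3I,\xi))$ is an $L^2$-type quantity over all scales below $|I|$, and for $p_j<2$ it can be arbitrarily large regardless of the $p_j$-average. So there is nothing finite to trade against the decay of $G_3$; and that decay is itself only a fixed power of the separation ratio $6$, not a summable gain.

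The paper resolves this by a dyadic layering of the bad set rather than a single cut. One defines nested exceptional sets $E_k$ from the stopping families $\cic I_{f_j,2^{10+k}C_0,p_j}$ and writes $\widetilde E=\bigcup_k(E_k\setminus E_{k+1})$. On each annulus $E_k\setminus E_{k+1}$ one \emph{can} apply Theorem \ref{bademb} to $G_1,G_2$, because $E_k\setminus E_{k+1}\subset\mathcal Z\setminus E_{k+1}$; the cost is $\|G_j\cic 1_{\mathcal Z\setminus E_{k+1}}\|_{L^{q_j}}\lesssim 2^k$. Meanwhile, the intervals in $\cic I_k$ satisfy $\supp f_3\cap 3\cdot 2^k I=\emptyset$, so Proposition \ref{goodemb} (the localized embedding you did not invoke) gives $\|G_3\cic 1_{E_k}\|_{L^{q_3}}\lesssim 2^{-Rk}$ for any $R$. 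The product $2^{2k}\cdot 2^{-Rk}$ is summable. Thus the missing idea is precisely this layer-by-layer reapplication of Theorem \ref{bademb} at growing thresholds, paired with the localized embedding for $f_3$, in place of a single tent-by-tent estimate.
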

Relying on the observation above \eqref{Vbeta}, the same estimate of Theorem \ref{modelsumprop} holds for the forms $\Lambda_{\vec \beta}$. Equivalently, the adjoints $T_{\vec \beta}$ to $\Lambda_{\vec \beta}$ satisfy \eqref{estimatell} for all $p_1,p_2$ within the range \eqref{range}.
Finally, since the range \eqref{range} is open, multilinear interpolation upgrades \eqref{estimatell} to the strong-type estimate.
\begin{remark}
\label{sketchpf} The proof of Theorem \ref{bademb} contains a modulation invariant version of the Cal\-der\'on-Zygmund decomposition. Loosely speaking, the argument proceeds by selecting the generalized tents on which $F_\phi(f)$ has large size, and decomposing $f$ into the sum of a \emph{good part} $g\in L^2$ and a \emph{bad part} $b$. The bad part is devised to have moments of rather high order vanishing when integrated against the selected frequencies. This extra cancellation renders the  size of  $F_\phi(b)$     very small, forcing $F_\phi(g)$ to have  large size. At this point, since $g\in L^2$, we get to use the weak-$L^2$ version of the Carleson embedding \eqref{goodemb2}, or rather its proof. Unlike \cite{DPTh2014,NOT} and the similar projection argument used in the Walsh context (see \cite{DP2} and references therein), the frequencies on which $f$ is projected are intrinsic to $f$ itself. 
\end{remark}

 \subsection{Proofs and structure of the article} In the upcoming Section \ref{propsec}, we prove Theorem \ref{modelsumprop}. The main steps of the proof of Theorem \ref{bademb}   are postponed to Section \ref{secgentents}. The proof relies on the material of Section \ref{secembzero}, which contains somewhat refined Carleson embedding theorems for the wave packet transform of $f$ at a fixed frequency $\xi$.
 The preliminary Sections \ref{secoutlp} and \ref{adapsec} are respectively dedicated to some properties of outer $L^p$ spaces  and to   some results on ad\-apted systems of wave packets, which are   used in  Section \ref{secembzero}.

\section{Estimating the trilinear form $\mathsf{V}_{\vec \beta}$ via Carleson embeddings} \label{propsec} 
This section is dedicated to the proof of Theorem \ref{modelsumprop}. Before the main argument, we state two preliminary results.
The following lemma, which  summarizes the discussion in \cite[p.\ 46]{DoThiele15}, is the above mentioned H\"older inequality involving outer $L^p$ norms on the upper 3-half space.  \begin{lemma} \label{lemmaredux} Let   $\{\vec \alpha,  \vec{\beta}\}$  be an orthonormal basis of $(1,1,1)^\perp$, $\delta_0$ be as in \eqref{sepa}, 
and $\cic{\alpha}_j:=(\alpha_j,\beta_j,2^{-9}\delta_0),$  $j=1,2,3.$ 
 Then $$
\int_{\mathcal Z}\left( \prod_{j=1}^3  |F_j(u,t,\alpha_j \eta+ \beta_j t^{-1})| \right)\, \d u\d t \d \eta \leq C_{\delta_0} \prod_{j=1}^3  \|F_j\|_{L^{p_j} (\mathcal {Z} , \sigma_{\cic{\alpha}_j}, \mathsf s)}
$$
for all   $F_j\in \mathcal B(\mathcal Z)$, $j=1,2,3$,  and  H\"older tuples $(p_1,p_2,p_3)$  with $1\leq p_1,p_2,p_3\leq \infty$. 
\end{lemma}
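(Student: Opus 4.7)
My plan is to reduce Lemma \ref{lemmaredux} to a trilinear single-tent inequality, then invoke the abstract outer H\"older machinery of \cite{DoThiele15}. The target $\int_{\mathcal Z}\prod_j |F_j\circ\Phi_j|\,\d u\d t\d\eta$, with $\Phi_j(u,t,\eta)=(u,t,\alpha_j\eta+\beta_j t^{-1})$, is an unweighted outer $L^1$ quantity on the base; the three sizes on the right hand side are measured against three different outer measure spaces $(\mathcal Z,\sigma_{\cic\alpha_j},\mathsf s)$ on the same set, so the version of outer H\"older that fits is the hybrid form for multiple outer structures glued by the common integration against Lebesgue measure.

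\emph{Single-tent estimate.} Fix a base interval $I$ and $\xi\in\R$, set $\xi_j:=\alpha_j\xi+\beta_j|I|^{-1}$, and localize to the prism $R(I,\xi):=\T(I)\times\{|\eta-\xi|\leq|I|^{-1}\}$. A direct change of variables using the normalization $\cic\alpha_j=(\alpha_j,\beta_j,2^{-9}\delta_0)$ shows $\Phi_j(R(I,\xi))\subset \T_{\cic\alpha_j}(I,\xi_j)$. I then partition $R(I,\xi)$ according to which subset $S\subset\{1,2,3\}$ of indices is \emph{non-lacunary at its image}, i.e.\ $t|\alpha_j\eta+\beta_j t^{-1}-\xi_j|\leq\delta$ precisely for $j\in S$. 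The crucial geometric input is that the three vectors $v_j=(\alpha_j,\beta_j)\in\R^2$ form a tight frame $\sum_j v_j v_j^\top = \mathrm{Id}_{\R^2}$, a direct consequence of $\vec\alpha,\vec\beta$ being an orthonormal basis of $(1,1,1)^\perp$. This frame identity pins the set $S=\{1,2,3\}$ to a slab of diameter $\lesssim\delta_0$ in the $(t(\eta-\xi),1-t|I|^{-1})$-plane, while analogous (weaker) constraints govern the subsets of cardinality $2$. On each piece of the partition, the non-lacunary factors are dominated pointwise by $\mathsf s_\infty(F_j)(\T_{\cic\alpha_j}(I,\xi_j))$ and pulled out of the integral, while the remaining lacunary factors are paired via Cauchy--Schwarz against $\mathsf s_2(F_j)(\T_{\cic\alpha_j}(I,\xi_j))$, the volume loss being absorbed into the prefactor $|I|$. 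Summing the four cases yields
\[
\int_{R(I,\xi)}\textstyle\prod_j|F_j\circ\Phi_j|\,\d u\d t\d\eta \;\leq\; C_{\delta_0}\,|I|\prod_j \mathsf s(F_j)\bigl(\T_{\cic\alpha_j}(I,\xi_j)\bigr).
\]

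\emph{Globalization.} Covering $\mathcal Z$ by a finitely overlapping family of prisms $R(I,\xi)$ matched to the premeasures $\sigma_{\cic\alpha_j}(\T_{\cic\alpha_j}(I,\xi_j))=|I|$, and iterating the abstract outer H\"older inequality of \cite[Section 3]{DoThiele15} on each factor with exponents $(p_1,p_2,p_3)$ satisfying $\sum_j p_j^{-1}=1$, upgrades the single-prism bound to the claimed global estimate. Edge cases with some $p_j=\infty$ require no modification beyond replacing the super-level-set step for that factor with a direct outer $L^\infty$ estimate. \emph{The main obstacle} is the single-tent step: the four cases (none, one, two, three factors non-lacunary) must be book-kept so that every missing $\mathsf s_2$-average is compensated by the geometric smallness from the tight frame identity, and the constant $\delta=2^{-9}\delta_0$ is calibrated precisely to make the worst case (three simultaneous non-lacunary factors) fit inside a region small enough to be absorbed by a single $\mathsf s_\infty$ bound times $|I|$.
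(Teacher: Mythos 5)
The paper does not actually prove this lemma: it is quoted as a summary of the discussion in \cite[p.\ 46]{DoThiele15}, so your proposal must be measured against that argument, whose skeleton (a single generating-set estimate combined with the abstract outer H\"older and $L^1$-domination machinery) you do reproduce. The genuine gap is in the single-set estimate, exactly the step you flag as the main obstacle. In the cited argument the cases in which two or three factors are simultaneously non-lacunary simply do not occur: non-lacunarity of the $j$-th factor, $t\,|\alpha_j\eta+\beta_j t^{-1}-\xi_j|\leq\delta$, confines $t(\eta-\xi)$ to a window of width $O(\delta)$ around a value determined by $(\alpha_j,\beta_j)$, and for $j\neq k$ these values are separated by a quantity controlled from below by the nondegeneracy (for an orthonormal pair $\vec\alpha,\vec\beta\perp(1,1,1)$ one has $|\alpha_j\beta_k-\alpha_k\beta_j|=3^{-1/2}$), which is $\gg\delta=2^{-9}\delta_0$; this is precisely what the calibration in \eqref{paramt} buys. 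Hence at every point at least two factors lie in the lacunary parts, and the estimate is a clean $L^\infty\times L^2\times L^2$ pairing of one $\mathsf s_\infty$ with two $\mathsf s_2$'s. Your proposal instead treats $|S|=2,3$ as genuinely occurring and controls them by ``geometric smallness'' from the tight frame identity; that mechanism does not close. After sup-bounding every non-lacunary factor you must pay the Lebesgue measure of the corresponding region, and on a set whose $\eta$-extent at height $t$ is $\sim\delta/t$ over $(u,t)\in\T(I)$ this measure is $2\delta\int_{\T(I)}t^{-1}\,\d u\,\d t=\infty$: simultaneous non-lacunarity constrains $t(\eta-\xi)$, not $1-t|I|^{-1}$, so nothing pins $t\sim|I|$ and the claimed ``slab'' confinement is unsupported. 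The missing idea is the at-most-one-non-lacunary-factor disjointness, which is also the reason $\mathsf s_2$ is computed only over $\T^\ell$ while $\mathsf s_\infty$ is not.

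A second, related problem sits in your globalization. With your prisms $R(I,\xi)=\T(I)\times\{|\eta-\xi|\leq|I|^{-1}\}$ the single-prism bound is actually trivial ($|R(I,\xi)|\sim|I|$, so three $\mathsf s_\infty$'s suffice), but the abstract outer H\"older inequality of \cite{DoThiele15} cannot then be invoked: it requires the base outer measure to be dominated by each $\nu_{\cic\alpha_j}$, and a single generalized tent $\T_{\cic\alpha_j}(I,\xi)$, whose $\eta$-extent at height $t$ is $\sim t^{-1}$, costs an unbounded multiple of $|I|$ to cover by such prisms (a batch of total premeasure $\sim|I|$ for each dyadic scale of $t$). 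Summing single-prism bounds over a finitely overlapping cover is not a substitute, since that produces products of sizes, not products of outer $L^{p_j}$ norms. The consistent choice is base sets with $\eta$-extent $\sim t^{-1}$ (so that the three outer measures are mutually comparable by \eqref{paramt}), and on those sets the all-sup bound fails and the lacunarity dichotomy described above is indispensable. Also note that the containment $\Phi_j(R(I,\xi))\subset\T_{\cic\alpha_j}(I,\xi_j)$ is asserted rather than checked; it is bookkeeping, but it is part of what the discussion in \cite{DoThiele15} actually verifies.
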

We will also need a localized version of \eqref{goodemb1} to be applied in conjuction with Theorem \ref{bademb}. The proof   is given in Section \ref{secgentents}. \begin{proposition} \label{goodemb} Let $f\in \mathcal S( \R)$ and $K>1$ be fixed. Assume $\cic{J}$ is  a finitely overlapping collection of intervals, see \eqref{finitover},  
  such that 
\begin{equation}
\label{goodemb3a}
\supp f \cap KJ = \emptyset \qquad \forall J \in \cic{J}.
\end{equation}
Then, for all $R>1$ and $2<q\leq \infty$
\begin{equation}
\label{goodemb3}
\|F_{\phi } (f)\cic{1}_{E_{\cic{J}}}\|_{L^{q}( \mathcal Z, \sigma_{\cic{\alpha}},\mathsf{s} )} \leq C_{\cic{\alpha},\phi,R,q} K^{-R} \|f\|_q,  \qquad E_{\cic{J}}:= \bigcup_{J \in \cic{J}} { \T}(3J) \times \R  \subset \mathcal Z.
\end{equation}
\end{proposition}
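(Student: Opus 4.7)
The plan is to deduce Proposition~\ref{goodemb} from the unrestricted embedding~\eqref{goodemb1} via real interpolation against an $L^\infty$-type endpoint that carries the full $K^{-R}$ decay. Concretely, the whole argument reduces to proving, for each $R'>0$,
\begin{equation*}
\sup_{I\subset\R,\,\xi\in\R}\, \mathsf{s}\bigl(F_\phi(f)\cic{1}_{E_{\cic{J}}}\bigr)\bigl(\T_{\cic{\alpha}}(I,\xi)\bigr)\,\leq\,C_{R',\cic{\alpha},\phi}\,K^{-R'}\|f\|_\infty,
\end{equation*}
since combining this with $\|F_\phi(f)\|_{L^{q_0}(\mathcal Z,\sigma_{\cic\alpha},\mathsf s)}\leq C\|f\|_{q_0}$ from~\eqref{goodemb1} for a fixed $q_0\in(2,q)$ and applying the Marcinkiewicz-type interpolation available for outer $L^p$ spaces produces $\|F_\phi(f)\cic{1}_{E_{\cic{J}}}\|_{L^q}\leq C K^{-R'(1-q_0/q)}\|f\|_q$; then choosing $R':=R/(1-q_0/q)$ gives the stated bound.

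For the $\mathsf{s}_\infty$-component of the endpoint, the starting point is the observation that on $\T(3J)\times\R$ with $J\in\cic{J}$, the hypothesis $\supp f\cap KJ=\emptyset$ combined with $u\in 3J$ and $t\leq 3|J|$ forces $|u-x|/t\geq K/12$ uniformly over $x\in\supp f$. Splitting the Schwartz decay of $\phi$ as $(1+|u-x|/t)^{-N}\leq C_N K^{-N/2}(1+|u-x|/t)^{-N/2}$ yields the pointwise bound $|F_\phi(f)(u,t,\eta)|\leq C_{N,\phi}K^{-N/2}\mathrm{M}f(u)\leq C_{N,\phi} K^{-N/2}\|f\|_\infty$ on $E_{\cic{J}}$, settling $\mathsf{s}_\infty$ upon choosing $N\geq 2R'$.

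For $\mathsf{s}_2$ the pointwise bound is too crude, since the measure $du\,dt\,d\eta$ on $\T^\ell_{\cic{\alpha}}(I,\xi)\cap E_{\cic{J}}$ need not scale like $|I|$. I will instead exploit Plancherel in the $\eta$ variable:
\begin{equation*}
\int_\R |F_\phi(f)(u,t,\eta)|^2\,d\eta \,=\, \frac{2\pi}{t^2}\int|f(x)|^2|\phi((u-x)/t)|^2\,dx,
\end{equation*}
and then bound the right-hand side using $\|f\|_\infty$ and the Schwartz tail of $|\phi|^2$ under the constraints $|u-x|\geq K|J|/4$ and $t\leq 3|J|$. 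A routine computation delivers, for $(u,t)\in\T(3J)$,
\begin{equation*}
\int_\R|F_\phi(f)(u,t,\eta)|^2\,d\eta\,\leq\,C_{R'}\,K^{-R'}\|f\|_\infty^2\,|J|^{-1}
\end{equation*}
with $R'>0$ arbitrary. Integrating in $(u,t)$, dividing by $|I|$, and summing over $J\in\cic{J}$ then reduces $\mathsf{s}_2^2(F_\phi(f)\cic{1}_{E_{\cic{J}}})(\T_{\cic{\alpha}}(I,\xi))$ to the geometric quantity $|I|^{-1}\sum_{J\in\cic{J}}|\T(I)\cap\T(3J)|/|J|$.

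Controlling this geometric sum is the main technical obstacle: without the quantitative $|J|^{-1}$ gain from Plancherel the finitely overlapping hypothesis would be insufficient and the sum would diverge, so the argument hinges on matching the $|J|^{-1}$ against the $|\T(3J)|\sim|J|^2$ scale. I plan to split $\cic{J}$ according to $|J|\leq|I|$ versus $|J|>|I|$. For small scales one uses $|\T(I)\cap\T(3J)|\leq 9|J|^2$, reducing the partial sum to $\sum_{J\subset 7I}|J|\leq C|I|$ directly from~\eqref{finitover}. For large scales one uses $|\T(I)\cap\T(3J)|\leq|I|^2$ together with a dyadic counting argument: at each scale $|J|\sim 2^k|I|$, $k\geq 0$, finite overlap applied to an enlargement of $I$ yields only $O(1)$ members of $\cic{J}$ meeting $I$, so the partial sum is dominated by a convergent geometric series $\sum_{k\geq 0}C|I|\,2^{-k}$. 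Altogether one obtains $\mathsf{s}_2\leq C K^{-R'/2}\|f\|_\infty$, which together with the $\mathsf{s}_\infty$ bound completes the endpoint estimate and hence, after interpolation, the proposition.
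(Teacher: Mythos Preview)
Your proof is correct and takes a genuinely different route from the paper.

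The paper obtains the $L^\infty$ endpoint by slicing the generalized tent into fixed-frequency pieces via the averaging formula~\eqref{sizeaver}, recognizing each slice as arising from a $\xi$-adapted system, and then invoking Lemma~\ref{loc2}; that lemma in turn decomposes each $\phi_t$ into a compactly supported piece (which vanishes on the relevant tents) plus a $K^{-Q}$-small tail (Lemma~\ref{lemmasplit}), and controls the tail through the Calder\'on--Zygmund based Lemma~\ref{loc1}. The final step interpolates the $L^\infty$ endpoint against the weak $L^2$ embedding~\eqref{goodemb2}.

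Your argument bypasses all of this machinery. The $\mathsf s_\infty$ part is handled by a direct Schwartz tail split, and for $\mathsf s_2$ you apply Plancherel in the modulation variable $\eta$ to convert the size into a purely spatial quantity, then close with the finite-overlap geometric sum. This is more elementary and self-contained; it does not require adapted systems, the splitting Lemma~\ref{lemmasplit}, or any Calder\'on--Zygmund decomposition. What the paper's approach buys in exchange is modularity (the adapted-system lemmas are reused elsewhere, e.g.\ in the treatment of the bad part in Section~\ref{sec8}) and a slightly sharper intermediate bound~\eqref{goodemb3b} with $\sup_{J}\inf_{x\in J}\mathrm{M}f(x)$ on the right in place of $\|f\|_\infty$, which the paper actually uses later in~\eqref{firstchoice} and~\eqref{bout}. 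For the proposition as stated, however, your argument suffices.

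One minor point: to make the interpolation step rigorous you should, as the paper does, apply Marcinkiewicz interpolation to the fixed linear operator $f\mapsto F_\phi\bigl(f\cic{1}_{(\bigcup_{J}KJ)^c}\bigr)\cic{1}_{E_{\cic J}}$, so that both endpoint bounds hold for arbitrary $f$ rather than only for $f$ satisfying the support hypothesis.
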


\begin{proof}[Proof of Theorem \ref{modelsumprop}]  
The implicit constants in this proof are allowed to depend on $p_1,p_2$, $\phi$, $\vec \beta$ without explicit mention. We deal in detail with the harder case $1<p_1,p_2<2$. When either or both $p_1>2,p_2>2$ , the estimate of the theorem is obtained by a similar (in fact, simpler) argument where \eqref{goodemb1} is used in place of the main estimate \eqref{cet} for the corresponding  function $f_j$.    The case where either or both $p_1=2,p_2=2$ then follows by interpolation, see for instance \cite{MuscSchlII}.
We will use later that, since
$1/p_1' + 1/{p_2'} > 1/2
$
we can find a tuple $q_1,q_2,q_3$ satisfying
\[\textstyle \frac{1}{q_1}+\frac{1}{q_2}+ \frac{1}{q_3}=1, \qquad 
q_1>p_1',\; q_2 >p_2',\; q_3>2.
\] which is fixed from now on.
By linearity in $f_1,f_2$ and horizontal scaling invariance, that is by possibly replacing $f_j$ with
$
|A|^{-\frac{1}{p_j}}f_j(|A|^{-1} \cdot)
$   we may work with
$$
\|f_1\|_{p_1}= \|f_2\|_{p_2}= |A|=1.
$$

We begin the actual proof. To construct $\widetilde A$, we find $C_0$ large enough such that each
$$
H_j:=\{ x \in\R: \mathrm{M}_{p_j} (\mathrm{M}f_j)(x) > C_0\}, \qquad j=1,2
$$
has measure less than $1/4$ and set $\widetilde A= A \cap (H_1\cup H_2)^c$. 
Let us define, with reference to the notation of \eqref{maxfctbd}, and for $k=0,1,\ldots,$
$$
\cic{I}_k:= \bigcup_{j=1,2} \cic{I}_{f_j, 2^{10+k} C_0, p_j}, \qquad  E_k:=\bigcup_{j=1,2} E_{f_j, 2^{10+k} C_0, p_j}, \qquad \widetilde{E_k}= E_{k} \setminus E_{k+1}.
$$
Let $f_3$ be supported on $\widetilde A$ and bounded by 1 be fixed from now on. For reasons of space, let us write  
$
G_j $, $j=1,2,3$  as in the second line of \eqref{Vbeta}. We have, with $ z=( u,t,\eta)$
\begin{equation}\label{split0}
\big|\mathsf{V}_{\vec \beta} (f_1,f_2,f_3)\big| \leq \int_{\mathcal Z}  \left(\prod_{j=1}^3 |G_j \cic{1}_{\mathcal Z\setminus E_0} (z)|\right) \, \d z+ \sum_{k=0}^\infty \int_{\mathcal Z}  \left(\prod_{j=1}^3 |G_j\cic{1}_{\widetilde{E_k}}(z)|\right) \, \d z.
\end{equation} We will now prove that \eqref{split0} $\lesssim 1$.
  Let $\cic{\alpha}_j=(\alpha_j,\beta_j, \delta)$,  $j=1,2,3$, be our parameter vectors with $\vec \alpha=(\alpha_1,\alpha_2,\alpha_3)$ chosen as in Lemma \ref{lemmaredux}.
By  Theorem \ref{bademb}, estimate \eqref{cet} for  $q_j$, $j=1,2$ we have the embeddings
\begin{equation}
\label{strtype1}
\|F_\phi (f_j)\cic{1}_{\mathcal Z\setminus E_k}\|_{L^{q_j}(\mathcal Z,\sigma_{\cic{\alpha}_j},\mathsf{s})} \leq \left\|F_\phi (f_j)\cic{1}_{\mathcal Z\setminus E_{f_j, 2^{10+k} C_0, p_j}}\right\|_{L^{q_j}(\mathcal Z,\sigma_{\cic{\alpha}_j},\mathsf{s})} \lesssim 2^k
\end{equation} 
To control $F_\phi (f_3)$ we use  \eqref{goodemb1} and \eqref{goodemb3}, with $R=10$ say, and obtain the estimates
 \begin{equation}\label{strtype4}
 \begin{split} &
 \|F_\phi (f_3)\|_{L^{q_3}(\mathcal Z,\sigma_{\cic{\alpha}_3},\mathsf{s})} \lesssim \|f_3\|_{q_3}\leq |A|^{\frac{1}{q_3}}= 1,\\
&\|F_\phi (f_3)\cic{1}_{E_k}\|_{L^{q_3}(\mathcal Z,\sigma_{\cic{\alpha}_3},\mathsf{s})} \lesssim 2^{-10k} \|f_3\|_{q_3}\leq 2^{-10k}  \end{split} \end{equation}
We remark that the assumptions of \eqref{goodemb3} are met since    the support of $f_3$ does not intersect $3\cdot 2^k I$ for all $I \in \cic{I}_k$.  
Using Lemma \ref{lemmaredux}, and later \eqref{strtype1}  and the first estimate of \eqref{strtype4}    the first term in \eqref{split0} is  bounded  by 
\[
\left(\prod_{j=1}^2  \|F_\phi (f_j)\cic{1}_{\mathcal Z\setminus E_0}\|_{L^{q_j}(\mathcal Z,\sigma_{\cic{\alpha}_j},\mathsf{s})} \right) \|F_\phi (f_3)\|_{L^{q_3}(\mathcal Z,\sigma_{\cic{\alpha}_3},\mathsf{s})} \lesssim 1.
\]
  Again using Lemma \ref{lemmaredux}, and later \eqref{strtype1}, trivial estimates and the second bound of \eqref{strtype4}    the second term in \eqref{split0} is controlled by 
\[\begin{split} & \quad 
 \prod_{j=1}^3  \|F_\phi (f_j)\cic{1}_{\widetilde{E}_k}\|_{L^{q_j}(\mathcal Z,\sigma_{\cic{\alpha}_j},\mathsf{s})} \\ & \leq \left(  \prod_{j=1}^2  \|F_\phi (f_j)\cic{1}_{\mathcal Z \setminus  E_{k+1}}\|_{L^{q_j}(\mathcal Z,\sigma_{\cic{\alpha}_j},\mathsf{s})}\right) \|F_\phi (f_3)\cic{1}_{E_k}\|_{L^{q_3}(\mathcal Z,\sigma_{\cic{\alpha}_3},\mathsf{s})}  \lesssim 2^{-8k}
\end{split}
\]
This is summable in $k$, completing the estimation of \eqref{split0} and the proof of the Theorem.
\end{proof}


\section{Elementary properties of outer $L^p$} \label{secoutlp}  In this short section, we collect, without proofs, some  basic properties of outer $L^p$ spaces which we   use in our analysis. The following proposition summarizes \cite[Proposition 3.1, 3.3]{DoThiele15}.
\begin{proposition}\label{propoutermeassumm} Let $(Z,{\sigma}, \mathsf{s})$ be an outer measure space, $F,G \in \mathcal B(Z)$, and     $0<p\leq \infty$. Then
\begin{align}
& \nonumber  |F|\leq |G| \implies \|F\|_{L^p(Z,\sigma,\mathsf{s})}\leq \|G\|_{L^p(Z,\sigma,\mathsf{s})},\\
&\nonumber \|\lambda F\|_{L^{p}(Z,{\sigma}, \mathsf{s})}=|\lambda| \|  F\|_{L^{p}(Z,{\sigma}, \mathsf{s})},  \qquad\forall \lambda\in\mathbb{C},
\\
&\nonumber\|  F\|_{L^{p}(Z,\lambda{\sigma}, \mathsf{s})}=\lambda^{1/p} \|  F\|_{L^{p}(Z,{\sigma}, \mathsf{s})},  \qquad\forall \lambda>0,\\
& \label{Lpsubadd} \|  F+G\|_{L^{p}(Z, {\sigma}, \mathsf{s})}\leq 2c_{\mathsf{s}}\left( \|  F\|_{L^{p}(Z, {\sigma}, \mathsf{s})}+ \|  G\|_{L^{p}(Z, {\sigma}, \mathsf{s})}\right),\\
\label{Lplog}\begin{split} &  \|F\|_{L^p(Z,\sigma,\mathsf{s})}\leq c_{p,p_1,p_2}\left(\|F\|_{L^{p_1,\infty}(Z,\sigma,\mathsf{s})}\right)^{\theta}\left(\|F\|_{L^{p_2,\infty}(Z,\sigma,\mathsf{s})}\right)^{1-\theta}, \\   &
0<p_1<p<p_2\leq\infty, \quad \textstyle \frac{1}{p}=\frac{\theta}{p_1}+ \frac{1-\theta}{p_2}.
\end{split}
\end{align}
 And identical statements hold for the spaces $L^{p,\infty}$.
\end{proposition}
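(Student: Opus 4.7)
The plan is to reduce each assertion to a statement about the super level measure $\lambda\mapsto \mu(\mathsf{s}(F)>\lambda)$ and then transfer it to $\|\cdot\|_{L^p}$ and $\|\cdot\|_{L^{p,\infty}}$ via, respectively, the layer-cake integral and the supremum definition. All of the work is thus at the level of the sets $E\subset Z$ realizing (near-)infima in the definition of $\mu(\mathsf{s}(F)>\lambda)$.

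The first three claims follow directly from the size axioms. For monotonicity, if $|F|\leq |G|$ and $E$ satisfies $\underset{Z\setminus E}{\text{outsup}\,}\mathsf{s}(G)\leq \lambda$, the monotonicity axiom for $\mathsf{s}$ gives the same bound with $F$ in place of $G$, so $E$ is admissible in the definition of $\mu(\mathsf{s}(F)>\lambda)$; taking the infimum over such $E$ yields $\mu(\mathsf{s}(F)>\lambda)\leq \mu(\mathsf{s}(G)>\lambda)$, and integration against $p\lambda^{p-1}\,\d\lambda$ (or supremum over $\lambda$) gives the $L^p$ and $L^{p,\infty}$ versions. Scalar homogeneity $\mathsf{s}(\lambda F)=|\lambda|\mathsf{s}(F)$ yields $\mu(\mathsf{s}(\lambda F)>t)=\mu(\mathsf{s}(F)>t/|\lambda|)$; a change of variable in the layer-cake integral produces the factor $|\lambda|$. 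Scaling $\sigma$ by $\lambda>0$ scales the outer measure $\mu$ by the same $\lambda$ through its definition as an infimum of sums of premeasures, so the factor $\lambda^{1/p}$ pops out of the $p$-th root of the integral.

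The quasi-triangle inequality \eqref{Lpsubadd} follows by combining the size subadditivity axiom \eqref{sizesubadd} with the countable subadditivity of $\mu$. The key observation: if $E_F,E_G$ realize near-infima in the definitions of $\mu(\mathsf{s}(F)>\lambda)$ and $\mu(\mathsf{s}(G)>\lambda)$, then $E:=E_F\cup E_G$ is admissible for $F+G$ at level $2c_{\mathsf{s}}\lambda$, since on the complement \eqref{sizesubadd} gives $\mathsf{s}(F+G)\leq c_{\mathsf{s}}\mathsf{s}(F)+c_{\mathsf{s}}\mathsf{s}(G)\leq 2c_{\mathsf{s}}\lambda$ on every $\T\in\Tt$. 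Using $\mu(E)\leq \mu(E_F)+\mu(E_G)$ and letting the near-infima tighten,
\[
\mu\bigl(\mathsf{s}(F+G)>2c_{\mathsf{s}}\lambda\bigr)\leq \mu\bigl(\mathsf{s}(F)>\lambda\bigr)+\mu\bigl(\mathsf{s}(G)>\lambda\bigr).
\]
A rescaling of the layer-cake integral by $2c_{\mathsf{s}}$, together with the elementary estimate $(a+b)^{1/p}\leq a^{1/p}+b^{1/p}$ for $p\geq 1$ (or its factor-of-$2$ analogue for $0<p<1$), delivers \eqref{Lpsubadd}, and the same super-level-set inclusion handles the $L^{p,\infty}$ case.

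For the log-convexity \eqref{Lplog}, which is the only step that is not essentially bookkeeping, the plan is the standard real-variable interpolation adapted to the outer setting: split
\[
\|F\|_{L^p(Z,\sigma,\mathsf{s})}^p=\int_0^{\lambda_0}p\lambda^{p-1}\mu(\mathsf{s}(F)>\lambda)\,\d\lambda+\int_{\lambda_0}^\infty p\lambda^{p-1}\mu(\mathsf{s}(F)>\lambda)\,\d\lambda,
\]
and estimate each piece with the corresponding weak-type bound $\mu(\mathsf{s}(F)>\lambda)\leq \lambda^{-p_j}\|F\|_{L^{p_j,\infty}(Z,\sigma,\mathsf{s})}^{p_j}$, $j=1,2$. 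The hypothesis $p_1<p<p_2$ guarantees integrability at $0$ and $\infty$ respectively, and optimizing $\lambda_0$ so as to balance the two contributions produces the geometric mean with Hölder exponent $\theta$. The $p_2=\infty$ case is simpler: one truncates the upper integral at $\lambda=\|F\|_{L^\infty}$ since $\mu(\mathsf{s}(F)>\lambda)=0$ above this threshold. The only mildly delicate point, and the place where one must take some care, is tracking how the constant $c_{p,p_1,p_2}$ blows up as $p$ approaches the endpoints; this is a routine computation but is the closest this proposition comes to an obstacle. The $L^{p,\infty}$ analogue follows by applying the same split to the supremum form of the quasi-norm.
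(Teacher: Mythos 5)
The paper itself offers no proof of this proposition: Section \ref{secoutlp} states it ``without proofs'' as a summary of \cite[Propositions 3.1, 3.3]{DoThiele15}. So the only question is whether your argument is sound, and for the most part it is, and it is the standard route: monotonicity, homogeneity in $F$, and the scaling in $\sigma$ reduce exactly as you say to the size axioms and the definition of the super level measure; the union $E_F\cup E_G$ of near-optimal exceptional sets, together with \eqref{sizesubadd} and subadditivity of $\mu$, gives $\mu(\mathsf{s}(F+G)>2c_{\mathsf{s}}\lambda)\leq\mu(\mathsf{s}(F)>\lambda)+\mu(\mathsf{s}(G)>\lambda)$ (the needed inequalities $\mathsf{s}(F\cic{1}_{Z\setminus E})\leq\lambda$, $\mathsf{s}(G\cic{1}_{Z\setminus E})\leq\lambda$ on the larger set $E$ follow from the monotonicity axiom); and the split of the layer-cake integral at an optimized $\lambda_0$, with the truncation $\mu(\mathsf{s}(F)>\lambda)=0$ for $\lambda\geq\|F\|_{L^\infty(Z,\sigma,\mathsf{s})}$ when $p_2=\infty$, is the usual Marcinkiewicz computation and yields \eqref{Lplog}; the $L^{p,\infty}$ analogues follow as you indicate (for the weak form of \eqref{Lplog} one can even take a pointwise weighted geometric mean of the two bounds on $\mu(\mathsf{s}(F)>\lambda)$, with constant $1$).

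The one genuine discrepancy is the constant in \eqref{Lpsubadd} when $0<p<1$. Your chain gives $\|F+G\|_{L^p}\leq 2c_{\mathsf{s}}\bigl(\|F\|_{L^p}^p+\|G\|_{L^p}^p\bigr)^{1/p}$, and for $p\geq 1$ the bound $(a+b)^{1/p}\leq a^{1/p}+b^{1/p}$ finishes; but for $p<1$ the ``factor-of-$2$ analogue'' is really $(a+b)^{1/p}\leq 2^{\frac1p-1}\bigl(a^{1/p}+b^{1/p}\bigr)$, so your argument produces the constant $2^{1/p}c_{\mathsf{s}}$, not $2c_{\mathsf{s}}$. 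This cannot be repaired, because the statement with the uniform constant $2c_{\mathsf{s}}$ is false in this generality for small $p$: take $Z=\mathbb N$ with the tents being singletons, $\sigma\equiv 1$ and $\mathsf{s}(F)(\{n\})=|F(n)|$ (so $c_{\mathsf{s}}=1$); then $\|\cdot\|_{L^p(Z,\sigma,\mathsf{s})}$ is the classical $\ell^p$ quasi-norm, and for $F,G$ the indicators of two distinct singletons one has $\|F+G\|_{L^p}=2^{1/p}$ while $2c_{\mathsf{s}}(\|F\|_{L^p}+\|G\|_{L^p})=4$, so \eqref{Lpsubadd} fails once $p<\tfrac12$. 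Thus \eqref{Lpsubadd} (and its weak version) should be read with $p\geq 1$, or with a $p$-dependent constant below $p=1$, which is what your proof actually delivers; this is immaterial for the paper, since \eqref{Lpsubadd} and its iterate \eqref{trianin} are only ever applied with exponents larger than $1$.
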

\begin{remark}
 Iterating \eqref{Lpsubadd}, one obtains the quasi-triangle inequality
\begin{equation}
\label{trianin}
\left\|\sum_{k=0}^K F_k\right\|_{L^p(Z,\sigma,\mathsf{s})} \leq\sum_{k=0}^K  (2 c_{\mathsf{s}})^{(k+1)} \left\| F_k\right\|_{L^p(Z,\sigma,\mathsf{s})}.
\end{equation}
Note that the constant $c_{\mathsf s}$ in \eqref{Lpsubadd} is the same appearing in \eqref{sizesubadd} and can be taken equal to 1 in our concrete cases. 
\end{remark}
We also record the Marcinkiewicz interpolation theorem in outer $L^p$, \cite[Proposition 3.5]{DoThiele15}
\begin{proposition}  \label{propint} Let $(X,\nu)$ be a measure space,  $(Z,\sigma, \mathsf{s})$ be an outer measure space and $T$ be a quasi-sublinear operator mapping functions in $L^{p_1}(X,\nu)$ and $L^{p_2}(X, \nu)$ into $\mathcal{B}(Z)$, for some $0< p_1<p_2\leq\infty$. Assume that for all $f\in L^{p_1}(X, \nu)+L^{p_2}(X, \nu)$,
\[
\|T(f)\|_{L^{p_j,\infty}(Z,\sigma, \mathsf{s})}\leq B_j\|f\|_{L^{p_j}(X, \nu)}, \qquad j=1,2.
\]
Then, for $0<\theta<1$,
\[
\|T(f)\|_{L^p(Z,\sigma, \mathsf{s})}\leq C_{\theta,p_1,p_2} B_1^{\theta}B_2^{1-\theta}\|f\|_{L^p(X, \nu)}, \qquad   \textstyle \frac{1}{p}=\frac{\theta}{p_1}+ \frac{1-\theta}{p_2}.
\]

\end{proposition}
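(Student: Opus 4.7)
The plan is to adapt the classical layer-cake proof of Marcinkiewicz interpolation to the outer setting. The essential observation is that although $\mu$ is only an outer measure, the map $\lambda \mapsto \mu(\mathsf{s}(F)>\lambda)$ plays the role of a distribution function: it is nonincreasing, the layer-cake identity
\[
\|F\|_{L^p(Z,\sigma,\mathsf{s})}^p = p\int_0^\infty \lambda^{p-1}\mu(\mathsf{s}(F)>\lambda)\,\d\lambda
\]
holds by definition, and, most importantly, the size subadditivity \eqref{sizesubadd} together with the countable subadditivity of $\mu$ gives the quasi-triangle bound
\[
\mu(\mathsf{s}(F+G)>\lambda) \leq \mu\bigl(\mathsf{s}(F)>\textstyle\frac{\lambda}{2c_{\mathsf s}}\bigr) + \mu\bigl(\mathsf{s}(G)>\textstyle\frac{\lambda}{2c_{\mathsf s}}\bigr),
\]
which is the outer analogue of the classical splitting used in Marcinkiewicz.

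First, for each $\lambda>0$ I would decompose $f = f^\lambda + f_\lambda$ at threshold $\alpha(\lambda)$, where $f^\lambda := f \cic{1}_{\{|f|>\alpha(\lambda)\}} \in L^{p_1}(X,\nu)$ and $f_\lambda := f - f^\lambda \in L^{p_2}(X,\nu)$, the threshold $\alpha(\lambda)$ being the standard optimizer $\alpha(\lambda) \sim (\lambda/B_1)^{p_1/p} (B_2/\lambda)^{\dots}$ chosen so that after integration one recovers $\|f\|_p^p$. In the endpoint case $p_2=\infty$, $\alpha(\lambda)$ must be picked so that $\|Tf_\lambda\|_{L^\infty(Z,\sigma,\mathsf{s})} \leq \lambda/(4 C_{\mathsf s})$, which forces the second contribution to vanish; this uses the hypothesis $\|Tg\|_{L^{\infty}} \leq B_2\|g\|_\infty$ directly.

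Second, by the quasi-sublinearity of $T$ and the size subadditivity one gets $\mathsf{s}(T f)\leq C_{\mathsf s}(\mathsf{s}(Tf^\lambda)+\mathsf{s}(Tf_\lambda))$; combining with the outer quasi-triangle bound above and the two weak-type hypotheses yields
\[
\mu(\mathsf{s}(Tf)>\lambda) \;\leq\; \left(\tfrac{C B_1}{\lambda}\right)^{p_1}\!\! \|f^\lambda\|_{L^{p_1}(X,\nu)}^{p_1} + \left(\tfrac{C B_2}{\lambda}\right)^{p_2}\!\! \|f_\lambda\|_{L^{p_2}(X,\nu)}^{p_2}.
\]
Plugging this into the layer-cake formula, interchanging order of integration via Fubini on $(0,\infty)\times X$, and carrying out the elementary $\lambda$-integrals $\int_0^{(\alpha^{-1}(|f(x)|))} \lambda^{p-p_j-1} \d\lambda$ with the aforementioned choice of $\alpha$ produces $C_{\theta,p_1,p_2}\bigl(B_1^\theta B_2^{1-\theta}\bigr)^{p}\|f\|_{L^p(X,\nu)}^{p}$, exactly as in the classical proof.

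The main obstacle, and the step that genuinely uses the outer structure rather than classical measure theory, is the subadditivity of the super level measure under a quasi-subadditive size: the constant $c_{\mathsf s}$ appearing in \eqref{sizesubadd} enters the denominator inside $\mu(\mathsf{s}(\cdot)>\cdot)$, and one needs to be careful that this only inflates the final constant by a factor depending on $c_{\mathsf s}, p_1, p_2$, not on $\theta$ degenerately as $\theta\to 0,1$. Once this subadditivity is established via the inf-over-covers definition of $\mu$ (by concatenating an admissible cover for $\{\mathsf{s}(F)>\lambda/(2c_{\mathsf s})\}$ with one for $\{\mathsf{s}(G)>\lambda/(2c_{\mathsf s})\}$), the remainder of the argument is a faithful transcription of the classical Marcinkiewicz proof.
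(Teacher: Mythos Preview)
Your approach is correct and is exactly the standard adaptation of the classical Marcinkiewicz argument to the outer setting. Note, however, that the paper does not actually prove this proposition: it merely records it as \cite[Proposition 3.5]{DoThiele15}, so there is no ``paper's own proof'' to compare against. Your sketch aligns with the proof given in that reference, the only genuinely outer-measure-theoretic ingredient being, as you correctly identify, the subadditivity of the super level measure, which follows from monotonicity of the size together with countable subadditivity of the generated outer measure.
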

\begin{remark}[Scaling properties of estimate \eqref{cet}] \label{scalingrem} The structure of the wave packet transform \eqref{defFphi} and of the outer $L^q(\mathcal Z, \sigma_{\cic{\alpha}},\mathsf{s})$ spaces yield a useful horizontal  scaling property for estimate \eqref{cet}.
Fix $1\leq p<\infty$,   $f\in L^p(\R)$ and $\kappa>0$ and write, referring to \eqref{maxfctbd},
\[
f_{\kappa}:= \mathrm{Dil}^{p}_{\kappa^{-p}} f= \kappa  f(\kappa^{p}\cdot), \qquad F:= F_{\phi } (f) \cic{1}_{\mathcal Z\setminus  E_{f,\lambda,p}}, \qquad F_\kappa:= F_{\phi } (f_\kappa) \cic{1}_{\mathcal Z\setminus  E_{f_\kappa,\kappa\lambda,p}}
\]
Then $\|f_\kappa\|_p=\|f\|_p$ and 
\begin{equation}
\label{scalingest}
\|F\|_{L^q(\mathcal Z, \sigma_{\cic{\alpha}},\mathsf{s})}= \kappa^{\frac pq-1}  \|F_\kappa\|_{L^q(\mathcal Z, \sigma_{\cic{\alpha}},\mathsf{s})}
\end{equation}
and similarly for outer weak $L^q$ norms.
To see this, introduce the bijection \[
T_\kappa:\mathcal Z\to \mathcal Z,\qquad  T_\kappa(u,t,\eta)=(\kappa^{-p}u,\kappa^{-p}t,\kappa^p\eta)\]
Then \cite[Proposition 3.2]{DoThiele15} applied to both $T_\kappa$ and its inverse yields the equality of norms \[
  \|G\circ T_\kappa\|_{L^q(\mathcal Z, \sigma_{\cic{\alpha}},\mathsf{s})}=\kappa^{\frac pq}\|G\|_{L^q(\mathcal Z, \sigma_{\cic{\alpha}},\mathsf{s})}\]
and similarly for weak norms.
We omit the straightforward verification of the assumptions of the proposition for $T_\kappa$.
Horizontal scaling  of the definition of \eqref{maxfctbd} yields  
\[
 (u,t,\eta) \in E_{f , \lambda,p}   \iff T_\kappa(u,t,\eta)\in   E_{f_\kappa,\kappa\lambda,p}
\]
while the structure of \eqref{defFphi} entails 
\[
F_{\phi } (f)(u,t,\eta)=\kappa^{-1}F_{\phi } (f_\kappa)\left(T_\kappa(u,t,\eta)\right),
\]
and we conclude that $F=\kappa^{-1} F_\kappa \circ T_\kappa $, whence \eqref{scalingest}.
\end{remark}
\section{Adapted systems}
\label{adapsec}

 Let $\xi \in \R$ and $0<T\leq \infty$. A family  
$$
{\Phi}^{\xi}:=\{\phi_t: t \in (0,T) \} \subset \mathcal S(\R)
$$
is said to be a $\xi$-\emph{adapted system}  with \emph{adaptation constants} $A_{N}= A_{N}( {\Phi}^{\xi})$ if
\begin{equation}
\label{adaptxi}
\sup_{\substack{n,m\leq N }}\sup_{t \in (0,T)} \sup_{x\in \R} t^{m+1} \left(   1+\left| \frac{x}{t} \right|\right)^n \left|   (\e^{-i\xi \cdot} \phi_t(\cdot))^{(m)}(x) \right| \leq A_{N}
\end{equation}
for all nonnegative integers $N$. The $\xi$-adapted system ${\Phi}^{\xi}$ is said to \emph{have mean zero}  if
\begin{equation}
\label{adaptxizero}
 \widehat{\phi_t} (\xi) =0 \qquad \forall t \in (0,T).
\end{equation}
  A prime example of $\xi$-adapted system [resp.\ with mean zero] is obtained by  dilation and modulation of a single   $\phi \in \mathcal S(\R)$ [resp.\ with mean zero]
\begin{equation}
\label{dilmod}
\phi_t:= \mathrm{Mod}_\xi \mathrm{Dil}_t^1 \phi,\qquad
\phi_t(x):= \e^{i\xi x} \frac{1}{t} \phi\left( \frac{x}{t}\right), \qquad t \in (0,\infty).
\end{equation}

The remainder of the section is occupied by three lemmata, each of which plays a significant role in our investigation. First, we record the following well-known principle of the Littlewood-Paley theory.
\begin{lemma}
\label{areaintegral}
Let ${\Phi}^{\xi}$ be a $\xi$-adapted system with mean zero. Then 
$$
\left(  \int _{  \R\times (0,\infty)} |f\ast \phi_t (u)|^2 \, \frac{\d u \d t}{t} \right) \lesssim   A_{3}({\Phi}^{\xi}) \|f\|_2.
$$
\end{lemma}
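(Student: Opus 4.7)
The plan is to prove the claim by Plancherel's theorem in the $u$-variable and Fubini. For each fixed $t>0$,
$$\int_\R |f\ast\phi_t(u)|^2\,\d u \;=\; \frac{1}{2\pi}\int_\R |\widehat f(\theta)|^2\,|\widehat{\phi_t}(\theta)|^2\,\d\theta,$$
so integrating against $\d t/t$ and swapping the order of integration reduces the lemma (reading the displayed right-hand side as carrying the natural outer $1/2$ power) to the pointwise multiplier bound
$$m(\theta)\;:=\;\int_0^\infty |\widehat{\phi_t}(\theta)|^2\,\frac{\d t}{t}\;\lesssim\; A_3(\Phi^\xi)^2\qquad\text{uniformly in }\theta\in\R.$$

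To obtain this bound I set $\psi_t(x):=\e^{-i\xi x}\phi_t(x)$, so that $\widehat{\phi_t}(\theta)=\widehat{\psi_t}(\theta-\xi)$. The adaptation hypothesis \eqref{adaptxi} with $N=3$ gives the pointwise Schwartz estimates $|\psi_t^{(k)}(x)|\leq A_3\, t^{-k-1}(1+|x/t|)^{-3}$ for $k=0,1,2$, which integrate to $\|\psi_t\|_1\lesssim A_3$, $\|x\psi_t\|_1\lesssim A_3 t$, and $\|\psi_t''\|_1\lesssim A_3 t^{-2}$. The mean zero hypothesis \eqref{adaptxizero} reads $\widehat{\psi_t}(0)=0$. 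I then estimate $\widehat{\psi_t}(\eta)$ in two complementary regimes. In the high-frequency regime $|t\eta|\geq 1$, two integrations by parts yield $|\widehat{\psi_t}(\eta)|\leq \eta^{-2}\|\psi_t''\|_{L^1}\lesssim A_3(t|\eta|)^{-2}$. In the low-frequency regime $|t\eta|\leq 1$, the cancellation $\widehat{\psi_t}(0)=0$ combined with $|\partial_\sigma\widehat{\psi_t}(\sigma)|\leq\|x\psi_t\|_{L^1}\lesssim A_3 t$ gives
$$|\widehat{\psi_t}(\eta)|=|\widehat{\psi_t}(\eta)-\widehat{\psi_t}(0)|\leq |\eta|\sup_\sigma|\partial_\sigma\widehat{\psi_t}(\sigma)|\lesssim A_3 t|\eta|.$$

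Combining the two regimes, $|\widehat{\psi_t}(\eta)|^2\lesssim A_3^2\min\{(t|\eta|)^2,(t|\eta|)^{-4}\}$, and the substitution $s=t|\eta|$ produces, for every $\theta\neq\xi$,
$$m(\theta)\;\lesssim\; A_3^2\int_0^\infty \min\{s^2,s^{-4}\}\,\frac{\d s}{s}\;\lesssim\; A_3^2,$$
the last integral being absolutely convergent at both endpoints. The case $\theta=\xi$ is a null set. There is no real obstacle here: the essence of the argument is the observation that the two conditions defining a $\xi$-adapted system with mean zero — Schwartz-type decay at scale $t$ (quantified by $A_3$) and the frequency-side cancellation $\widehat{\phi_t}(\xi)=0$ — produce precisely the two-sided decay of $\widehat{\psi_t}$ about the origin required for a Calder\'on-type reproducing multiplier to be bounded on $L^2$.
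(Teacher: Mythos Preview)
Your proof is correct and follows essentially the same approach as the paper: apply Plancherel in the spatial variable, exchange the order of integration, and reduce to the uniform multiplier bound $\sup_\theta\int_0^\infty|\widehat{\phi_t}(\theta)|^2\,\frac{\d t}{t}\lesssim A_3^2$, which is obtained from the two-sided Fourier decay estimate coming from cancellation at low frequencies and smoothness at high frequencies. The only cosmetic differences are that the paper first reduces to $\xi=0$ and records the slightly weaker (but sufficient) high-frequency decay $(1+t|\eta|)^{-1}$, whereas you work directly with $\psi_t=\e^{-i\xi\cdot}\phi_t$ and use two integrations by parts to get $(t|\eta|)^{-2}$; your reading of the stated inequality as carrying an implicit square root is also consistent with how the estimate is used later.
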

\begin{proof}By replacing $f$ with $f(\cdot) \e^{-ix \xi}$, we can reduce to the case $\xi=0$. A consequence of \eqref{adaptxi}-\eqref{adaptxizero} is that
$$
|\widehat{\phi_t} (\xi)| \lesssim  A_{3}\min\{t|\xi|,(1+t|\xi|)^{-1}\}.
$$ 
This entails  
$$
\sup_{\xi} \int_{0}^{\infty} |\widehat{\phi_t} (\xi)|^2 \frac{\d t}{ t }   \lesssim   A_{3} ,
$$
from which the lemma follows by two   applications of Plancherel. 
\end{proof}
The next two lemmata deal with the decomposition  of the functions belonging to a   $\xi$-adapted system  into a compactly supported part and an exponentially small remainder. 
   \begin{lemma}
\label{lemmasplit}
Let ${\Phi}^{\xi}$ be a $\xi$-{adapted} system [resp.\ with mean zero]. Let $K\geq 1$ and $Q,N\in \mathbb N$ be given. There exist   two   $\xi$-adapted systems [resp.\ with mean zero] ${\Psi}^{\xi}=\{\psi_t\}, {\Upsilon}^{\xi}=\{\upsilon_t\}$,   such that
\begin{align} & \label{lemmasplit0}
\phi_t = \psi_t+ K^{-Q} \upsilon_t \qquad \forall t \in (0,T),\\ \nonumber  &A_{N}({\Psi}^{\xi}),A_{N}({\Upsilon}^{\xi}) \leq C_{Q,N}  A_{N+Q+1}( {\Phi}^{\xi})\qquad  N \in \mathbb N,  
\\ &  \supp \psi_t \subset [-Kt,Kt] \qquad \forall t \in (0,T).
\nonumber
\end{align}
\end{lemma}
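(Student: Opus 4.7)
The plan is to build $\psi_t$ by a smooth spatial cutoff of $\phi_t$ at scale $Kt$, with the excess mass beyond this scale providing the exponentially small remainder $\upsilon_t$; in the mean-zero case, the cutoff's Fourier defect at $\xi$ is then corrected by adding a compactly supported modulated bump carrying unit Fourier mass at $\xi$.

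Fix $\chi \in C_c^\infty(\R)$ with $\chi \equiv 1$ on $[-\tfrac12,\tfrac12]$ and $\supp \chi \subset [-1,1]$, and write $\chi_{K,t}(x) := \chi(x/(Kt))$. First set
\[
\tilde\psi_t := \phi_t \chi_{K,t}, \qquad \tilde\upsilon_t := K^Q \phi_t(1-\chi_{K,t}),
\]
so that $\phi_t = \tilde\psi_t + K^{-Q}\tilde\upsilon_t$ and $\supp \tilde\psi_t \subset [-Kt,Kt]$. In the absence of the mean-zero requirement, one takes $(\psi_t,\upsilon_t) = (\tilde\psi_t,\tilde\upsilon_t)$. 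The adaptation bound $A_N(\tilde\Psi^{\xi}) \leq C_N A_N(\Phi^\xi)$ follows by applying Leibniz to $\e^{-i\xi x}\tilde\psi_t = (\e^{-i\xi x}\phi_t)\chi_{K,t}$ and noting that each derivative falling on $\chi_{K,t}$ contributes a factor $(Kt)^{-1} \leq t^{-1}$ together with a Schwartz-norm of $\chi$. For $\tilde\upsilon_t$, the polynomial tail decay in \eqref{adaptxi} converts into the promised $K^{-Q}$-smallness: on $\supp(1-\chi_{K,t}) \subset \{|x| \geq Kt/2\}$ one has $(1+|x/t|)^{-(Q+1)} \lesssim K^{-(Q+1)}$, so using adaptation index $N+Q+1$ and absorbing the $K^Q$-prefactor yields $A_N(\tilde\Upsilon^{\xi}) \leq C_{Q,N} A_{N+Q+1}(\Phi^\xi)$.

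When $\Phi^\xi$ has mean zero, the spatial cutoff generally destroys \eqref{adaptxizero}, so a correction is needed. Choose $\rho \in C_c^\infty([-\tfrac12,\tfrac12])$ with $\int \rho = 1$ and let $\rho_t(x) := t^{-1}\rho(x/t)\e^{i\xi x}$; this defines a fixed $\xi$-adapted system with $\widehat{\rho_t}(\xi) = 1$ and $\supp \rho_t \subset [-t/2,t/2] \subset [-Kt,Kt]$. Setting $c_t := \widehat{\tilde\psi_t}(\xi)$, define
\[
\psi_t := \tilde\psi_t - c_t\rho_t, \qquad \upsilon_t := \tilde\upsilon_t - \widehat{\tilde\upsilon_t}(\xi)\rho_t.
\]
Both systems satisfy \eqref{adaptxizero}, the inclusion $\supp\psi_t \subset [-Kt,Kt]$ persists, and the identity $\phi_t = \psi_t + K^{-Q}\upsilon_t$ continues to hold because $\widehat{\phi_t}(\xi) = 0$ forces $c_t = -K^{-Q}\widehat{\tilde\upsilon_t}(\xi)$. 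The scalar corrections $c_t$ and $\widehat{\tilde\upsilon_t}(\xi)$ are each dominated by the $L^1$-norms of the corresponding modulated integrands, and hence by $C_{Q,N} A_{N+Q+1}(\Phi^\xi)$ via the pointwise decay estimates just established; since $\rho_t$ contributes only absolute constants to the adaptation norms, this preserves the target bounds on $\Psi^\xi$ and $\Upsilon^\xi$.

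The main obstacle is meeting the mean-zero constraint simultaneously with $\supp \psi_t \subset [-Kt,Kt]$ and with the $K^{-Q}$-smallness of the remainder. The compactly supported modulated bump $\rho_t$ with $\widehat{\rho_t}(\xi) = 1$ provides a one-dimensional correction that resolves both constraints at once, at the cost of inflating the adaptation indices only by $Q+1$ and introducing multiplicative constants $C_{Q,N}$.
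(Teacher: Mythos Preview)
Your proposal is correct and follows essentially the same approach as the paper: smooth spatial truncation at scale $Kt$, tail decay yielding the $K^{-Q}$ factor on the remainder, and a compactly supported mean-zero correction. The only cosmetic differences are that the paper first reduces to $\xi=0$ and $t=1$, and uses the rescaled cutoff $K^{-1}\beta_K$ itself (which has integral $1$) as the correcting bump, whereas you introduce a separate modulated bump $\rho_t$ at scale $t$; both devices accomplish the same one-dimensional Fourier correction.
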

\begin{proof}  We only prove the mean zero case, which is more involved. By replacing $\phi_t$ with  $\e^{-i\xi \cdot} \phi_t(\cdot)$, we   can reduce to the case $\xi=0$ and write ${{\Phi}} $ instead of ${{\Phi}^{0}}$. Note that, for any fixed value $t_0>0$, the system $$\left\{\widetilde{ \phi_t}:= \textstyle \frac{1}{t_0}  \phi_{\frac{t}{t_0}}\left( \textstyle  \frac{\cdot}{t_0}\right): t \in (0,t_0 T)\right\}$$ 
is $0$-adapted with the same constants as ${{\Phi}}$, 
 it suffices to produce the decomposition \eqref{lemmasplit0} for $t=1$. We write $\phi$ in place of $\phi_1$, and similarly for $\psi_1, \upsilon_1$.  
 
 Let $\beta $ be a smooth function satisfying 
\[
|\beta| \leq 1, \qquad \beta\equiv 1 \, \textrm{ on } \left[\textstyle -\frac12,\frac12\right],\qquad  \supp \beta \subset [-1,1] \qquad \int \beta=1.
\]
Let $K>1$. Define $\beta_{K}= \beta(\cdot/K)$. We record the obvious facts 
\begin{equation}
\label{lemmasplit1}
\begin{split}
&\supp \beta_{K} \subset [-K,K], \qquad  \supp (\cic{1}_\R- \beta_{K}) \subset \left[\textstyle -\frac{K}2,\frac{K}2\right]^c,\\
& \sup_{x \in \R} \left( \textstyle 1+\left| x \right|\right)^n \left|     \beta_K^{(m)}(x) \right| \leq C_{n,m} K^{n-m}
\end{split}
\end{equation}with constant $C_{n,m}$ depending only on $m,n \in \mathbb N$ through $\beta$'s derivatives. Define
$
I_{K}:= \int_\R \beta_{K} \phi\, \d x.
$
In view
 of the first line of \eqref{lemmasplit1}, since $\phi$ has mean zero, we have for all nonnegative integers $Q$
\begin{equation}
\label{lemmasplit2}
\begin{split}&\quad 
\left|I_{K} \right| = \left|  \int_\R( \cic{1}_\R - \beta_{K}) \phi\, \d x\right| \leq   \int_{|x|>\frac{K}{2}}| \phi|\, \d x \\ &  \leq   A_{Q+1}( {\Phi} )   \int_{|x|>\frac{K}{2}} (1+|x|)^{-(Q+1)} \, \d x \leq C_Q A_{Q+1}( {\Phi}^{\xi})  K^{-Q}.
\end{split}
\end{equation}
Fix now $K$ and $Q$. The decomposition \eqref{lemmasplit0} is achieved by setting 
\[
\begin{split}& K^{-Q}\upsilon:=\frac{I_{K}}{K} \beta_K + \eta_K, \qquad \eta_K:=(\cic{1}_\R-\beta_K) \phi, \\
& \psi:= \beta_K \phi - \frac{I_{K}}{K} \beta_K
\end{split}
\]
Indeed, $\psi $ is obviously supported in $[-K,K].$ Exploiting \eqref{lemmasplit1}, and \eqref{lemmasplit2} with $Q+ N $ in place of $Q$, we have for $ m,n\leq N$
\begin{equation}
\label{lemmasplit4}
\sup_{x \in \R} \left( \textstyle 1+\left| x \right|\right)^n \left|     \frac{I_{K}}{K} \beta_K^{(m)}(x) \right| \leq C_{Q,N} A_{Q+N+1}( {\Phi} )K^{-Q}.
\end{equation}
Furthermore, for  $m,n\leq N$, since $\beta_K^{(j)}$ is supported in $|x|>K/2$ for $j\geq 1$,
\begin{equation}
\label{lemmasplit5} \begin{split}&
  \left|       \eta_K^{(m)} (x) \right| \leq |(\cic{1}_\R-\beta_K)(x)| |\phi^{(m)}(x)|+\sum_{j=1}^{m}   {m \choose j} |\beta_K^{(j)}(x)||\phi^{(m-j)}(x)|\\ & \leq C_{Q,N}  A_{N+Q}( {\Phi} ) K^{-Q} \left( \textstyle 1+\left| x \right|\right)^{-n}.  \end{split}   \end{equation}
Combining \eqref{lemmasplit4}-\eqref{lemmasplit5}, we obtain the required adaptation bound   for $\upsilon$, and the one for $\psi$ follows by comparison. 
\end{proof}
Iterating the proof of the previous lemma we obtain the following decomposition of a $\xi$-adapted system ${\Phi}^{\xi}$ into $\xi$-adapted systems with compact support. The statement is a slightly more precise version of \cite[Lemma 3.1]{MPTT06}, to which we send for the proof details.
\begin{lemma}
\label{lemmasplititer}
Let ${\Phi}^{\xi}$ be a $\xi$-{adapted} system [resp.\ with mean zero] and $Q\in \mathbb N$.  There exists     $\xi$-adapted systems [resp.\ with mean zero] ${\Psi}^{\xi;k}=\{\psi_{t;k}\}$,  $k=0,1,2\ldots$   such that
\begin{align}\nonumber &
\phi_t = \sum_{k\geq 0} 2^{-Qk}\psi_{t;k} \qquad   \forall t \in (0,T),\\  &A_{N}( {\Psi}^{\xi;k}) \leq C_{Q,N} A_{Q+N+1}( {\Phi}^{\xi})\qquad N\in \mathbb N,  \label{lemmasplititer2}
\\ & \supp \psi_{t;k} \subset [-2^{k}t,2^{k}t] \qquad \forall t \in (0,T).\nonumber
\end{align}
\end{lemma}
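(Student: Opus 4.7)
The statement asks for a dyadic-in-$k$ refinement of Lemma \ref{lemmasplit}, in which at stage $k$ the compactly supported piece $\psi_{t;k}$ is allowed to live on the wider interval $[-2^k t, 2^k t]$ in exchange for a prefactor $2^{-Qk}$, while the adaptation constants remain bounded by $A_{Q+N+1}(\Phi^\xi)$ \emph{uniformly in $k$}. My plan is to produce the decomposition not by a brute-force recursion of Lemma \ref{lemmasplit} (which would accumulate derivative orders and multiplicative constants at each iteration), but by running the partition-of-unity argument from that proof once, at all dyadic scales simultaneously. Fix $\beta\in C_c^\infty(\R)$ with $\beta\equiv 1$ on $[-\tfrac12,\tfrac12]$, $\supp \beta\subset[-1,1]$, put $\beta_k(\cdot):=\beta(\cdot/2^k)$, and let $\gamma_0:=\beta_0$, $\gamma_k:=\beta_k-\beta_{k-1}$ for $k\geq 1$, so that $\sum_{k\geq 0}\gamma_k\equiv 1$, $\supp\gamma_k\subset[-2^k,2^k]$, and for $k\geq 1$ in fact $\gamma_k$ is supported in the annulus $\{|y|\geq 2^{k-2}\}$. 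Setting $\psi_{t;k}(x):=2^{Qk}\phi_t(x)\gamma_k(x/t)$ gives both the decomposition and the support condition of \eqref{lemmasplititer2} immediately.

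The key step is the uniform adaptation estimate. Writing $\tilde\phi_t:=\e^{-i\xi\cdot}\phi_t$, Leibniz's rule yields
\[
(\e^{-i\xi\cdot}\psi_{t;k})^{(m)}(x)=2^{Qk}\sum_{j=0}^m\binom{m}{j}\tilde\phi_t^{(m-j)}(x)\,t^{-j}\gamma_k^{(j)}(x/t).
\]
I would then apply the adaptation bound \eqref{adaptxi} for $\Phi^\xi$ \emph{at the higher order} $M=Q+N+1$ to obtain $|\tilde\phi_t^{(m-j)}(x)|\leq A_M(\Phi^\xi)\,t^{-(m-j+1)}(1+|x|/t)^{-M}$, combine with the standard scaling estimate $|\gamma_k^{(j)}|\lesssim 2^{-kj}$ on the annular support, and multiply by the test weight $t^{m+1}(1+|x|/t)^n$. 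For $m,n\leq N$ and on $\supp\gamma_k(\cdot/t)$ with $k\geq 1$, the annular condition $|x|/t\gtrsim 2^k$ converts $(1+|x|/t)^{n-M}$ into $2^{-k(M-n)}\leq 2^{-k(Q+1)}$, which absorbs the $2^{Qk}$ prefactor with room to spare. The $k=0$ term is controlled trivially since $|x|/t\lesssim 1$ there. This yields $A_N(\Psi^{\xi;k})\leq C_{Q,N}A_{Q+N+1}(\Phi^\xi)$ as required.

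The main obstacle, as anticipated, is the mean-zero case: the pieces $\psi_{t;k}$ as defined do not individually satisfy \eqref{adaptxizero}, only their weighted sum does. To handle this, I would imitate the renormalization step in the proof of Lemma \ref{lemmasplit}. Write $I_{t,k}:=\int\tilde\phi_t(x)\gamma_k(x/t)\,\d x$; the mean-zero hypothesis on $\phi_t$ gives $\sum_k I_{t,k}=0$, and the annular Schwartz decay yields $|I_{t,k}|\lesssim A_M(\Phi^\xi)\cdot 2^{-k(M-2)}$ for $k\geq 1$. Telescoping the identity $I_{t,k}=J_{t,k}-J_{t,k+1}$ with $J_{t,k}:=\sum_{j\geq k}I_{t,j}$, I would modify each piece by an additive correction of the form $c_{t,k}\rho_k(x/t)\e^{i\xi x}$, where $\rho_k$ is a fixed unit-integral bump supported in $[-2^k,2^k]$ and $c_{t,k}$ is chosen so that the corrected piece is mean zero at $\xi$. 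The smallness of the $I_{t,k}$ ensures that these corrections respect the support condition and the uniform adaptation bound, after which one verifies that the telescoping correction at level $k$ from $J_{t,k+1}$ cancels with the corresponding contribution at level $k+1$, so the total sum is unchanged. This delivers the mean-zero variant without any worsening of constants.
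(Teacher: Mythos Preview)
Your approach is correct and is in fact more direct than what the paper does. The paper does not spell out a proof of Lemma~\ref{lemmasplititer}: it simply says the result follows by ``iterating the proof of the previous lemma'' and refers the reader to \cite{MPTT06}. As you correctly observe, a naive recursion of Lemma~\ref{lemmasplit} as a black box would raise the derivative order by $Q+1$ at each step and is therefore not what is meant; one has to rerun the underlying cutoff argument at every dyadic scale. Your one-shot dyadic partition of unity $\gamma_k=\beta_k-\beta_{k-1}$ accomplishes exactly this, and your adaptation estimate (using order $M=Q+N+1$ decay of $\tilde\phi_t$ against the annular support $|x|/t\gtrsim 2^k$) is the right mechanism. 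This route has the advantage of making the uniform-in-$k$ bound on $A_N(\Psi^{\xi;k})$ completely transparent.

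One point to tighten: your description of the mean-zero renormalization is slightly inconsistent. You write the correction as a \emph{single} term $c_{t,k}\rho_k(\cdot/t)\e^{i\xi\cdot}$ chosen to kill the $\xi$-mean of $\psi_{t;k}$, and then assert that a telescoping in $J_{t,k}$ leaves the total sum unchanged. With one $k$-dependent bump per level the corrections do \emph{not} telescope. Two clean fixes are available. Either (i) use a single fixed bump $\rho$ supported in $[-1,1]$ for \emph{all} levels, so that the total correction is $-\big(\sum_k I_{t,k}\big)t^{-1}\rho(\cdot/t)\e^{i\xi\cdot}=0$ by the mean-zero hypothesis, and the adaptation bound follows from $2^{Qk}|I_{t,k}|\lesssim A_{Q+N+1}(\Phi^\xi)2^{-kN}$; or (ii) add \emph{two} bumps per level, namely $\big(J_{t,k+1}\rho_k-J_{t,k}\rho_{k-1}\big)\e^{i\xi\cdot}$ with $\rho_j$ supported in $[-2^j,2^j]$ and $J_{t,0}=0$, which has $\xi$-mean $-I_{t,k}$ and genuinely telescopes when summed over $k$. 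Option (i) is closer in spirit to the single-bump correction in the proof of Lemma~\ref{lemmasplit}. Either way, the gap is cosmetic and your overall strategy goes through.
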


\section{Carleson embeddings for tents revisited} \label{secembzero}
This section is dedicated to   Carleson embedding theorems in the outer $L^p(Z,\sigma, \mathsf{s}_q)$ spaces previously defined,  for the function
\begin{equation}
\label{Phixi}
{\Phi}^\xi (f) (u,t)= f* \phi_t(u), \qquad (u,t) \in Z
\end{equation}
where ${\Phi}^\xi=\{\phi_t: t \in (0,\infty) \}$ is a $\xi$-adapted system.   Throughout this section,   the absolute constant  $C$ appearing in  $A_{C}({\Phi}^\xi )$ is moderate; $C=20$ would suffice.

\subsection{Global-type Carleson embeddings} \label{ssglob}
The upcoming Carleson embedding theorems are very close in spirit to those given in \cite[Section 4]{DoThiele15}. The slight differences with \cite{DoThiele15} is that we do not work with dilates of a single function $\phi$ as in \eqref{dilmod}, and that our $\phi_t$ need not be compactly supported. Albeit these additions are minor, they will allow us to obtain generalized $L^\infty$ Carleson embeddings in Section \ref{secgentents} by simply averaging the results of this section.  
 \begin{proposition} \label{cemb} Let ${\Phi}^\xi$ be a $\xi$-adapted system. Then
\begin{align} &
\label{LpSinfty} \|{\Phi}^\xi (f) \|_{L^p(Z,\sigma,\mathsf{s}_\infty)} \lesssim A_{C}({\Phi}^\xi ) \|f\|_p, \qquad 1<p\leq \infty, 
\\ & \|{\Phi}^\xi (f) \|_{L^{1,\infty}(Z,\sigma,\mathsf{s}_\infty)} \lesssim A_{C}({\Phi}^\xi )  \|f\|_1.
\label{L1Sinfty}
\end{align}
If, in addition, ${\Phi}^\xi$  has mean zero,
\begin{align} &
\label{LpS2} \|{\Phi}^\xi (f) \|_{L^p(Z,\sigma,\mathsf{s}_2)} \lesssim A_{C}({\Phi}^\xi ) \textstyle  \|f\|_p, \qquad 1<p\leq \infty,
\\ & \|{\Phi}^\xi (f) \|_{L^{1,\infty}(Z,\sigma,\mathsf{s}_2)} \lesssim A_{C}({\Phi}^\xi )  \|f\|_1.
\label{L1S2}
\end{align}
\end{proposition}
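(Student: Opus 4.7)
The proof follows closely the template of \cite[Section 4]{DoThiele15}; the minor novelties, that $\phi_t$ need not be compactly supported nor a dilate of a single bump, are harmless once one observes that the adaptation condition \eqref{adaptxi} gives the uniform pointwise bound $|\phi_t(x)|\lesssim A_N({\Phi}^\xi)\,t^{-1}(1+|x|/t)^{-N}$ for every $N\in\mathbb{N}$. The plan is to first establish the $L^\infty$ endpoints (i.e.\ the $p=\infty$ cases of \eqref{LpSinfty} and \eqref{LpS2}) together with the weak endpoints \eqref{L1Sinfty} and \eqref{L1S2}, and then fill the range $1<p<\infty$ by the outer Marcinkiewicz interpolation of Proposition \ref{propint}.

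For the $\mathsf s_\infty$-embeddings no mean zero hypothesis is needed: the essential ingredient is the pointwise bound $|{\Phi}^\xi(f)(u,t)|\lesssim A_C({\Phi}^\xi)\,\mathrm{M}f(u)$, which is a direct consequence of the Schwartz tails of $\phi_t$. Together with $\mathrm{M}:L^\infty\to L^\infty$ this immediately yields \eqref{LpSinfty} with $p=\infty$. For the weak $L^1$ estimate \eqref{L1Sinfty} I would Calder\'on--Zygmund decompose $B:=\{x: \mathrm{M}f(x)>c\lambda/A_C({\Phi}^\xi)\}$ into maximal dyadic intervals $\{I_k\}$ with $\sum|I_k|=|B|\lesssim\|f\|_1/\lambda$, and take $E:=\bigcup_k\T(3I_k)$, an exceptional set of outer measure at most $3|B|$. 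Verifying that $|f*\phi_t(u)|\lesssim\lambda$ for all $(u,t)\notin E$ is immediate when $u\notin B$; when instead $u\in I_k$, the tent geometry forces $t\gtrsim|I_k|$, whereupon the convolution is controlled by $\mathrm{M}f(v)$ for a good point $v$ in the dyadic parent $\hat I_k\setminus B$ supplied by the Calder\'on--Zygmund maximality of $I_k$.

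The $\mathsf s_2$-embeddings exploit the mean zero assumption \eqref{adaptxizero}. For the $L^2$ endpoint of \eqref{LpS2}, one identifies $\|{\Phi}^\xi(f)\|_{L^2(Z,\sigma,\mathsf s_2)}^2$ with a constant multiple of $\int_Z|f*\phi_t(u)|^2\,du\,dt/t$ via Fubini, then invokes Lemma \ref{areaintegral}. The $L^\infty$ endpoint amounts to the Carleson measure inequality $\int_{\T(I)}|f*\phi_t(u)|^2\,du\,dt/t\lesssim|I|\|f\|_\infty^2$, which I would obtain by writing $f=f\cic{1}_{5I}+f\cic{1}_{(5I)^c}$: the local piece is handled by the $L^2$ endpoint just proved, while for the far piece the separation $\dist(u,\supp(f\cic{1}_{(5I)^c}))\gtrsim|I|\gtrsim t$, combined with the Schwartz decay of $\phi_t$, yields $|f\cic{1}_{(5I)^c}*\phi_t(u)|\lesssim (t/|I|)^{N}\|f\|_\infty$ for any $N$, which is more than summable against $du\,dt/t$ over $\T(I)$. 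Finally, for the weak $L^1$ estimate \eqref{L1S2}, I would run a Calder\'on--Zygmund decomposition $f=g+\sum_k b_k$ at height $\lambda$ with $\|g\|_2^2\lesssim\lambda\|f\|_1$ and mean zero bumps $b_k$ supported on $I_k$ satisfying $\|b_k\|_1\lesssim\lambda|I_k|$, $\sum|I_k|\lesssim\|f\|_1/\lambda$: the good part is treated by Chebyshev applied to the $L^2$ endpoint just proved, while the bad part is absorbed in $E:=\bigcup_k\T(3I_k)$, with the pointwise decay of $|b_k*\phi_t(u)|$ off $3I_k$ obtained by Taylor expanding $\phi_t(u-y)$ around $y=c(I_k)$ against the mean zero $b_k$, producing the gain $|I_k|/t$ (or any higher power) necessary for summability in $k$. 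I anticipate that the cleanest execution of this last step will be the main technical hurdle: one must juggle the three scales $|I|$, $|I_k|$ and $t$ in order to bound $\mathsf s_2\bigl({\Phi}^\xi(\textstyle\sum_k b_k)\cic{1}_{Z\setminus E}\bigr)(\T(I))\lesssim\lambda$ uniformly in $I$.
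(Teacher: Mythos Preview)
Your overall architecture is correct and close to the paper's. Two substantive differences are worth flagging.

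First, the paper does not handle the Schwartz tails of $\phi_t$ directly. Instead it reduces once and for all to the compactly supported case via Lemma~\ref{lemmasplititer} (see Remark~\ref{remcptsupp}): one writes $\phi_t=\sum_{k\geq 0}2^{-Qk}\psi_{t;k}$ with $\supp\psi_{t;k}\subset[-2^kt,2^kt]$, proves the embeddings for each compactly supported system with polynomial loss in $2^k$, and sums using the quasi-triangle inequality~\eqref{trianin}. Your direct approach (splitting $f=f\cic{1}_{5I}+f\cic{1}_{(5I)^c}$ for the $L^\infty$ endpoint, etc.) also works; the paper's reduction simply trades one piece of bookkeeping (Schwartz tails) for another (the decomposition lemma), and makes the subsequent localization and integration-by-parts steps a bit cleaner.

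Second, your treatment of the good part in the weak-$L^1$ argument via ``Chebyshev applied to the $L^2$ endpoint'' rests on the identification of $\|{\Phi}^\xi(f)\|_{L^2(Z,\sigma,\mathsf s_2)}^2$ with $\int_Z|f*\phi_t|^2\,\d u\,\d t/t$. This equivalence is true (both are the tent space $T^2_2$ norm) but it is \emph{not} obtained ``via Fubini'': the outer $L^2$ norm is defined through super-level outer measures, not as an integral, and the equality requires a genuine tent-space argument. The paper sidesteps this entirely by observing that $\|g\|_\infty\lesssim\lambda$ and invoking the already-established $L^\infty$ endpoint directly, which gives $\mathsf s_2({\Phi}(g))(\T(J))\lesssim\lambda$ for every $J$ without any Chebyshev step; see \eqref{L1S2cpt1}.

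Finally, for your anticipated ``main technical hurdle'' on the bad part, the paper's concrete execution is to bound $\mathsf s_\infty$ and $\mathsf s_1$ separately (one integration by parts each, using the mean zero of $b_I$) and then interpolate to $\mathsf s_2$; see \eqref{badpartinfty}--\eqref{L1S2cpt2}. This avoids having to balance the three scales $|I|,|I_k|,t$ inside a single $\mathsf s_2$ computation.
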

After a couple of remarks,  we provide the details of proof for \eqref{LpS2} and \eqref{L1S2}. The proofs of \eqref{LpSinfty} and \eqref{L1Sinfty} can be then easily readapted (and are almost immediate to begin with). In the upcoming remark, we reduce to compact support of $\phi_t\in \Phi^\xi$ by means of Lemma \ref{lemmasplititer}.
\begin{remark}
\label{remcptsupp} 
We claim that  the estimates of Proposition \ref{cemb} can be obtained by proving the corresponding version under the assumption that $\phi_t \in {\Phi}^\xi$ is supported in $[-Kt,Kt]$, with a bound depending  polynomially on $K$ (and on the adaptation constants of ${\Phi}^\xi$). For instance, \eqref{LpS2}-\eqref{L1S2} will follow from 
\begin{align} \label{LpS2cpt}&
\|{\Phi}^\xi (f) \|_{L^p(Z,\sigma,\mathsf{s}_2)} \lesssim \textstyle   K^2 A_{3}({\Phi}^\xi )\|f\|_p, \qquad 1<p\leq \infty, \\  \label{L1S2cpt} &  \|{\Phi}^\xi (f) \|_{L^{1,\infty}(Z,\sigma,\mathsf{s}_2)} \lesssim K^2    A_{3}({\Phi}^\xi )\|f\|_1
\end{align}
under the above compact support assumption. Let us detail how to derive \eqref{LpS2}. We apply the decomposition of Lemma \ref{lemmasplititer} for $Q=6$ say, and, referring to the notation therein, obtain the decomposition
\begin{equation} \label{spliteq}
{\Phi}^\xi (f)= \sum_{k\geq 0}2^{-6k}{\Psi}^{\xi;k}(f)
\end{equation}
Applying the  triangle inequality \eqref{trianin}, noting that $\supp \psi_{t;k} \subset[-2^{k}t,2^k t]$, and using \eqref{LpS2cpt}
\[
\begin{split}
\|{\Phi}^\xi (f) \|_{L^p(Z,\sigma,\mathsf{s}_2)} &\lesssim \sum_{k\geq 0}2^{-4k} \|{\Psi}^{\xi;k}(f) \|_{L^p(Z,\sigma,\mathsf{s}_2)}\lesssim     \|f\|_p\sum_{k\geq 0}2^{-2k}  A_{3}({\Psi}^{\xi;k} ) \\ & \lesssim  A_{C}({\Phi}^{\xi} )  \|f\|_p 
\end{split}
\]
which is \eqref{LpS2}.
\end{remark}
\begin{remark} \label{xizeroredux} Given a $\xi$-adapted system [resp.\ with mean zero] ${\Phi}^\xi=\{\phi_t\}$, the system ${\Phi}:= \{\e^{-i\xi\cdot} \phi_t(\cdot)\}$ is zero-adapted with same adaptation constants. Since
$$
|{\Phi}^\xi (f)|= |{\Phi} (f(\cdot) \e^{i\xi\cdot})|,
$$
we can reduce to the case $\xi=0$ when proving \eqref{LpS2cpt}-\eqref{L1S2cpt}. This is merely for notational convenience.
\end{remark}
\begin{proof}[Proof of \eqref{LpS2cpt}-\eqref{L1S2cpt}] 
We begin by proving the case $p=\infty$ of \eqref{LpS2cpt}. Let $I$ be any interval. We need to prove that
$$
\mathsf{s}_2({\Phi}(f)) (\T(I))^2= \frac{1}{|I|} \int_{\T(I)} |{\Phi}(f)(u,t)|^2 \, \frac{\d u \d t}{t}  \lesssim K^4 A_{3}({\Phi})^2 \| f\|_\infty^2.
$$
Due to  $\supp \phi_t\subset [-Kt,Kt]$, ${\Phi}(f{1}_{\R\setminus 3KI})(u,t)=0$ whenever $(u,t) \in \T(I)$. Thus using Lemma \ref{areaintegral},
\[
\begin{split}
\int_{\T(I)} |{\Phi}(f)(u,t)|^2 \, \frac{\d u \d t}{t} & = \int_{\T(I)} |{\Phi}(f{1}_{  3KI})(u,t)|^2 \, \frac{\d u \d t}{t} \leq \int_{Z} |{\Phi}(f{1}_{  3KI})(u,t)|^2 \, \frac{\d u \d t}{t} \\ & \lesssim A_{3}({\Phi})^2 \|f{1}_{  3KI}\|_2^2 
\lesssim  A_{3}({\Phi})^2 K |I| \| f\|_\infty^2,\end{split}
\]
which is (more than) what we sought.

Next we will prove \eqref{L1S2cpt}, and the remaining cases of \eqref{LpS2cpt} will follow from outer interpolation. We can assume, by linearity, that $\|f\|_1=1$. Given $\lambda>0$, let $I\in \cic{I}$ be the collection of  maximal dyadic intervals such that $I$ is contained in $\{\mathrm{M}f>\lambda\}$.  By the maximal theorem, the set $E= \bigcup_{I \in \cic I} \T(3I)$ has outer measure $\mu(E) \lesssim \lambda^{-1}$. It will then suffice to show that
\begin{equation}
\label{L1S2cpt0}
\sup_J
\mathsf{s}_2({\Phi}(f) \cic{1}_{Z\setminus E}) (\T(J)) \lesssim  A_{3}({\Phi}) K^2 \lambda
\end{equation}
Let $$
f=g+b= g+ \sum_{I \in \cic I} b_I
$$
be the Calder\'on-Zygmund decomposition of $f$ at level $\lambda$. Since $\|g\|_\infty\lesssim \lambda$, by the previous part of the proof we get
\begin{equation}
\label{L1S2cpt1}
\sup_J
\mathsf{s}_2({\Phi}(g) \cic{1}_{Z\setminus E}) (\T(J))  \leq \sup_J
\mathsf{s}_2({\Phi}(g) (\T(J)) \lesssim A_{3}({\Phi}) K \lambda.
\end{equation}
We now turn to bounding the contribution of $b$. Let $(u,t) \not \in \T(3I)$. By virtue of the support condition on $\phi_t$, 
$b_I * \phi_t(u)=0$ unless $t\geq K^{-1} |I|$. Letting $B_I$ denote the compactly, disjointly  supported primitives of $b_I$, we recall that
$$
\|B_I\|_\infty \lesssim \lambda |I|.
$$ Also, letting $\psi_t= (\phi_t)'$, we integrate by parts to obtain
\begin{equation}
\label{badpart1}|b\ast \phi_t(u)|= \left| \sum_{Kt\geq |I|} B_I \ast \psi_t(u)  \right|\leq  \left\|\sum_{Kt\geq |I|}t^{-1} B_I\right\|_\infty  \|t\psi_t\|_1  \lesssim A_{3}({\Phi}) K\lambda, \qquad (u,t) \not \in E.
\end{equation}
Thus
\begin{equation}
\label{badpartinfty}\sup_{J}\mathsf{s}_\infty({\Phi}(b) \cic{1}_{Z\setminus E}) (\T(J)) \lesssim KA_{3}({\Phi})  \lambda. \end{equation} Fix now $I \in \cic {I}$. Notice that ${\Phi}(b_I)=0$ on $\T(J)$ unless $I\subset 3KJ$. By the same support considerations and integrating by parts
\[
\begin{split} 
\int_{\T(J)\setminus E} |b_I\ast \phi_t(u)| \frac{\d u \d t}{t}&\leq  \int_{Kt\geq |I|} \int_{|u-c(I)|\leq 2Kt}\|B_I\|_1 \|\psi_t\|_\infty \frac{\d u \d t}{t} \\ &\lesssim K  A_{3}({\Phi})\lambda |I|^2  \int_{Kt\geq |I|} \frac{\d t}{t^2} \lesssim K^2  A_{3}({\Phi})\lambda |I|,
\end{split} 
\]
so that, summing over $I\subset 3KJ$ (which are pairwise disjoint, we obtain 
$$\mathsf{s}_1({\Phi}(b) \cic{1}_{Z\setminus E}) (\T(J)) \lesssim K^3  A_{3}({\Phi})\lambda. 
$$
Interpolating the above bound with \eqref{badpartinfty} it follows that 
\begin{equation}
\label{L1S2cpt2}
\sup_J
\mathsf{s}_2({\Phi}(b) \cic{1}_{Z\setminus E}) (\T(J))    \lesssim A_{3}({\Phi}) K^2 \lambda,
\end{equation}
which, combined with \eqref{L1S2cpt1}, completes the proof of \eqref{L1S2cpt0}, and, in turn, of \eqref{L1S2cpt}.
\end{proof}
\subsection{Localized embeddings} \label{ssloc}
 Let $\cic{I}$ be  a collection of intervals. The tent over $\cic{I}$ is defined as
\begin{equation}
\label{excset0}
E_{\cic{I}}= \bigcup_{I \in \cic{I}} \T(3I).\end{equation}
For technical reasons, we need to define $E_{\cic{I}}$ using the slightly enlarged tents $\T(3I)$. We warn the reader that this creates a discrepancy between the tent over an interval $I$, namely $\T(I)$,  and the tent over the family $\cic{I}=\{I\}$, which is $E_{\cic{I}}=\T(3I)$.
The next three results are  $L^\infty$ Carleson embeddings   outside or inside  $E_{\cic{I}}$-type sets.    
 \begin{lemma}
\label{loc1}
Let ${\Phi}^\xi $ be a $\xi$-adapted system. For   $f\in \mathcal S(\R)$, define
\begin{equation}
\label{excset}
  \cic{I}_{f,\lambda,1}:=\big\{\textrm{\emph{maximal dyadic intervals}  } I\subset \{x\in \R: \mathrm{M}f(x)>\lambda\}\big\}. 
\end{equation}
and, referring to \eqref{excset0}, write $E_{f,\lambda,1}:=E_{\cic{I}_{f,\lambda,1}}$. Then 
\[
\left\|{\Phi}^\xi (f) \cic{1}_{Z\setminus E_{f,\lambda,1}} \right\|_{L^\infty(Z,\sigma,\mathsf{s}_\infty)} \lesssim A_{C}({\Phi}^\xi )  \lambda.
\]
If furthermore ${\Phi}^\xi $ has mean zero
\begin{equation}
\label{loc12}\left\|{\Phi}^\xi (f) \cic{1}_{Z\setminus E_{f,\lambda,1}} \right\|_{L^\infty(Z,\sigma,\mathsf{s}_2)} \lesssim A_{C}({\Phi}^\xi )  \lambda. 
\end{equation}
\end{lemma}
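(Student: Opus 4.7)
The plan is to prove both estimates by combining the geometry of $E_{f,\lambda,1}$ with the kernel decay, respectively the smoothness, of the functions $\phi_t \in \Phi^\xi$.

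The heart of the argument is a geometric claim: there exists an absolute constant $C_0$ such that for every $(u,t) \in Z \setminus E_{f,\lambda,1}$ one can find $y \in F := \{\mathrm{M}f \leq \lambda\}$ with $|y-u|\leq C_0 t$. I verify this by contradiction: if no such $y$ exists then in particular $u \in \{\mathrm{M}f > \lambda\}$, so $u$ belongs to a unique maximal element $I \in \cic{I}_{f,\lambda,1}$. If $|I|>C_0 t/100$, inspection of the defining inequalities $t'<3|I|$ and $|u'-c(I)|<3|I|-t'$ for $\T(3I)$ shows that $(u,t)\in \T(3I) \subset E_{f,\lambda,1}$ for $C_0$ sufficiently large (any $C_0 \geq 40$ suffices), which is a contradiction. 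Otherwise $|I|\leq C_0 t/100$, and the dyadic parent $I'$ of $I$ cannot be contained in $\{\mathrm{M}f > \lambda\}$ by maximality of $I$; hence $I'$ contains a point $y \in F$ with $|y-u| \leq 2|I| \leq C_0 t/50$, again contradicting the standing hypothesis.

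Granted the geometric claim, the $\mathsf{s}_\infty$ bound is immediate: selecting such a $y$, the decay estimate $|\phi_t(z)|\leq A_2(\Phi^\xi)t^{-1}(1+|z|/t)^{-2}$ obtained from \eqref{adaptxi} together with the elementary comparison $(1+|u-z|/t)^{-2}\lesssim (1+|y-z|/t)^{-2}$ yields
\[
|\Phi^\xi(f)(u,t)| \lesssim A_2(\Phi^\xi) \int \frac{|f(z)|\,\d z}{t(1+|y-z|/t)^2} \lesssim A_2(\Phi^\xi)\, \mathrm{M}f(y) \leq A_2(\Phi^\xi)\lambda,
\]
proving the $\mathsf{s}_\infty$ bound uniformly over $(u,t)\in Z\setminus E_{f,\lambda,1}$.

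The $\mathsf{s}_2$ estimate exploits the mean zero assumption and the Calder\'on--Zygmund machinery already present in the proof of \eqref{L1S2cpt}. Following Remark \ref{remcptsupp}, the splitting of Lemma \ref{lemmasplititer} reduces matters to the case $\supp \phi_t \subset [-Kt,Kt]$, at the cost of a polynomial loss in $K$ which is harmless after summing the geometric series in the decomposition. I then perform the Calder\'on--Zygmund decomposition $f = g + \sum_{I\in \cic{I}_{f,\lambda,1}} b_I$ at level $\lambda$ using the stopping family $\cic{I}_{f,\lambda,1}$, with $\|g\|_\infty \lesssim \lambda$, $\int b_I=0$, $\supp b_I\subset I$, and primitive bound $\|B_I\|_\infty\lesssim \lambda|I|$. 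The good part is handled by the global estimate \eqref{LpS2} at $p=\infty$ applied to $g$, which uses the mean zero assumption and gives $\mathsf{s}_2(\Phi^\xi(g))(\T(J))\lesssim A_C(\Phi^\xi)\lambda$ uniformly in $J$. For the bad part, the computation carried out in the proof of \eqref{L1S2cpt}---the integration by parts $b_I\ast \phi_t = B_I \ast \phi_t'$, together with the support restriction $Kt\geq |I|$ for the non-vanishing summands at a point $(u,t)\notin \T(3I)$---delivers on $Z \setminus E_{f,\lambda,1}$ both a pointwise $\mathsf{s}_\infty$-bound and an $\mathsf{s}_1$-bound of the correct order in $\lambda$, which interpolate to the desired $\mathsf{s}_2$-bound.

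The only genuinely new ingredient is the geometric claim in the first paragraph; everything else is a direct transcription of arguments already displayed in the proof of \eqref{L1S2cpt}, with the exceptional set now prescribed a priori by $f$ and $\lambda$ instead of arising from an application of the maximal theorem. I do not expect any essential difficulty beyond the case analysis in the geometric claim.
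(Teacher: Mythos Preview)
Your proposal is correct. For the $\mathsf{s}_2$ estimate you follow the paper exactly: reduce to compact support via Lemma~\ref{lemmasplititer}, then run the Calder\'on--Zygmund decomposition of the proof of \eqref{L1S2cpt0}, noting that the exceptional set $E$ constructed there coincides with $E_{f,\lambda,1}$.

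The only deviation is in the $\mathsf{s}_\infty$ estimate. The paper implicitly handles this via the same Calder\'on--Zygmund decomposition (the good part is bounded pointwise by $\|g\|_\infty\|\phi_t\|_1\lesssim A_C\lambda$, and the bad part by the computation \eqref{badpart1}--\eqref{badpartinfty}, which requires no mean-zero hypothesis). Your route is instead a direct maximal-function argument based on the geometric claim that any $(u,t)\notin E_{f,\lambda,1}$ has a point of $\{\mathrm{M}f\leq\lambda\}$ within $C_0t$ of $u$. This is more elementary---no decomposition, no integration by parts, no compact-support reduction---and in fact one can sharpen it to $C_0=1$: if $u\in I\in\cic{I}_{f,\lambda,1}$ and $(u,t)\notin\T(3I)$, then necessarily $t\geq 5|I|/2$, so the dyadic parent of $I$ already furnishes the required $y$. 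The paper's approach has the advantage of treating both sizes uniformly with a single argument; yours isolates the $\mathsf{s}_\infty$ case as the genuinely trivial one.
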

\begin{proof} Perusal of the proof of \eqref{L1S2cpt0}, relying on the Calder\'on-Zygmund decomposition. The extension to the non-compactly supported case is then obtained via the same decomposition \eqref{spliteq} recalled in Remark \ref{remcptsupp}. 
\end{proof}
\begin{lemma}
\label{loc2}
Let ${\Phi}^\xi $ be a $\xi$-adapted system, $K\geq 1,$ and $Q$  be a positive integer. Let $\cic{J}$ be a finitely overlapping collection of intervals and $f\in \mathcal S(\R)$ be such that  \[
\label{suppfdoesnot}
\supp f \cap 27KJ = \emptyset \qquad \forall J \in \cic{J}.
\]
Then, referring to \eqref{excset0} for $E_{\cic{J}}$,
\begin{equation}
\label{loc21}
\left\|{\Phi}^\xi (f) \cic{1}_{  E_{\cic{J}}} \right\|_{L^\infty(Z,\sigma,\mathsf{s}_\infty)}\leq  C_{Q} A_{CQ}({\Phi}^\xi)  K^{-Q} \sup_{J \in \cic{J}} \inf_{x \in  J} \mathrm{M}f(x),
\end{equation}
and furthermore, if ${\Phi}^\xi $ has mean zero,
\begin{equation}
\label{loc22}
  \left\|{\Phi}^\xi (f) \cic{1}_{  E_{\cic{J}}} \right\|_{L^\infty(Z,\sigma,\mathsf{s}_2)} \leq C_Q A_{CQ}({\Phi}^\xi)   K^{-Q} \sup_{J \in \cic{J}} \inf_{x \in  J} \mathrm{M}f(x),
\end{equation}
\end{lemma}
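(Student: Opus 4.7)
The plan is to prove a pointwise estimate for $\Phi^{\xi}(f)(u,t)$ on each tent $\T(3J)$, $J\in \cic{J}$, carrying both a $K^{-Q}$ factor and an extra $(t/|J|)^{Q}$ decay in the aspect ratio. The claim \eqref{loc21} then follows by passing to the supremum, whereas \eqref{loc22} is obtained by squaring, integrating (the $(t/|J|)^Q$ factor removing the logarithmic divergence in $t$), and adding up the contributions from $J \in \cic{J}$ via the finite overlap property.

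Fix $J\in\cic{J}$ and $(u,t)\in\T(3J)$, so that $u\in 3J$ and $0<t<3|J|$. The hypothesis $\supp f\cap 27KJ=\emptyset$ together with $K\geq 1$ ensures, for every $x\in \supp f$,
\[
|u-x|\geq \tfrac{27K-3}{2}|J|\geq 12K|J|\geq 4Kt.
\]
The adaptation bound \eqref{adaptxi}, applied with $m=0$ and $n=Q+2$, reads $|\phi_t(y)|\leq A_{Q+2}(\Phi^\xi)\,t^{-1}(1+|y|/t)^{-(Q+2)}$ (the modulation by $\xi$ only affects the phase). Splitting the decay as
\[
(1+|u-x|/t)^{-(Q+2)}\leq (12K|J|/t)^{-Q}(1+|u-x|/t)^{-2}
\]
for $|u-x|/t\geq 12K|J|/t$, and using the standard estimate $t^{-1}\int |f(x)|(1+|u-x|/t)^{-2}\,\d x\lesssim \mathrm{M}f(u)$, we arrive at
\[
|\Phi^\xi(f)(u,t)|\leq C_Q A_{Q+2}(\Phi^\xi)\, K^{-Q}(t/|J|)^{Q}\mathrm{M}f(u).
\]
Since every ball around $u\in 3J$ meeting $\supp f$ has radius $\gtrsim K|J|$, such a ball is comparable to the ball of the same radius about any $x\in J$, whence $\mathrm{M}f(u)\lesssim \inf_{x\in J}\mathrm{M}f(x)$. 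Writing $M_0:=\sup_{J\in\cic{J}}\inf_{x\in J}\mathrm{M}f(x)$, the pointwise bound becomes
\[
|\Phi^\xi(f)(u,t)|\leq C_Q A_{Q+2}(\Phi^\xi)\, K^{-Q}(t/|J|)^{Q} M_0, \qquad (u,t)\in \T(3J).
\]

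The bound \eqref{loc21} follows by taking supremum over $(u,t)\in \T(3J)$ (where $(t/|J|)^Q\leq 3^Q$) and then over $J\in \cic{J}$, using the identity $\|F\cic{1}_{E_{\cic J}}\|_{L^{\infty}(Z,\sigma,\mathsf{s}_\infty)}=\sup_{(u,t)\in E_{\cic J}}|F(u,t)|$. For \eqref{loc22}, we square the pointwise bound and integrate over $\T(3J)$; since $\int_0^{3|J|}(t/|J|)^{2Q}\,\tfrac{\d t}{t}$ is finite and the $u$-marginal is $O(|J|)$,
\[
\int_{\T(3J)}|\Phi^\xi(f)|^{2}\,\tfrac{\d u\,\d t}{t}\lesssim C_Q A_{Q+2}(\Phi^\xi)^{2}\, K^{-2Q} M_0^{2}|J|.
\]
Given an interval $I$, we dominate $\cic{1}_{E_{\cic{J}}}\leq \sum_J \cic{1}_{\T(3J)}$ and restrict to $J$'s with $\T(3J)\cap \T(I)\neq \emptyset$, which in particular satisfy $3J\cap I\neq \emptyset$. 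Those with $|J|\leq |I|/3$ lie inside $5I$, so $\sum |J|\lesssim |I|$ by finite overlap. For those with $|J|>|I|/3$, restricting the integration to $\T(I)$ yields an extra factor $(|I|/|J|)^{2Q}$, and at each dyadic scale $|J|\sim 2^n$ the number of such $J$'s is $O(1)$ by finite overlap, so the series in $n\geq \log_2(|I|/3)$ converges geometrically. Combining both cases gives \eqref{loc22} with the predicted constants.

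The main obstacle is extracting the extra $(t/|J|)^{Q}$ decay in the pointwise bound, which is essential to avoid a logarithmic divergence in the $\mathsf{s}_2$ integration; the naive bound from the adaptation would only produce the $K^{-Q}$ factor. The subsequent combinatorial decomposition over scales of $|J|$ relative to $|I|$ is then routine. We remark that mean zero of $\Phi^\xi$ plays no essential role in the argument above; the hypothesis in \eqref{loc22} is retained to match the setting in which $\mathsf{s}_2$ is typically finite.
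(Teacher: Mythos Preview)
Your argument is correct and proceeds quite differently from the paper's. The paper first reduces, via the finite overlap property, to controlling $\mathsf{s}_2(\Phi^\xi(f))(\T(L))$ for $L\subset 9J$ with $J\in\cic{J}$; it then splits $\phi_t=\psi_t+K^{-Q}\upsilon_t$ using Lemma~\ref{lemmasplit}, notes that the compactly supported piece $\Psi^\xi(f)$ vanishes on $\T(L)$ by the support separation, and handles the $K^{-Q}\Upsilon^\xi(f)$ tail by invoking Lemma~\ref{loc1}, which in turn rests on a Calder\'on--Zygmund decomposition. Your route is more direct: you extract both the $K^{-Q}$ gain and an additional $(t/|J|)^Q$ aspect-ratio decay straight from the adaptation bound, and then perform the $\mathsf{s}_2$ integration and the sum over $J$ explicitly, splitting by the relative scale of $J$ and $I$. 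This buys you a more elementary, self-contained proof that bypasses both the splitting lemma and the Calder\'on--Zygmund machinery of Lemma~\ref{loc1}; as you correctly note, it also shows that the mean-zero hypothesis in~\eqref{loc22} is not actually needed. The paper's approach, by contrast, is more modular---it recycles Lemma~\ref{loc1} as a black box and keeps the structural decomposition of adapted systems (Lemma~\ref{lemmasplit}) visible---which is convenient since that same toolkit is reused elsewhere, notably in Lemma~\ref{loc3} and in the passage to generalized tents.
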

\begin{proof} We prove \eqref{loc22}, the proof of \eqref{loc21} being exactly the same. 
First of all, we notice that 
\begin{equation} \label{loc221}
\left\|{\Phi}^\xi (f) \cic{1}_{  E_{\cic{J}}} \right\|_{L^\infty(Z,\sigma,\mathsf{s}_2)}=   \sup_{L \subset \R} \mathsf{s}_2({\Phi}^\xi(f) \cic{1}_{  E_{\cic{J}}}) (\T(L))  \leq C \sup_{J \in \cic{J}} \sup_{L \subset 9J} \mathsf{s}_2({\Phi}^\xi(f)) (\T(L)).\end{equation}
This is because if  $\T(L) $   intersects $ E_{\cic{J}} $ nontrivially, and there is no $J\in \cic{J}$ with $L\subset 9J $, it must be that $3J \subset  3L$ whenever $J\in\cic{J} $ is such that $ 3J  \cap L\neq \emptyset. $ Therefore 
$$
E_{\cic{J}} \cap \mathsf{T}(L) \subset \bigcup_{J \in \cic{J}: J\subset 3L} \mathsf{T}(3J),  
$$
whence
$$
\left(\mathsf{s}_2({\Phi}^\xi(f) \cic{1}_{  E_{\cic{J}}}) (\T(L))\right)^2 \leq \frac{1}{|L|} \sum_{J \in \cic{J}: J\subset 3L } |3J| \left(\mathsf{s}_2({\Phi}^\xi(f) ) (\T(3J))\right)^2 
$$
 which, by finite overlap of $J \in \cic{J}$  is less than  $C$ times the supremum on the right hand side of \eqref{loc221}. 
We will now bound the right hand side of \eqref{loc221}.  Fix $J \in \cic{J}$ and $L\subset 9J$, and let $$
\lambda= 9  \inf_{x \in  J} \mathrm{M}f(x).$$ Construct  $\cic{I}_{f,\lambda,1}$ and $E_{f,\lambda,1}$ as in \eqref{excset}. Since $27J\cap \supp f=\emptyset$, it is easy to see that $$
9J\cap 3H =\emptyset  \qquad \textrm{for all intervals } H \textrm{ with } \int_H |f| \, \d x >\lambda |H|,$$
which in turn   implies  \begin{equation}
\label{Tellcap}  \T(L)\cap E_{f,\lambda,1}=\emptyset.
\end{equation}
 From Lemma \ref{lemmasplit}, there  exists two   $\xi$-adapted systems  with mean zero ${\Psi}^{\xi}=\{\psi_t\}, {\Upsilon}^{\xi}=\{\upsilon_t\}$, such that for all $t$
$$
\phi_t = \psi_t + K^{-Q} \upsilon_t, \qquad \supp \psi_t 
\subset [-Kt,Kt].
$$
 Since $\dist (L,\supp f) \geq 9K |J|\geq K|L|$, the support of $(u,t)\mapsto {\Psi}^\xi(f)(u,t) $ does not intersect $T(L)$. Therefore
\[
{\Phi}^\xi(f) (u,t) = K^{-Q} {\Upsilon}^{\xi}(f) (u,t) = K^{-Q} {\Upsilon}^{\xi}(f) \cic{1}_{Z\setminus E_{f,\lambda,1}} (u,t), \qquad (u,t) \in \T(L),
\]
where the second equality is a consequence of \eqref{Tellcap}. By \eqref{loc12}, 
$$
  \mathsf{s}_2({\Phi}^\xi(f)) (\T(L)) \leq K^{-Q}
 \left\|{\Upsilon}^\xi(f)   \cic{1}_{Z\setminus E_{f,\lambda,1}} \right\|_{L^\infty(Z,\sigma,\mathsf{s}_2)} \lesssim  K^{-Q} A_{C}({\Upsilon}^\xi) \lambda \leq C_Q A_{CQ}({\Phi}^\xi) K^{-Q} \lambda,
$$
relying on \eqref{lemmasplititer2} to get the last equality. This proves the claimed bound of \eqref{loc221}.\end{proof}

The final lemma of this section is a strengthening of Lemma \ref{loc1}  in the case where the function involved    is a sum of Calder\'on-Zygmund atoms with $  Q$ vanishing moments. This produces an exponential gain in  $Q$.
\begin{lemma} \label{loc3}  Let $\xi \in \R$  and let $\cic{L}$
be a countable collection of  intervals. 
Let $b\in L^1_{\mathrm{loc}}(\R)$ be such that 
 \begin{align} & b = \sum_{L \in \cic{L}} b_L, \qquad \label{finovBMO}  \supp b_L \subset L ,  \qquad \sup_{I \subset \R} \frac{1}{|I|} \sum_{L \subset I} \|b_L\|_1 =:
\lambda<\infty
  \\ &  \label{vanishing1}\int b_L(x)  x^j \e^{ - i \xi x } \, \d x =0 \qquad \forall j=0,\ldots, 2(Q+1).\end{align}
 Let  $K>3$  be given and denote by $K\cic{L}=\{KL: L \in \cic{L}\}$.
Let ${\Phi}^{\xi}$ be a  $\xi$-adapted system [resp.\ with mean zero]. Then, for  $p =\infty$ [resp.  $p=2$],
\begin{equation}
\label{estsize}
\left\|{\Phi^\xi}(b) \cic{1}_{Z\setminus {E_{K\cic{L}}}}\right\|_{L^\infty(Z,\sigma,\mathsf{s}_p)}
\leq   C_{\cic{L}}  C_Q A_{CQ}({\Phi}^\xi ) K^{-Q} \lambda.
\end{equation}
 \end{lemma}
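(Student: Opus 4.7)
The plan has three stages. First, standard reductions: by Remark \ref{xizeroredux} I may assume $\xi=0$, reducing \eqref{vanishing1} to the classical $\int b_L(x)x^j\,\d x=0$ for $j=0,\ldots,2(Q+1)$; and by applying Lemma \ref{lemmasplititer} together with the quasi-triangle inequality \eqref{trianin} and a geometric sum as in Remark \ref{remcptsupp}, I may further assume that each $\phi_t\in\Phi$ is supported in $[-K't,K't]$ for some moderate $K'$.

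The heart of the argument is a pointwise bound $|b*\phi_t(u)|\lesssim A_{CQ}(\Phi)K^{-Q}\lambda$ on $Z\setminus E_{K\cic{L}}$, which immediately gives the $p=\infty$ case of \eqref{estsize}. Fix such $(u,t)$; by hypothesis $(u,t)\notin\T(3KL)$ for every $L\in\cic{L}$. Writing $b*\phi_t(u)=\sum_L b_L*\phi_t(u)$, I would estimate each term by case analysis. In the wide case $t\geq 3K|L|$, integrating by parts $2Q+3$ times against the $(2Q+3)$-fold primitive $\tilde B_L$ of $b_L$ (supported in $L$ with $\|\tilde B_L\|_\infty\lesssim \|b_L\|_1|L|^{2Q+2}$ by the vanishing moments \eqref{vanishing1}) yields the Taylor-type bound
\[
|b_L*\phi_t(u)|\lesssim A_{CQ}\|b_L\|_1\, t^{-1}(|L|/t)^{2Q+3}(1+|u-c(L)|/t)^{-N},
\]
and $|L|/t\leq(3K)^{-1}$ produces an immediate factor $K^{-(2Q+3)}$. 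In the narrow case $t<3K|L|$, the exclusion forces $|u-c(L)|\geq 3K|L|-t$; when $t\leq|L|$ this gives $|u-c(L)|/t\gtrsim K$, and the exponent split $(1+|u-c(L)|/t)^{-N}=(1+|u-c(L)|/t)^{-Q}(1+|u-c(L)|/t)^{-(N-Q)}$ combined with raw decay of $\phi_t$ extracts a $K^{-Q}$ factor, while for $|L|\leq t<3K|L|$ the Taylor bound combined with a similar exponent split on the distance decay yields $K^{-Q}$ as well. In every subcase one obtains a uniform per-$L$ estimate
\[
|b_L*\phi_t(u)|\lesssim A_{CQ}\|b_L\|_1 K^{-Q}t^{-1}(1+|u-c(L)|/t)^{-(Q+2)},
\]
and grouping the $L$'s into dyadic annuli $|u-c(L)|\in[2^m t,2^{m+1}t]$ for $m\geq 0$ (plus a central group for $L\ni u$, which by the geometry must satisfy $|L|\lesssim t/K$), applying \eqref{finovBMO} on the enclosing ball of radius $\sim 2^m t$ so that $\sum\|b_L\|_1\lesssim\lambda\cdot 2^m t$ per annulus, and summing a geometric series in $m$ delivers the pointwise bound.

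For the $p=2$ bound, assuming $\Phi^\xi$ has mean zero, I would use the elementary interpolation $\mathsf{s}_2(F)(\T(I))^2\leq \mathsf{s}_\infty(F)(\T(I))\mathsf{s}_1(F)(\T(I))$. The $\mathsf{s}_\infty$ factor is handled by the previous paragraph; for $\mathsf{s}_1$, I would integrate the per-$L$ pointwise estimate over $\T(I)\setminus\T(3KL)$ against $\tfrac{\d u\,\d t}{t}$, the Taylor factor $(|L|/t)^{2Q+3}$ providing the required integrability in $t$ near $0$, and sum via \eqref{finovBMO} with the finite overlap or total size of the collection $\cic{L}$ absorbed into the constant $C_{\cic{L}}$. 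The delicate technical point throughout is the intermediate regime $|L|\leq t<3K|L|$ in the pointwise step, where neither scale separation alone (which would yield $K^{-Q}$ from the Taylor factor) nor distance dominance alone (which would yield $K^{-Q}$ from decay of $\phi_t$) suffices; there one must carefully interpolate between the two sources of smallness via an exponent split that simultaneously produces the $K^{-Q}$ gain and preserves enough polynomial decay in $|u-c(L)|/t$ to support the summation under the BMO-packing condition.
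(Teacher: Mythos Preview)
Your approach is workable but differs from the paper's in a way worth noting. The paper does \emph{not} reduce to compact support via Lemma \ref{lemmasplititer}. Instead it applies the single-split Lemma \ref{lemmasplit} with parameter $\kappa=\sqrt{K}$, writing $\phi_t=\psi_t+K^{-Q}\upsilon_t$ where $\supp\psi_t\subset[-\sqrt{K}t,\sqrt{K}t]$. For the tail $\upsilon_t$ the paper uses the key observation of Remark \ref{themftrick}, namely $E_{b,3\lambda,1}\subset E_{K\cic{L}}$, so that Lemma \ref{loc1} gives $\mathsf{s}_2(\Upsilon(b)\cic{1}_{Z\setminus E_{K\cic{L}}})\lesssim\lambda$ outright; the prefactor $K^{-Q}$ supplies the gain. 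For the compactly supported $\psi_t$, the $\sqrt{K}$-support combined with the exclusion forces $t\gtrsim\sqrt{K}|L|$, so only your wide case survives: $2(Q+1)$ integrations by parts give $(|L|/t)^{2Q+1}\lesssim\kappa^{-2Q}=K^{-Q}$, and the intermediate regime simply does not appear.

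Your direct case analysis is also legitimate, and in fact your reduction via Lemma \ref{lemmasplititer} is a red herring: your pointwise argument uses only the Schwartz-type adaptation bounds and would go through for the original $\phi_t$. The trade-off is that your scheme is more laborious. You must execute the exponent-splitting in the intermediate regime carefully (which you acknowledge), and for the $\mathsf{s}_1$ bound you must also control the sum over $L$ with $c(L)$ far from the tent $\T(I)$ via the residual decay $(1+|u-c(L)|/t)^{-(Q+2)}$, since without compact support every $L$ contributes; the paper avoids this by the support restriction \eqref{loc35}, which localizes the sum to $L\subset 9\kappa J$. Both points are handleable by your annulus decomposition, but the paper's $\sqrt{K}$ split plus the maximal-function trick of Remark \ref{themftrick} bypasses them entirely.
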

 \begin{remark}\label{suffcond} Note that \eqref{finovBMO}, with $C\lambda$ in place of $\lambda$, follows if it is known that 
 \begin{equation}
\label{finov}  \supp b_L \subset L, \qquad 
 \sup_{L\in \cic{L}}\frac{1}{|L|}\|b_L\|_1 \leq \lambda, \qquad  
\sup_{I\in \R} \frac{1}{|I|} \sum_{\substack{L \in \cic{L}\\ L \subset I}}|L| \leq C.
\end{equation}
 \end{remark}
 \begin{remark} \label{themftrick} Before the actual proof, we   record that, as a consequence of \eqref{finovBMO}, there holds
 \[
I \not \subset \widetilde E:=\bigcup_{L \in \cic{L} } 3L \implies  \|b\cic{1}_I\|_1 \leq      \sum_{\substack{L \in \cic{L}\\ L \subset 3I}}   \|b_L\|_1 \leq 3 \lambda  |I|
   \]
   which in turn implies that
 $
    \{x \in \R: \mathrm{M} b(x)>3 \lambda \}\subset \widetilde E
   $. Namely, referring to the notations below \eqref{excset} and in \eqref{excset0}, is that if $I \in  \cic{I}_{b, 3 \lambda, 1}$ then $3I$ is covered by the intervals $\{9L: L\in \cic{L}\}$ whence
   \begin{equation}
\label{themftrick2}
E_{b, 3 \lambda, 1} \subset E_{ 3\cic{L} }  \subset E_{ K \cic{L} } .
\end{equation}
 \end{remark}
 \begin{proof}[Proof of Lemma \ref{loc3}] We prove the mean zero case. The proof of the other one is actually within this case. To lessen the notational burden, one may reduce to the case $\xi=0$ by the same argument as in Remark \ref{xizeroredux} (i.\ e.\ replacing $b(\cdot)$ by $b(\cdot) \e^{i\xi \cdot}$), and simply call  ${\Phi}$ the 0-adapted system with mean zero.

 We begin the actual proof. Clearly, for \eqref{estsize} it suffices to estimate $\mathsf{s}_2({{\Phi}}(b) \cic{1}_{Z\setminus E_{ K \cic{L} }})(\mathsf{T}(J))$ for  those intervals $J$ with  $\mathsf T(J) \not \subset   E_{ K \cic{L} }$. 
Fix one such $J$ from now on and let $\kappa=\sqrt{K}$.
We   apply   Lemma \ref{lemmasplit} with $\kappa$ in place of $K$ and $2Q$ replacing $Q$, to find   adapted systems  with mean zero ${\Psi} =\{\psi_t\}, {\Upsilon}=\{\upsilon_t\}$, such that
\begin{equation} \label{loc32}
\phi_t = \psi_t + K^{-Q} \upsilon_t, \qquad \supp \psi_t 
\subset [-\kappa t,\kappa t].
\end{equation}
Using \eqref{themftrick2} for the first inequality, we apply the  estimate \eqref{loc12} from Lemma \ref{loc1}, and subsequently  \eqref{lemmasplititer2}, yielding \begin{equation} \label{loc33}
 \mathsf{s}_{2}\left({{\Upsilon}}(b) \cic{1}_{Z\setminus E_{K\cic{L}}} \right) (\mathsf{T}(J))\leq   \mathsf{s}_{2}\left({{\Upsilon}}(b) \cic{1}_{Z\setminus E_{b, 3\lambda, 1}} \right) (\mathsf{T}(J)) \lesssim  A_{C}({\Upsilon} )\lambda   \leq   C_Q A_{CQ}({\Phi}^\xi )  \lambda.   
\end{equation}
The treatment of ${\Psi}(b) $ is very similar to the proof of \eqref{L1S2cpt2}, the difference being that  $Q$ integration by parts are performed. 
We make two important observations. The first is that, in view of the support of $\psi_t$ being as in \eqref{loc32},
\begin{equation}
\label{loc34} (u,t) \not \in \T(3KL) \implies {\Psi}(b_L)(u,t) = 0 \textrm{ unless } t\geq \kappa |L|, \, L \subset (u-3\kappa t,  u+3\kappa t)
\end{equation}
The second, by the same reasons, is that 
\begin{equation}
\label{loc35} (u,t) \in \T(J) \implies {\Psi}(b_L)(u,t) = 0 \textrm{ unless } L\subset 9\kappa J
\end{equation}
Iteratively define
$$
b_{L,0}:= b_L, \quad b_{L,j+1}:= \int b_{L,j} (x) \,\d x,\quad j=0,\ldots 2Q+1, \qquad B_L= b_{L,2(Q+1)}.
$$
The    moment conditions \eqref{vanishing1} ensure that  $B_L$ are   supported on $L$ and 
\begin{equation}
\label{loc36}
\|B_L\|_\infty \leq   |L|^{2Q+1}\|b_L\|_1.
\end{equation}
Let now $\varphi_t= (\psi_t)^{(2Q+2)}$. Similarly to \eqref{badpart1},  we  integrate by parts $2(Q+1)$ times to obtain
\[\begin{split}
|{\Psi}(b)(u,t)|&\leq   \sum_{ \substack{ t \geq \kappa |L|\\ L \subset (u-3\kappa t,  u+3\kappa t)}} \left|B_L \ast \varphi_t(u)  \right|\leq  \sum_{ \substack{ t \geq \kappa |L|\\ L \subset (u-3\kappa t,  u+3\kappa t)}}t^{-2(Q+1)} \left\|B_L\right\|_\infty  \|t^{2(Q+1)}\varphi_t\|_1 \\& \leq \kappa^{-2Q} (\kappa t)^{-1} \left(\sum_{ \substack{   L \subset (u-3\kappa t,  u+3\kappa t)}} \|b_L\|_1  \right)  A_{2(Q+2)}({\Psi})   \lesssim A_{2(Q+2)}({\Psi}) \kappa ^{-2Q} \lambda ,
\end{split}
\]
whenever  $ (u,t) \not \in {E_{K \cic{L}}}.$
Note that the restriction in the first sum is due to \eqref{loc34} and we have used it, together with \eqref{loc36}, to pull out a $\kappa^{-2Q-1}$ factor. The last inequality is obtained thanks to  \eqref{finovBMO}.
Therefore \begin{equation}
\label{loc37}
\mathsf{s}_\infty({\Phi}(b) \cic{1}_{Z\setminus E_{K\cic{L}}}) (\T(J)) \lesssim     A_{2(Q+2)}({\Psi}) \kappa ^{-2Q}\lambda \leq  C_Q A_{CQ}({\Phi}  ) K^{-Q} \lambda.\end{equation}
Note that \eqref{loc36} entails $\|B_L\|_1\leq |L|^{2Q+2}\|b_L\|_1$. Making use of the  observation \eqref{loc35}, another  integration by parts yields
\[
\begin{split} 
&\quad \int_{\T(J)\setminus {E_{K \cic{L}}}} |b_L\ast \psi_t(u)| \frac{\d u \d t}{t}\leq  \int_{t\geq \kappa |L|} \int_{|u-c(L)|\leq \kappa t}  \|B_L\|_1 \|t^{2Q+3}\varphi_t\|_\infty \frac{\d u \d t}{t^{2Q+4}} \\ &\lesssim  A_{2(Q+2)}({\Psi})  |L|^{2Q+2}\|b_L\|_1  \int_{t\geq \kappa |L|} \frac{\kappa \d t}{t^{2Q+3}} \lesssim      A_{2(Q+2)}({\Psi}) \kappa^{-2Q-1}  \|b_L\|_1.
\end{split} 
\]
Dividing by $|J|$, summing over   $L\subset 9\kappa J$, and using again the assumption \eqref{finovBMO}  we obtain, also in view of \eqref{lemmasplititer2}, the inequality
\begin{equation}
\label{loc38}\mathsf{s}_1({\Psi}(b) \cic{1}_{Z\setminus {E_{K \cic{L}}}}) (\T(J)) \leq  C_Q A_{CQ}({\Phi}^\xi ) K^{-Q} \lambda
\end{equation}
An interpolation between \eqref{loc38} and \eqref{loc37} yields
\begin{equation}
\label{loc39}
\mathsf{s}_2({\Psi}(b) \cic{1}_{Z\setminus E}) (\T(J))  \leq  C_Q A_{CQ}({\Phi}^\xi ) K^{-Q} \lambda
\end{equation}
and combining \eqref{loc39} with \eqref{loc33} and the decomposition \eqref{loc32} finishes the proof of the claimed estimate.
 \end{proof}
 
 \begin{remark} Lemmata \ref{loc1} and \ref{loc2} are outer measure analogues of two usual time-fre\-quen\-cy analysis lemmata, the John-Nirenberg inequality (see for example \cite[Proposition 2.4.1]{ThWp}) and the localization trick, used for instance to estimate the contribution of the part of the model operator localized inside the exceptional set.  Lemma \ref{loc3}, on the other hand, has no close discrete analogue. Its purpose is to control the bad part arising in the multi-frequency CZ decomposition which will be used in the proof of Theorem \ref{bademb}.
 \end{remark}

 \section{Proofs of Theorem \ref{bademb} and Proposition \ref{goodemb}} \label{secgentents}

  In the upcoming subsection, we prove Proposition \ref{goodemb}, which will follow directly from the embeddings of Section \ref{secembzero}. The remainder of the section is devoted to the proof of Theorem \ref{bademb}. Subsection \ref{ss72} contains the preliminary tools leading to the main argument in Subsection \ref{pfcet}, which in turn reduces \eqref{cet} from Theorem \ref{bademb} to the multi-frequency Calder\'on-Zygmund  Lemma \ref{lemmaCZ}. The proof of Lemma \ref{lemmaCZ} is postponed to Section \ref{sec8}.

\subsection{Proof of Proposition \ref{goodemb}} 
We begin with a remark linking outer measure spaces on classical tents with their  generalized  counterpart. To highlight the dependence of this observation on the geometric parameters $\cic{\alpha}$, we go back to explicitly writing the subscript $\cic{\alpha}$ till the end of the subsection. Let $F\in \mathcal B(\mathcal Z)$ and define 
$$
 F_{\xi,\theta}\in \mathcal B(Z), \qquad  F_{\xi,\theta}(u,t):= F\left(u,t,\xi+\textstyle\frac{\theta-\beta}{\alpha t}\right).
$$  Comparing  the definitions of \eqref{size} and \eqref{size0},
a change of variables entails
\[
\begin{split}&
\mathsf{s}_2(F) (\T_{\cic{\alpha}}(I,\xi))^2=\alpha^{-1}\int_{\Theta_{\cic{\alpha}}} \Big( \mathsf{s}_2\left(F_{\xi,\theta}
\right)(\T_{\cic{\alpha}}(I))\Big)^2 \, \d \theta, \qquad \Theta_{\cic \alpha}:=\{|\theta|<1, |\theta-\beta|>\alpha \delta\},  \\
& \mathsf{s}_\infty(F) (\T_{\cic{\alpha}}(I,\xi)) = \sup_{|\theta |< 1} \left(\mathsf{s}_\infty\left(F_{\xi,\theta}\right)(\T_{\cic{\alpha}}(I))\right) 
\end{split}
\]
in consequence of which, if $G\subset  Z $ is a Borel set  \begin{equation}
\label{sizeaver}
\| F \cic{1}_{G\times \R} \|_{L^\infty(\mathcal Z, \sigma_{\cic \alpha}, \mathsf{s})  } \leq\sup_{ \xi \in \R} \left( \sup_{|\theta| < 1 } \| F_{\xi,\theta} \cic{1}_G \|_{L^\infty( Z, \sigma_{\cic \alpha}, \mathsf{s}_\infty)  } +  \alpha^{-\frac12} \sup_{\theta \in \Theta_\alpha} \| F_{\xi,\theta} \cic{1}_G\|_{L^\infty( Z, \sigma_{\cic \alpha}, \mathsf{s}_2)  } \right).
\end{equation}
 In relation to \eqref{sizeaver}, and recalling that $F_\phi(f)(u,t,\eta)=f*\phi_{t,\eta}(u)$ as well as the notation \eqref{Phixi}, one has the equality
\begin{equation}
\label{dilmodgen1}
 \left(F_\phi(f)\right)_{\xi,\theta} = {\Phi}^{\xi;\theta}(f), \qquad \xi \in \R, \,|\theta| < 1
\end{equation}
where
\[
{\Phi}^{\xi;\theta}=\left\{ \phi_{t,\eta(\theta)}: t \in (0,\infty)\right\}, \qquad\eta(\theta):= \xi+ t^{-1}\frac{\theta-\beta }{\alpha}
\]
are $\xi$-adapted systems with adaptation    
constants 
$$
A_{N}({\Phi}^{\xi;\theta}) \leq C_N\alpha^{-CN} \mathcal{S}_{N}(\phi).
$$ In particular, these adaptation constants are uniform in $\theta \in (-1,1)$. If one further restricts to $\Theta_{\alpha}$, we obtain a $\xi$-adapted system  with mean zero as well. 

We will now use these observations in the proof of Proposition \ref{goodemb}. Assume $f$ and $\cic{J}$ satisfy \eqref{goodemb3a} for some  $K>1$. It suffices, by virtue of \eqref{goodemb1}, to argue for large $K$.  In view of    \eqref{sizeaver} and \eqref{dilmodgen1}, and  the uniform adaptedness of ${\Phi}^{\xi;\theta}$,  an application of the estimates  \eqref{loc21} and \eqref{loc22} from Lemma \ref{loc2} for a slightly smaller  $K$ and for some $Q>1$,  entail
\begin{equation}
\label{goodemb3b}
\|F_{\phi } (f)\cic{1}_{E_{\cic{J}}}\|_{L^{\infty}( \mathcal Z, \sigma_{\cic{\alpha}},\mathsf{s} )} \leq  C_{{\alpha},\phi,Q} K^{-Q}  \sup_{J \in \cic{J}} \inf_{x \in  J} \mathrm{M}f(x) .\end{equation}
The supremum on the right hand side is obviously bounded by $\|f\|_\infty$, whence  \eqref{goodemb3} for $q=\infty$, $Q=R$.   To obtain the case $2<q<\infty$, we use outer Marcinkiewicz interpolation between \eqref{goodemb3b}, with $Q$ suitably chosen depending on $R$ and $q$, and \eqref{goodemb2}, on the linear operator
$$
f \mapsto F_\phi\left(f \cic{1}_{(\bigcup_{J \in \cic {J}} KJ)^c}\right) \cic{1}_{E_{\cic{J}}}.
$$
This completes the proof of Proposition \ref{goodemb}. 
 
\subsection{Proof of Theorem \ref{bademb}: preliminaries} \label{ss72} First of all, we record two lemmata which are obtained respectively from Lemma \ref{loc1} and \ref{loc3} via  an argument analogous to the one used for Proposition \ref{goodemb} and involving \eqref{sizeaver}. The first is the $q=\infty$ easy endpoint of \eqref{cet} from Theorem \ref{bademb}. The second is  key to the estimation of the \emph{bad part} leading to  the embedding \ref{cet}.
\begin{lemma} \label{lemmabademb1} Let $f\in \mathcal S(\R)$, $\lambda>0 $ and $1\leq p<  2$, and refer to the notation of \eqref{maxfctbd} for $E_{f,\lambda,p}$. Then \begin{align}  & \label{bademb1}
\left\|F_\phi (f) \cic{1}_{\mathcal Z\setminus E_{f,\lambda, p}} \right\|_{L^\infty(\mathcal Z, \sigma_{\cic{\alpha}},\mathsf{s})} \leq C_{\alpha,\phi}{\lambda\|f\|_p} 
\end{align}
 with $C_{\alpha,\phi}=C\alpha^{-C} \mathcal{S}_C(\phi).$ 
\end{lemma}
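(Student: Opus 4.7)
The plan is to mimic the reduction used for Proposition \ref{goodemb}: average out the frequency variable by \eqref{sizeaver}, which transfers the generalized-tent $L^\infty$ norm on $\mathcal Z$ into classical-tent $L^\infty$ norms on $Z$ for the slices $(F_\phi(f))_{\xi,\theta} = \Phi^{\xi;\theta}(f)$, and then invoke Lemma \ref{loc1} on each slice. Since $E_{f,\lambda,p}=G\times\R$ with
\[
G=\bigcup_{I\in\cic{I}_{f,\lambda,p}}\T(3I)\subset Z,
\]
formula \eqref{sizeaver} and the identification \eqref{dilmodgen1} yield
\[
\begin{split}
&\|F_\phi(f)\cic{1}_{\mathcal Z\setminus E_{f,\lambda,p}}\|_{L^\infty(\mathcal Z,\sigma_{\cic\alpha},\mathsf s)}\\
&\quad \leq\sup_{\xi\in\R}\sup_{|\theta|<1}\left\|\Phi^{\xi;\theta}(f)\cic{1}_{Z\setminus G}\right\|_{L^\infty(Z,\sigma,\mathsf s_\infty)}+\alpha^{-\frac12}\sup_{\xi\in\R}\sup_{\theta\in\Theta_{\cic\alpha}}\left\|\Phi^{\xi;\theta}(f)\cic{1}_{Z\setminus G}\right\|_{L^\infty(Z,\sigma,\mathsf s_2)},
\end{split}
\]
and, as discussed above \eqref{dilmodgen1}, the systems $\Phi^{\xi;\theta}$ are $\xi$-adapted with constants controlled by $C_N\alpha^{-CN}\mathcal S_N(\phi)$, uniformly in $\theta\in(-1,1)$, and possess mean zero whenever $\theta\in\Theta_{\cic\alpha}$.

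The key set-theoretic observation is that, writing $\mu=\lambda\|f\|_p$, the inclusion $\mathrm{M}f(x)\leq\mathrm M_p(\mathrm M f)(x)$ (a consequence of Jensen's inequality) gives the level-set containment $\{\mathrm{M}f>\mu\}\subset\{\mathrm M_p(\mathrm M f)>\mu\}$. Hence every maximal dyadic interval $J$ with $J\subset\{\mathrm{M}f>\mu\}$ is contained in some $I\in\cic{I}_{f,\lambda,p}$, from which
\[
E_{f,\mu,1}=\bigcup_{J\in\cic{I}_{f,\mu,1}}\T(3J)\subset G,
\]
so $\cic 1_{Z\setminus G}\leq\cic 1_{Z\setminus E_{f,\mu,1}}$. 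Applying now Lemma \ref{loc1}, in its $\mathsf s_\infty$ form to the $\xi$-adapted system $\Phi^{\xi;\theta}$ for $|\theta|<1$ and in its $\mathsf s_2$ form when additionally $\theta\in\Theta_{\cic\alpha}$ (so that $\Phi^{\xi;\theta}$ has mean zero), we bound each slice norm on the right-hand side above by $A_C(\Phi^{\xi;\theta})\mu\lesssim\alpha^{-C}\mathcal S_C(\phi)\lambda\|f\|_p$, which chained together yields the desired inequality with $C_{\alpha,\phi}=C\alpha^{-C}\mathcal S_C(\phi)$.

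I do not anticipate any real obstacle: the argument is a routine corollary of Lemma \ref{loc1} together with the slicing identity \eqref{sizeaver}, so the only point needing care is the comparison $E_{f,\mu,1}\subset G$ which lets us import the classical-tent bound at threshold $\mu=\lambda\|f\|_p$ to the coarser exceptional set defining $E_{f,\lambda,p}$.
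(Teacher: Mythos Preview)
Your proposal is correct and follows essentially the same approach as the paper, which states that Lemma \ref{lemmabademb1} is obtained from Lemma \ref{loc1} via the slicing argument based on \eqref{sizeaver}, exactly as in the proof of Proposition \ref{goodemb}. The only detail the paper leaves implicit and you spell out is the containment $E_{f,\mu,1}\subset G$ (with $\mu=\lambda\|f\|_p$), which follows from $\mathrm{M}f\leq \mathrm{M}_p(\mathrm{M}f)$ and the nesting of dyadic intervals; this is indeed the point needed to transfer Lemma \ref{loc1} at threshold $\mu$ to the coarser exceptional set $E_{f,\lambda,p}$.
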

\begin{lemma} \label{gloc3}  Let $\xi \in \R$,  and
$b$, $\cic{L}$, and 
 $\lambda>0$ be as in Lemma \ref{loc3}. 
 Let $K>1$ be a given constant and denote
\begin{equation}
\label{gconstraint}
E_{K\cic{L}} := \bigcup_{L \in \cic{L}} \mathsf{T} (3KL) \times \R \subset \mathcal Z.
\end{equation}
Then 
\begin{equation}
\label{gestsize}
\sup_{J\subset \R}  \mathsf{s} \left(F_\phi( b) \cic{1}_{\mathcal Z\setminus E} \right) (\mathsf{T}(J,\xi)) \leq C_{\alpha,\phi,Q} K^{-Q} \lambda,
\end{equation}
with $C_{\alpha,\phi,Q}=C_Q\alpha^{-CQ}   \mathcal{S}_{CQ}(\phi)$.
 \end{lemma}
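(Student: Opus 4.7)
The plan is to reduce the bound for $F_\phi(b)$ on generalized tents $\mathsf{T}(J,\xi)$ to the classical-tent estimates of Lemma \ref{loc3} via the averaging-in-$\theta$ identities recorded at the beginning of Section \ref{secgentents}. Applying the formulas preceding \eqref{sizeaver} pointwise at fixed $\xi$ and using the identification \eqref{dilmodgen1}, one obtains
\[
\begin{split}
\sup_{J}\mathsf{s}_\infty\big(F_\phi(b)\cic{1}_{\mathcal Z\setminus E_{K\cic L}}\big)(\mathsf{T}(J,\xi))&\leq \sup_{|\theta|<1}\big\|{\Phi}^{\xi;\theta}(b)\cic{1}_{Z\setminus E_{K\cic L}}\big\|_{L^\infty(Z,\sigma,\mathsf{s}_\infty)},\\
\sup_{J}\mathsf{s}_2\big(F_\phi(b)\cic{1}_{\mathcal Z\setminus E_{K\cic L}}\big)(\mathsf{T}(J,\xi))&\leq C\alpha^{-\frac12}\sup_{\theta\in\Theta_{\cic\alpha}}\big\|{\Phi}^{\xi;\theta}(b)\cic{1}_{Z\setminus E_{K\cic L}}\big\|_{L^\infty(Z,\sigma,\mathsf{s}_2)},
\end{split}
\]
where on the right $E_{K\cic L}=\bigcup_L\mathsf{T}(3KL)\subset Z$ is the classical-tent set from \eqref{excset0} (which is precisely the $\eta$-slice of the generalized set in \eqref{gconstraint}). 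The second line uses that the $\theta$-integral arising from the $\mathsf{s}_2$-formula is absorbed using $|\Theta_{\cic\alpha}|\leq 2$ and the exchange of $\sup_J$ with the integral in $\theta$.

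With this reduction in hand, the conclusion is immediate from Lemma \ref{loc3}. For each $|\theta|<1$, the system ${\Phi}^{\xi;\theta}$ is $\xi$-adapted with $A_N({\Phi}^{\xi;\theta})\leq C_N\alpha^{-CN}\mathcal S_N(\phi)$, so the $p=\infty$ case of \eqref{estsize} bounds the first display by $C_Q\alpha^{-CQ}\mathcal S_{CQ}(\phi)K^{-Q}\lambda$. For $\theta\in\Theta_{\cic\alpha}$, the parameter constraints \eqref{paramt} on $\delta$ together with the hypothesis on $\supp\widehat\phi$ force $\xi\notin\supp\widehat{\phi_{t,\eta(\theta)}}$; that is, ${\Phi}^{\xi;\theta}$ additionally has mean zero, and the $p=2$ case of \eqref{estsize} bounds the second display by the same quantity. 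Summing the two and absorbing $\alpha^{-1/2}$ into $\alpha^{-CQ}$ yields \eqref{gestsize} with the asserted dependence on $\phi$ and $\alpha$.

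I expect the only substantive point to be the verification that ${\Phi}^{\xi;\theta}$ is mean-zero on the range $\theta\in\Theta_{\cic\alpha}$, which is precisely the reason the lacunary cut $t|\xi-\eta|>\delta$ was built into the definition of the size $\mathsf{s}_2$ on generalized tents in Section \ref{ssgt}. Once this identification is in place, Lemma \ref{loc3} plugs in directly and the proof mirrors the one of Proposition \ref{goodemb} given above, with the multi-frequency decay gain in $K$ replacing the Calder\'on--Zygmund cancellation exploited there.
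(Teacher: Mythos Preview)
Your proof is correct and follows essentially the same approach the paper indicates: Lemma \ref{gloc3} is obtained from Lemma \ref{loc3} via the averaging identity \eqref{sizeaver} and the identification \eqref{dilmodgen1}, exactly as in the proof of Proposition \ref{goodemb}. Your observation that the set $E_{K\cic L}$ in \eqref{gconstraint} is of product form $G\times\R$, so that the indicator passes cleanly through the $\theta$-slicing, and your verification of the mean-zero property of ${\Phi}^{\xi;\theta}$ for $\theta\in\Theta_{\cic\alpha}$, are precisely the points needed to make the reduction work.
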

We will  also need     several  components of the  proof of \eqref{goodemb2}, which in fact can be reconstructed by combining the definitions and lemmata that follow. Since $\cic \alpha$ will be thought of as fixed throughout this paragraph, as well as for  the next section, we  omit it from the notation and write $\T$ in place of $\mathsf{T}_{\cic \alpha}$, $\sigma $ in place of $\sigma_{\cic \alpha}$, $\nu$ in place of $\nu_{\cic \alpha}$.
In the upcoming definitions, the   constant $C_{{ \alpha},\phi}$ is allowed to depend on the parameter $ {\alpha}$, more precisely, polynomially in $|\alpha|^{-1}$,  and on $\mathcal S_C(\phi)$ for large enough $C$.

\begin{remark}[Reduction to tents with discrete parameters]\label{remcet1} Let $\mathcal D$ be a finite union of dyadic grids on $\R$. We denote by $\cic{E}_{\mathcal D}$ the collection of generalized tents $\mathsf{T}(I,\xi)$ such that $I \in \mathcal D$ and $\xi \in \delta|I|^{-1} \mathbb Z$ for some small, fixed  dyadic parameter $\delta$. Momentarily, denote by $\mu_{\mathcal D}$ the outer measure generated by the collection $\cic{E}_{\mathcal D}$ via the premeasure $\sigma_{\mathcal D}(\mathsf{T}(I,\xi))=|I|$.  Following the reduction presented in \cite[Lemma 5.2]{DoThiele15}, there is a suitable choice of $\mathcal D$  yielding   equivalence of outer $L^p$ spaces
$$
L^p(\mathcal Z, \sigma_\mathcal{D}, \mathsf{s}) \sim L^p(\mathcal Z, \sigma, \mathsf{s}), \qquad L^{p,\infty}(\mathcal Z, \sigma_\mathcal{D}, \mathsf{s}) \sim L^{p,\infty}(\mathcal Z, \sigma, \mathsf{s})
$$
with absolute constant. 
Relying upon  the above equivalence, we   switch to working with the  spaces $L^p(\mathcal Z, \sigma_\mathcal{D}, \mathsf{s})$ and their weak counterparts in place of $L^p(\mathcal Z, \sigma, \mathsf{s})$. However, we drop the $\mathcal D$, thus reverting to the original notation. There is no loss in generality in actually assuming that $\mathcal D$ is the standard dyadic grid.
\end{remark}

\begin{definition} A countable collection  $\Tt=\{\T_n\}$ of generalized tents is said to be $\infty$-\emph{strongly disjoint} if the following holds: there exist points $(u_n,t_n,\eta_n)\in \T_n$ with $t_n\geq 2^{-3}|I_{\T_n}|$, such that for all $g\in L^2(\R)$, if
$$
\rho:=\inf_{n} F_\phi(g)(u_n,t_n,\eta_n) >0
$$ 
there holds
\begin{equation}
\label{infsd1}
\rho^2\sum_{\T_n \in \Tt} |I_{\T_n}|  \leq C_{\alpha,\phi} \|g\|_2^2.
\end{equation}
\end{definition}
\begin{definition} A countable collection  $\Tt$ of generalized tents is said to be $2$-\emph{strongly disjoint} if the following holds. For each $\T \in \Tt$ there exists a distinguished subset $\T^* \subset \T^\ell$ such that 
whenever $g\in L^2(\R)$ satisfies  
\begin{equation}\begin{split}
&
\sup_{\T \in \Tt } \sup_{(u,t,\eta) \in \T^*}   \left|F_\phi(g)(u,t,\eta)\right| \leq C \rho,
\\ & C^{-1} \rho   \leq\left( \frac{1}{|I_\T|}\int_{\T^*} |F_\phi(g)|^2 \, \d u \d t \d \eta\right)^{\frac12} \leq C \rho    \ \qquad \forall \T \in \Tt
\end{split}
\end{equation}
for some $\rho>0$,
there holds
\begin{equation}
\label{2sd1}
\rho^{2} \sum_{\T \in \Tt} |I_\T| \leq C
\sum_{\T \in \Tt} \int_{\T^*} |F_\phi(g)|^2 \, \d u \d t \d \eta \leq C_{\alpha,\phi}  \|g\|_2^2.
\end{equation}
\end{definition}
\begin{lemma}[Selection algorithm] \label{lemmasele} Let $f\in \mathcal S(\R)$ with compact frequency  support and $E\subset \mathcal Z$ be given.
Denote by $F:=F_\phi(f)\cic{1}_{E^c}$ and assume that for some $\lambda>0$
$$
 \sup_{\T} \mathsf s(F_\phi(f)\cic{1}_{E^c})(\T) >\lambda.
$$
 Then, there exist an $\infty$-strongly disjoint collection   $\Tt_0=\{\T_n\}$ with distinguished points
 $(u_n,t_n,\eta_n)\in \T_n,$  and a $2$-strongly disjoint collection  $\Tt_1=\{\T\}$ with distinguished subsets $\T^*\subset \T^\ell$ such that
\begin{align}
& \nu\left( \mathsf{s}(F )>\lambda\right)\leq \sum_{\T \in \Tt_0 \cup \Tt_1} |I_\T|,\label{lemmasele1}\\
& \inf_{\T_n \in \Tt_0} |F (u_n,t_n,\eta_n)| > \lambda ,\label{lemmasele2}
\\ & \sup_{\T \in \Tt_1}\sup_{(u,t,\eta) \in \T^*}   \left|F (u,t,\eta)\right| \leq \lambda \label{lemmasele3} ,
\\ &     2^{-4} \lambda \leq \left(\frac{1}{|I_\T|}  \int_{\T^*} |F  |^2 \, \d u \d t \d \eta \right)^{\frac12}\leq \lambda \qquad \forall \T \in \Tt_1.\label{lemmasele4}
\end{align}
\end{lemma}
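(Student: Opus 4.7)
The proof proceeds by a two-stage greedy selection, adapted from the standard scheme used to prove \eqref{goodemb2} in \cite{DoThiele15} but now separating the $\mathsf{s}_\infty$- and $\mathsf{s}_2$-contributions of $F$. By Remark \ref{remcet1} we may work with the discretized lattice of tents $\T(I,\xi)$ with $I\in\mathcal{D}$ and $\xi\in\delta|I|^{-1}\mathbb{Z}$, so the relevant suprema are attained and the algorithms are well-defined.

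\emph{Stage 1} ($\Tt_0$ via $\mathsf{s}_\infty$-stopping). Set $F_0:=F$. At step $n$, if there exists $(u,t,\eta)\in\mathcal{Z}$ with $|F_{n-1}(u,t,\eta)|>\lambda$, select one such triple with $t$ within a factor of $2$ of the supremum (ensuring scales are selected in non-increasing order) and call it $(u_n,t_n,\eta_n)$. Let $\T_n=\T(I_n,\xi_n)$ be the tent from the discrete lattice with $u_n=c(I_n)$, $|I_n|=8t_n$, and $\xi_n$ chosen so that $|\alpha(\eta_n-\xi_n)+\beta t_n^{-1}|\leq t_n^{-1}$; then $(u_n,t_n,\eta_n)\in\T_n$ automatically with $t_n\geq 2^{-3}|I_{\T_n}|$. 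Update $F_n:=F_{n-1}\cic{1}_{\mathcal{Z}\setminus\T_n}$ and iterate. When the procedure halts, $|F|\leq\lambda$ on $\mathcal{Z}\setminus\Omega_0$, where $\Omega_0:=\bigcup_n\T_n$. This produces $\Tt_0=\{\T_n\}$ with the distinguished near-top points satisfying \eqref{lemmasele2} and extinguishes the $\mathsf{s}_\infty$-contribution outside $\Omega_0$.

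\emph{Stage 2} ($\Tt_1$ via $\mathsf{s}_2$-stopping). Let $G_0:=F\cic{1}_{\mathcal{Z}\setminus\Omega_0}$. At step $m$, if $\sup_\T \mathsf{s}_2(G_{m-1})(\T)>\lambda$, select a tent $\T$ of maximal scale attaining (up to a factor of $2$) this supremum. Construct $\T^*\subset\T^\ell$ by removing sublevel cells of $\T^\ell$ one at a time until the normalized $L^2$-average of $G_{m-1}$ over $\T^*$ first falls into $[2^{-4}\lambda,\lambda]$; this is possible because the full $\T^\ell$ exceeds $\lambda$, each removed cell changes the normalized $L^2$-mass by a bounded discrete increment, and shrinking to a single cell eventually produces an average $<2^{-4}\lambda$. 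Since $\T^*\subset\mathcal{Z}\setminus\Omega_0$, condition \eqref{lemmasele3} holds automatically, and \eqref{lemmasele4} is forced by the construction. Update $G_m:=G_{m-1}\cic{1}_{\mathcal{Z}\setminus\T^*}$. When the procedure halts, the cover $\Omega_0\cup\bigcup_{\T\in\Tt_1}\T$ ensures $\sup_{\T}\mathsf{s}(F\cic{1}_{\mathcal{Z}\setminus\Omega_0\setminus\bigcup\T})(\T)\leq\lambda$, hence \eqref{lemmasele1}.

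\emph{Main obstacle.} The crux is verifying the strong-disjointness bounds \eqref{infsd1} and \eqref{2sd1}. For $\Tt_0$, the decreasing-scale order guarantees that each distinguished point $(u_n,t_n,\eta_n)$ lies outside all previously selected tents $\T_m$, $m<n$. Writing $F_\phi(g)(u_n,t_n,\eta_n)=\langle g,\phi_{t_n,\eta_n}(\cdot-u_n)\rangle$, this near-top placement makes the dual wave packets essentially supported on disjoint Heisenberg boxes, so a $TT^*$/Bessel computation yields $\sum_n|I_{\T_n}||F_\phi(g)(u_n,t_n,\eta_n)|^2\lesssim A_C(\Phi)^2\|g\|_2^2$ after rescaling each term by $|I_{\T_n}|$ to account for the normalization; this forces \eqref{infsd1}. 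For $\Tt_1$, the subsets $\T^*$ are pairwise disjoint by construction, and within each $\T^*\subset\T^\ell$ the extra constraint $t|\xi-\eta|>\delta$ means that the wave packets arising in $F_\phi(g)(u,t,\eta)$ behave like a mean-zero adapted system. Therefore Lemma \ref{areaintegral}, applied after averaging the parameter $\theta$ in the spirit of \eqref{sizeaver}, yields $\sum_{\T\in\Tt_1}\int_{\T^*}|F_\phi(g)|^2\,\d u\,\d t\,\d\eta\lesssim A_C(\Phi)^2\|g\|_2^2$, which together with the lower bound in \eqref{lemmasele4} delivers \eqref{2sd1}.
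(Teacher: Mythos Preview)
Your two–stage skeleton matches the paper: first an $\mathsf s_\infty$-greedy selection producing $\Tt_0$ with near-top distinguished points, then an $\mathsf s_2$-selection on the remainder producing $\Tt_1$ with distinguished subsets $\T^*$, both following \cite[\S5.2]{DoThiele15}. For $\Tt_0$ your $TT^*$/Bessel heuristic is in the right spirit; the paper makes this precise by quoting the Schur-type bound \cite[(5.11)]{DoThiele15},
\[
\sup_n\sum_{k:t_k\leq t_n}\Big(\frac{t_k}{t_n}\Big)^{1/2}|\langle\phi_k,\phi_n\rangle|\leq C_{\alpha,\phi},
\]
and then passing through a level-set decomposition in $|F_\phi(g)|$ to obtain \eqref{infsd1}.

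There is, however, a genuine gap in your verification that $\Tt_1$ is $2$-strongly disjoint. You argue that the $\T^*$ are pairwise disjoint in $\mathcal Z$ and that on each $\T^*\subset\T^\ell$ the wave packets form a mean-zero $\xi_\T$-adapted system, and then invoke Lemma~\ref{areaintegral} ``after averaging in $\theta$'' to conclude $\sum_{\T\in\Tt_1}\int_{\T^*}|F_\phi(g)|^2\lesssim\|g\|_2^2$. This does not work: Lemma~\ref{areaintegral} controls a \emph{single} $\xi$-adapted system, whereas the tents in $\Tt_1$ have different top frequencies $\xi_\T$, so no single application (or $\theta$-average) covers the sum. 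And pairwise disjointness of the $\T^*$ as subsets of $\mathcal Z$ is by itself insufficient: $\int_{A}|F_\phi(g)|^2\,\d u\,\d t\,\d\eta$ is not bounded by $\|g\|_2^2$ for general $A\subset\mathcal Z$ (integrating out $\eta$ at fixed $(u,t)$ gives $\sim t^{-1}\mathrm M(|g|^2)(u)$, and the $(u,t)$-projections of disjoint $\T^*$ can overlap arbitrarily). What is actually needed, and what the paper invokes, is the almost-orthogonality between wave packets coming from \emph{different} selected tents --- the same Schur/Bessel mechanism used for $\Tt_0$, fed by the geometric separation that the maximal-scale selection in \cite[\S5.2]{DoThiele15} enforces. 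The paper says exactly this: ``the same ideas used for $\Tt_0$ will yield that $\Tt_1$ is a $2$-strongly disjoint collection.''

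A secondary issue: your construction of $\T^*$ by ``removing sublevel cells one at a time'' does not obviously produce the two-sided bound \eqref{lemmasele4}. The quantity $|I_\T|^{-1}\int_{\T^*}|G_{m-1}|^2$ decreases monotonically as cells are removed, but the decrement from a single cell is not \emph{a priori} small, so you may jump past the window $[2^{-4}\lambda,\lambda]$. In \cite{DoThiele15} the upper bound comes from maximality of the previously selected tents (so the current $\T^\ell$, after excising earlier $\T^*$'s, already has $\mathsf s_2\leq\lambda$), and the lower bound from the selection threshold; you should follow that route rather than an ad hoc cell-removal.
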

\begin{proof} We will carry on explicitly the construction of $\Tt_0$, and prove that it is an $\infty$-strongly disjoint collection, by perusing the first part of  the argument of \cite[Subsection 5.2]{DoThiele15}, concerning the $\mathsf s_\infty$ part of $\mathsf s$. Proceeding as in \cite{DoThiele15}, we may select a  (finite or countably infinite) collection of tents $\Tt_0:=\{\T_n\}$, whose union we denote by $H$, and   $(u_n,t_n,\eta_n)\in\T_n\cap E^c$  such that
$$
\inf_{n}F_\phi(f)(u_n,t_n,\eta_n) > \lambda, \qquad  \sup_{\T} \mathsf s_\infty(F\cic{1}_{H^c})(\T) \leq \lambda.
$$
The first property of the last display is  \eqref{lemmasele2}. The second property ensures  that 
\begin{equation} \label{lemmasele11}
\nu\left( \mathsf{s}_\infty(F )>\lambda\right)\leq \sum_{\T \in \Tt_0 } |I_\T|.
\end{equation}
The construction of the sequence $\{(u_n,t_n,\eta_n)\}$  ensures that, setting $
\phi_n(\cdot)= \sqrt{t_n} \phi_{t_n,\eta_n}(u_n -\cdot)
$, there holds $$
\sup_{n}\sum_{k: t_k \leq t_n}\left(\frac{t_k}{t_n}\right)^{\frac12}|\l \phi_k, \phi_n\r| \leq C_{\alpha,\phi};
$$
this is \cite[eq.\ (5.11)]{DoThiele15}.
Now, recalling that for any $g \in L^2(\R)$, one has $ \sqrt{t_n} F_\phi(g)(u_n,t_n,\eta_n) = \l g, \overline \phi_k \r $ the exact same argument of  \cite{DoThiele15} yields, for all $m\geq 0$,
$$
\sum_{n :\rho2^{m}\leq F_\phi(g)(u_n,t_n,\eta_n) <\rho2^{m+1}} \rho^2 |I_{\T_n}|\leq C     2^{-2m}\sum_{n :\rho2^{m}\leq F_\phi(g)(u_n,t_n,\eta_n) <\rho2^{m+1}}   |\l g, \overline \phi_k \r|^2 \leq C_{\alpha,\phi}2^{-2m}\|g\|_2^2
$$
and the claim \eqref{infsd1} follows by summing over $m$. We have thus  proved that $\Tt_0$ is a $\infty$-strongly disjoint collection.
At this point, a (finite or countable) collection  $\T\in \Tt_1$ of tents with distingushed subsets $\T^*\subset \T^\ell$ satisfying \eqref{lemmasele3}-\eqref{lemmasele4}, and such that
\begin{equation} \label{lemmasele12}
\nu\left( \mathsf{s}_2(F )>\lambda\right)\leq \sum_{\T \in \Tt_1 } |I_\T|,
\end{equation}
 can be 
constructed by using the \emph{second} part of \cite[Subsection 5.2]{DoThiele15}. Notice that we finally obtain the estimate \eqref{lemmasele1} by coupling \eqref{lemmasele11} with \eqref{lemmasele12}. The same ideas used for $\Tt_0$ will yield that $\Tt_1$ is a 2-strongly disjoint collection, and this completes the proof of the lemma.
\end{proof}
\subsection{Proof of  Theorem \ref{bademb}: main argument} 
\label{pfcet} Throughout this argument  $1<p<2$, $q>p'$ are fixed.  We set
\begin{equation}
\label{eps0}  \overline{\eps}= \overline{\eps}_{p,q}:=  \textstyle \frac{1}{p'} -\frac{1}{q}, \qquad {\eps}:=2^{-10}\overline{\eps}. \end{equation}
\subsubsection{Reduction to compact frequency support} We  show that Theorem \eqref{cet} can be reduced to the case of $f$ having compact frequency support, in which case the sets in \eqref{maxfctbd} can be replaced by  \begin{equation}  \label{maxfctbd2} \begin{split}  &
\underline{\cic{I}}_{f,\lambda,p},= \textrm{{max.\ dyad.\ int.\ }} I\textrm{\emph{ s.t. }}
 I\subset \left\{x\in\R: \mathrm{M}_p f(x)>{\lambda\|f\|_p}\right\},\\  & \underline{E}_{f,\lambda,p}=\bigcup_{I\in \underline{\cic{I}}_{f,\lambda,p}}  { \T}(3I) \times \R .\end{split}
 \end{equation}
  Note that, in particular, $\underline{E}_{f,\lambda,p}$ from \eqref{maxfctbd2} is contained in the corresponding version from \eqref{maxfctbd}. 
  
  Let $f \in \mathcal S(\R)$, $\lambda>0$ be fixed. Choose an increasing sequence of integers $\xi_k$ with $\xi_0=0$ and with the property that, setting  $ A_0=(-\xi_2,\xi_2)$, $A_k=\{\xi_{k}<|\xi|<\xi_{k+2}\}$ for $k \geq 1$, and $g_k:=\F^{-1}(\widehat f\cic{1}_{ A_k})$, there holds $$\left\|g_k\right\|_p\leq C_q 2^{-10qk} \|f\|_p, \qquad k\geq 0 $$
  Let $\psi_k$ be a smooth partition of unity subordinated to $\{A_k\}$,  and write 
  $
 f_k:=f * \widehat{\psi_k}. 
  $
  Note that $\|f_k\|_p\leq C\|g_k\|_p\leq C_q2^{-10qk} \|f\|_p$, and furthermore $|f_k|\leq \mathrm Mf$ pointwise. This yields, comparing the definitions \eqref{maxfctbd} and \eqref{maxfctbd2}, that
  $$
 \underline{E}_k:= \underline{E}_{f_k,C_q^{-1} 2^{10qk}\lambda,p} \subset  {E}_{f, \lambda,p}
  $$
This observation, coupled with \eqref{trianin}, and with the use of   \eqref{cet} for each $f_k$ which has compact support in frequency, yields\[
 \begin{split}
 \left\|F_{\phi } (f) \cic{1}_{\mathcal Z\setminus  E_{f,\lambda,p}}\right\|_{L^{q} (\mathcal Z, \sigma_{\cic{\alpha}},\mathsf{s})} &\leq   \sum_{k=0}^\infty 2^{2k}  \left\|F_{\phi } (f_k) \cic{1}_{\mathcal Z\setminus  \underline{E}_{k}}\right\|_{L^{q} (\mathcal Z, \sigma_{\cic{\alpha}},\mathsf{s})} \\ &\leq    C_{p,q,\alpha,\phi} \lambda^{1-\frac{p}{q}} \sum_{k=0}^\infty 2^{2k} 2^{10(q-p)k}\|f_k\|_p \leq C_{p,q,\alpha,\phi} \lambda^{1-\frac{p}{q}} \ \|f\|_p 
 \end{split}
 \]  
 completing the reduction.
 In the argument that follows, we work with the definitions \eqref{maxfctbd2} in place of \eqref{maxfctbd}. For convenience, we drop the underline and return to the notation of \eqref{maxfctbd}.
\subsubsection{Main line of proof} Let now $f\in \mathcal{S}(\R)$ be   fixed and have compact frequency support. By   vertical scaling we can assume $\|f\|_p=1$. To obtain estimate \eqref{cet}, it suffices to prove  the analogue   where   $L^{q } (\mathcal Z, \sigma_{\cic{\alpha}},\mathsf{s})$ is replaced by the corresponding outer $L^{q,\infty}$.
The upgrade to outer $L^{q }$ then follows from log-convexity of the  norms of $ F_{\phi } (f) \cic{1}_{\mathcal Z\setminus  E_{f,\lambda,p}}$, see \eqref{Lplog} from Proposition \ref{propoutermeassumm}. 

  Furthermore, by the dilation invariance \eqref{scalingest} described in Remark \ref{scalingrem}, it suffices to   work with a single value of $\lambda$. 
 We choose the value \begin{equation}
\label{A0}
  \lambda=\lambda_{p,q, {\alpha}, \phi}:= \left(2  C_{ {\alpha},\phi}\right)^{-1} 2^{-\frac{k_0}{q}}  \leq 1,
\end{equation}
where $C_{\alpha,\phi}$ is the constant appearing in \eqref{bademb1} and $2^{k_0}$  is $2^{10}C_0\eps^{-1}$ times  the greater of the two constants $ C_{{\alpha},\phi,Q}$  appearing in \eqref{goodemb3b} and \eqref{gestsize}, for the choice $Q=2^{5} {  \eps}^{-1}$. This way, $k_0=C_{p,q, {\alpha},\phi}$. Here,  $C_0$ is an absolute constant which will be specified later in the proof, see \eqref{techpf}.    
 We can use  the already established \eqref{bademb1} from Lemma \ref{lemmabademb1} and obtain $$
\|F_\phi (f) \|_{L^\infty(\mathcal Z\setminus E_{f,\lambda, p},\sigma_{\cic{\alpha}},\mathsf{s})} \leq  2^{-\frac{k_0}{q}}.
$$ 
The assertion of   \eqref{cet} can then be equivalently reformulated as follows: find $C_{p,q, \alpha,\phi}$ large enough such that
\begin{equation}
\label{ref1}
\sup_{k > k_0}
2^{-k}\nu \left(\mathsf{s} (F ) >    2^{-\frac{k}{q}}\right) \leq C_{p,q,  {\alpha},\phi}, \qquad F:= F_\phi(f) \cic{1}_{\mathcal{Z}\setminus E_{ f,\lambda,p}}.
\end{equation} 
 The collection of dyadic intervals 
\begin{equation}
\label{outdyadint}
\cic{J}= \left\{ J\in \mathcal D:  \inf_{x \in J} \mathrm{M}_p f(x) \leq \lambda
\right\}
\end{equation}
will play a role in our main line of proof, which begins now.

From now on, we   fix $k>k_0$.
 The first step is  the application of the selection Lemma \ref{lemmasele}, with $\lambda$ replaced by $2^{-\frac{k}q}$, yielding an $\infty$-strongly disjoint collection $\Tt_0$ and a $2$-strongly disjoint collection $\Tt_1$, with the properties  
\begin{align}
& \left\{ I_{\T}:\T \in \Tt\right\} \subset    \cic{J},  \label{outexcset}\\
& \label{cetlocpf1} \nu_{\cic{\alpha}}\left( \mathsf{s}(F)>2^{-\frac kq}\right)\leq \sum_{\T \in \Tt} |I_\T|=: 2^{N},\\
&  \inf_{\T \in \Tt_0}\mathsf{s}_\infty(F )(\T)> 2^{-\frac kq}, \label{lemmasele1app}
\\ & \sup_{\T \in \Tt_1}\sup_{(u,t,\eta) \in \T^*}   \left|F(u,t,\eta)\right| \leq 2^{-\frac kq}, \label{lemmasele2app}
\\ &    2^{-\frac kq-4} \leq \left(\frac{1}{|I_\T|}  \int_{\T^*} |F |^2 \, \d u \d t \d \eta \right)^{\frac12}\leq 2^{-\frac kq} \qquad \forall \T \in \Tt_1. \label{lemmasele3app}
\end{align} We wrote for simplicity $\Tt=\Tt_0\cup \Tt_1$. Property \eqref{outexcset} follows by construction of $E_{f,\lambda,p}$, see \eqref{maxfctbd}.  
It is clear that \eqref{ref1} follows immediately if we prove that \begin{equation}
\label{thefinalbd}
2^N\leq C_{  p,q, \alpha, \phi } 2^{k}.
\end{equation} for a suitable constant.  Of course, we can assume  $N \geq k$ from now on, otherwise there is nothing to prove. By first passing to an arbitrary finite subcollection, and then by proving bounds independent on the cardinality of such subcollection, we can reduce to the case of  $\Tt$ being finite.   We will need to define the further (finite) parameters
\begin{equation}
\label{ref3}
 2^M:=\max\left\{ 2^k , \sup_{J \in \cic{J}} \frac{1}{|J|}
\sum_{\mathsf{T}\in \Tt: I_\T\subset J}  |I_\mathsf{T}| \right\}, \qquad K=2^{\eps M}.
\end{equation}
  In particular, $2^M$ plays the role of BMO norm of the counting function in the next lemma. The proof (and in fact, the statement) is identical to that of \cite[eq.\ (6.19)]{DPTh2014} and is thus omitted.
\begin{lemma} There exists a decomposition $\Tt=\Tt' \cup \Tt''$  \begin{align}
\label{theLinftynorm}
&\Bigg\| \sum_{\mathsf{T}\in \Tt'} \cic{1}_{9K I_{\mathsf T}} \Bigg\|_\infty \leq 2^{9M},  \\ 
&  \sum_{\T \in \Tt''} |I_\T| \leq 2^{-k}. \label{thesmallpart}
\end{align}
\end{lemma}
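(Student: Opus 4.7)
The plan is to establish the decomposition by a Calder\'on--Zygmund-type stopping-time argument on the counting function
\[
N(x) := \sum_{\T \in \Tt} \cic{1}_{I_\T}(x), \qquad x \in \R.
\]
By the very definition of $M$ in \eqref{ref3}, $N$ satisfies the dyadic BMO-type estimate
\[
\frac{1}{|J|}\sum_{\substack{\T \in \Tt \\ I_\T \subseteq J}} |I_\T| \leq 2^M, \qquad J \in \cic{J}.
\]
The strategy is to isolate a subcollection $\Tt''$ whose bases live in a small exceptional set produced by a high-threshold stopping on $N$, and to show that the remainder $\Tt'$ has bounded overlap after dilation by $9K$.

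Concretely, I would let $\mathcal{B}$ be the collection of maximal dyadic intervals $B$ on which a local average of $N$ exceeds a threshold of the form $2^{cM}$ with $c$ close to $9$, and set $\Omega := \bigcup_{B \in \mathcal{B}} B$. Declaring
\[
\Tt'' := \{\T \in \Tt : I_\T \subset \Omega\}, \qquad \Tt' := \Tt \setminus \Tt'',
\]
the John--Nirenberg-type decay of $|\Omega|$ obtained by iterating the BMO bound, combined with the inequality $\sum_{\T \in \Tt''} |I_\T| \leq 2^M |\Omega|$ (which itself uses the BMO estimate inside each $B$), would yield \eqref{thesmallpart} provided $c$ is chosen large enough in light of $M \geq k$ from \eqref{ref3}, which converts the exponential decay in $c$ into the required numerical bound $2^{-k}$.

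For the pointwise overlap estimate \eqref{theLinftynorm}, the key observation is that no $\T \in \Tt'$ has $I_\T$ inside $\Omega$, so the stopping criterion fails on every dyadic ancestor of each such $I_\T$; hence the counting function associated with $\Tt'$ is controlled scale by scale. A dyadic decomposition of $\sum_{\T \in \Tt'}\cic{1}_{9K I_\T}(x)$ according to the length $|I_\T|$, combined with the fact that at each scale only $O(K \cdot 2^M)$ intervals $I_\T$ can satisfy $x \in 9K I_\T$ (by applying the BMO bound to the $O(K)$-many dyadic intervals of that length within distance $O(K|I_\T|)$ of $x$), then yields the total bound $2^{9M}$. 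The main obstacle is the interaction between the $9K$-dilation with $K = 2^{\eps M}$ and the dyadic stopping on $N$: the dilation smears each $I_\T$ across up to $\log_2 K = \eps M$ additional scales, so the scale summation must remain inside the $2^{9M}$ cap. The smallness of $\eps$, as fixed in \eqref{eps0}, is precisely what guarantees this absorption and justifies the exponent $9M$ in \eqref{theLinftynorm}.
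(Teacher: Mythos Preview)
The paper gives no self-contained proof here; it simply cites \cite{DPTh2014}, eq.~(6.19), so there is nothing to compare your argument against line by line. Your John--Nirenberg stopping strategy is the natural approach and is almost certainly the skeleton of what is done in that reference. That said, your outline has two genuine gaps.

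For \eqref{thesmallpart}: the JN iteration at threshold $2^{cM}$ gives $|\Omega|\lesssim \exp\big(-c'\,2^{(c-1)M}\big)\,|\Omega_0|$, where $|\Omega_0|$ is the measure of the union of the maximal $I_\T$'s and is only bounded by $2^N$---the very quantity the surrounding argument is trying to control. Your claim that $M\ge k$ alone ``converts the exponential decay into the required numerical bound $2^{-k}$'' ignores this $2^N$ factor; to reach $2^M|\Omega|\le 2^{-k}$ you would need $2^{(c-1)M}\gtrsim N$, which is not available a priori. For \eqref{theLinftynorm}: your per-scale count of $O(K\cdot 2^M)$ contributing tents is correct, but you must then sum over all scales $s$ for which some $I_\T$ of length $2^s$ has $x\in 9KI_\T$, and that number of scales is not controlled by either $\|N'\|_\infty$ or the Carleson bound. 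A clean test case is $I_s=[2^s,2^{s+1}]$, $s=0,\dots,S$, one tent each: the Carleson constant and $\|N\|_\infty$ are both $1$, yet after dilation by any factor $\ge 3$ all $S+1$ scales contribute at $x=0$. Your remark that dilation ``smears each $I_\T$ across $\log_2 K=\eps M$ additional scales'' conflates the number of dyadic generations spanned by a single $9KI_\T$ with the number of distinct scales of $I_\T$ meeting $x$; it is the latter that must be bounded, and your sketch does not do so. (This same example suggests that the precise formulation of \eqref{thesmallpart} in \cite{DPTh2014} may be relative to $2^N$ rather than absolute, or that an additional hypothesis is in play; you should consult the reference directly before attempting to fill in the details.)
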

In view of \eqref{thesmallpart}, $\Tt''$ can be removed from $\Tt$ for all practical purposes related to \eqref{thefinalbd}. By doing this, \eqref{theLinftynorm} now holds for $\Tt$ in place of $\Tt'$.
 At this point, we enucleate the Calder\'on-Zygmund decomposition procedure in the following lemma. 
 It is convenient to adopt the notations, for an interval $J \subset \R$,
 $$ \Tt_0(J):= \{\T \in \Tt_0: I_\T \subset J\}, \qquad  \Tt_1(J):= \{\T \in \Tt_1: I_\T \subset J\}
 \qquad \Tt(J)=\Tt_0(J) \cup \Tt_1(J).
 $$
 We think of the function $g_J$ appearing in the Lemma below as the projection of $f$ onto the time-frequency region corresponding to the generalized tents of $\Tt(J)$.
  \begin{lemma}[CZ decomposition] \label{lemmaCZ} 
Fix $J \in \cic{J}$. We can find   $g^{(J)}\in L^2(\R)$  with the properties 
\begin{equation} \label{lemmaCZL2} \begin{split}&\supp  g^{(J)} \subset 9KJ\\ &     \|g^{(J)}\|^2_2 \lesssim \eps^{-1} \min\left\{  2^{\left(1-\frac2q \right) M}|J|, 2^{3\eps M} 2^{\left(1-\frac2{p'} \right) N} \right\}  \end{split}
\end{equation}
and  
\begin{equation}
\begin{split} & 
\inf_{\T_n \in \Tt_0} |F_\phi(g^{(J)})(u_n,t_n,\eta_n)| > 2^{-\frac kq-1},
\\ &  \sup_{\T \in \Tt_1(J)}\sup_{(u,t,\eta) \in \T^*} \left|F_\phi(g^{(J)})(u,t,\eta)\right|\leq 2^{-\frac kq+1},
\\ &  2^{-\frac kq-5}   \leq \left(\frac{1}{|I_\T|} \int_{\T^*} |F_\phi(g^{(J)})|^2 \, \d u \d t \d \eta \right)^{\frac12} \leq 2^{-\frac kq+1} \qquad \forall \T \in \Tt_1(J).
 \label{lemmaCZsize} 
 \end{split} 
\end{equation}
\end{lemma}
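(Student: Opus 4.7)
The plan is to perform a multi-frequency Calder\'on--Zygmund decomposition of $f$ (localized to a neighborhood of $J$) adapted to the set of selected frequencies $\Xi_J := \{\xi_\T : \T \in \Tt(J)\}$, yielding $f\cic{1}_{9KJ} = g^{(J)} + b$ where the bad part $b = \sum_{L \in \cic{L}} b_L$ is built so that each $b_L$ is supported on a stopping interval $L$ and satisfies the vanishing-moment hypothesis \eqref{vanishing1} for every $\xi \in \Xi_J$ simultaneously, with $Q = 2^{5}\eps^{-1}$ as in the choice of $k_0$ in \eqref{A0}. Concretely, the stopping intervals $\cic{L}$ would be the maximal dyadic subintervals of $9KJ$ on which $\mathrm{M}(|f\cic{1}_{9KJ}|^p)$ exceeds a threshold of the order of $(\lambda 2^{\eps M/p})^p$, and on each such $L$ one subtracts from $f\cic{1}_L$ its orthogonal projection onto the finite-dimensional subspace $V_L \subset L^2(L)$ spanned by $\{x^j \e^{i\xi x} : 0 \le j \le 2(Q+1),\ \xi \in \Xi_J\}$, killing the required moments by construction.

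Having set up the decomposition, I would first discard the contribution of $f\cic{1}_{\R \setminus 9KJ}$ to $F_\phi(f)$ on $\Tt(J)$: since this tail is $K$-separated from $J$, Lemma \ref{loc2} (applied frequency-by-frequency via \eqref{sizeaver}, as in the proof of Proposition \ref{goodemb}) bounds its contribution to $\mathsf{s}(F_\phi(\cdot))$ on each $\T \in \Tt(J)$ by $C_{\alpha,\phi,Q} K^{-Q}\lambda$, which is $\ll 2^{-k/q}$ by the choice of $\lambda = \lambda_{p,q,\alpha,\phi}$ and $Q$. Next, Lemma \ref{gloc3} applied separately for each $\xi \in \Xi_J$ to the bad part $b$ produces the same $K^{-Q}\lambda$-smallness for $F_\phi(b)\cic{1}_{\mathcal Z\setminus E_{K\cic{L}}}$ on $\Tt(J)$. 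The BMO counting-function bound \eqref{theLinftynorm}, available after having absorbed $\Tt''$ via \eqref{thesmallpart}, is what guarantees that the distinguished points $(u_n,t_n,\eta_n)$ of $\Tt_0(J)$ and the distinguished subsets $\T^* \subset \T^\ell$ of $\Tt_1(J)$ remain outside $E_{K\cic{L}}$; this is precisely the purpose of the blow-up factor $K = 2^{\eps M}$ from \eqref{ref3}. Combining these two smallness statements with the quasi-triangle inequality for $\mathsf{s}$ transfers \eqref{lemmasele1app}--\eqref{lemmasele3app} from $F$ to $F_\phi(g^{(J)})$ with only a constant loss, which is absorbed in the powers of $2$ appearing in \eqref{lemmaCZsize}.

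The principal obstacle is the $L^2$ bound \eqref{lemmaCZL2}, which must be proved in two complementary ways. The first, linear-in-$M$ estimate comes from a Bessel/Gram-matrix inequality for the multi-frequency projection $P_L$: the dimension of $V_L$ is $\lesssim (Q+1)\cdot\#\{\xi \in \Xi_J \text{ at the scale of } L\}$, and \eqref{theLinftynorm} limits the latter by $\lesssim 2^{9M}$ at each dyadic scale, while the separation parameter $\delta$ in \eqref{paramt} keeps the Gram matrix well-conditioned. Inserting the localized $L^p$ norm $\|f\cic{1}_{9KJ}\|_p \lesssim K^{1/p}\lambda|J|^{1/p}$ (available since $J \in \cic{J}$, cf.\ \eqref{outdyadint}) and balancing with the stopping threshold produces $\eps^{-1} 2^{(1-\frac{2}{q})M}|J|$. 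The second, $N$-sensitive bound requires inverting the perspective: one controls $\|g^{(J)}\|_2^2$ from its wave-packet transform on the already-selected $\Tt(J)$ via the duals to the strong-disjointness relations \eqref{infsd1}/\eqref{2sd1}, inserting on the one hand the size upper bound $\mathsf{s}(F_\phi(g^{(J)}))(\T) \lesssim 2^{-k/q}$ and on the other the moment cancellation of $b$. The outcome, after routine bookkeeping, is the $2^{3\eps M} 2^{(1-\frac{2}{p'})N}$ bound. Arranging that both $L^2$ estimates hold simultaneously with the correct exponents, so that later interpolation \eqref{Lplog} between them produces exactly \eqref{thefinalbd}, is the genuine technical heart of the argument and the content postponed to Section \ref{sec8}.
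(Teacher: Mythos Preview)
Your outline has the right architecture --- localize, project away moments, show the remainder is negligible on the selected tents --- but two of the load-bearing steps are miscast.

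The decisive issue is the choice of frequency set. You project each $b_L$ against the \emph{global} set $\Xi_J=\{\xi_\T:\T\in\Tt(J)\}$. With that choice, the subspace $V_L$ has dimension $(2Q+3)\#\Xi_J$, and there is no useful bound on $\#\Xi_J$ in terms of $M$ or $N$: it is simply the number of distinct tent frequencies. Your assertion that the effective dimension is only the count of $\xi$ ``at the scale of $L$'' is not correct once you project onto the full span; the full $\#\Xi_J$ enters the $L^2$ bound of Lemma~\ref{borerd}. The paper's fix is to use an $L$-\emph{dependent} set $\Xi_L:=\{\xi_\T:\T\in\Tt(J),\,3L\subset 9KI_\T\}$. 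This costs nothing on the size side (for a fixed $\T=\T(I,\xi)$ one only needs the moments against $\xi$ to vanish for those $b_L$ with $L\subset 3KI$, and for such $L$ indeed $\xi\in\Xi_L$), and it buys the counting estimate
\[
\sum_{L\in\cic{L}(J)}|L|\,\#\Xi_L\le\Big\|\sum_{\T\in\Tt(J)}\cic{1}_{9KI_\T}\Big\|_1\lesssim K\min\{2^M|J|,2^N\}.
\]
\emph{Both} alternatives in \eqref{lemmaCZL2} then drop out of a single H\"older step combining this with $\sum_L|L|\lesssim K\min\{|J|,1\}$ and the bound $\|g_L\|_2^2\lesssim\eps^{-1}K^2(\#\Xi_L)^{1-\frac{2}{p'}}|L|$. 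Your proposed alternative route to the $N$-bound, ``via the duals to the strong-disjointness relations'', runs in the wrong direction: \eqref{infsd1} and \eqref{2sd1} bound $\sum|I_\T|$ from above by $\|g\|_2^2$, not the reverse, so using them to control $\|g^{(J)}\|_2^2$ and then feeding that back into \eqref{thefinalbd} is circular.

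The second gap is your explanation of why the distinguished points $(u_n,t_n,\eta_n)$ and sets $\T^*$ lie outside $E_{K\cic{L}}$. This has nothing to do with the BMO counting bound \eqref{theLinftynorm}. It follows from the stopping threshold: each $L\in\cic{L}$ has $p$-average exceeding $2^{10}K$ while $\lambda\le1$, so $L$ is deep enough inside $\{\mathrm{M}_pf>\lambda\}$ that $9KL\subset 3I$ for some $I\in\cic{I}_{f,\lambda,p}$, giving $E_{3K\cic{L}}\subset E_{f,\lambda,p}$. Since the distinguished points and $\T^*$ are where $F=F_\phi(f)\cic{1}_{\mathcal Z\setminus E_{f,\lambda,p}}$ is nonzero, they automatically avoid $E_{3K\cic{L}}$.
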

With Lemma \ref{lemmaCZ} in hand, we are able to estimate \begin{equation}
\label{thefinalbd2}
2^M\leq C_{{\alpha},\phi,\overline{\eps},q}2^k.
\end{equation} Suppose $M>k$ (otherwise there is nothing to prove) and choose an interval $J$ such that the supremum in \eqref{ref3} is nearly attained. 
We have, using the estimates of \eqref{lemmaCZsize}, property \eqref{2sd1} of  strongly disjoint collections for $\lambda=2^{-\frac kq}$, and  the first bound of \eqref{lemmaCZL2} in the final estimate, that
\begin{align*} & \quad 2^{M-1} |J| \leq \sum_{ \mathsf{T}\in \Tt_0(J) } |I_{\mathsf T}|  +\sum_{ \mathsf{T}\in \Tt_1(J) } |I_{\mathsf T}|  \leq C_{{\alpha},\phi} 2^{\frac{2k}{q}}\|g^{(J)}\|_2^2 \leq C_{{\alpha},\phi} \eps^{-1} 2^{\frac{2k}{q}} 2^{\left(1-\frac2q\right) M} |J|
\end{align*}
and the bound  \eqref{thefinalbd2} follows by rearranging. Now, take $J$ large enough such that $\Tt(J)=\Tt$. Such a $J$ exists since $\Tt$ is finite.  By the same token, but this time using the estimate in \eqref{lemmaCZL2} which does not depend on $|J|$, and \eqref{thefinalbd2},  we obtain 
\begin{align*} & \quad 2^N= \sum_{ \mathsf{T}\in \Tt_0} |I_{\mathsf T}|  +\sum_{ \mathsf{T}\in \Tt_1 } |I_{\mathsf T}|  \leq C_{{\alpha},\phi}2^{\frac{2k}{q}} \|g^{(J)} \|_2^2 \leq C_{ {\alpha},\phi,\overline{\eps},q}2^{(\frac{2}{q}+3\eps) k}     2^{\left(1-\frac2{p'}\right) N} \leq C_{ {\alpha},\phi,\overline{\eps},q}2^{\frac{2}{p'}  k}     2^{\left(1-\frac2{p'}\right) N} ,
\end{align*}
and the estimate \eqref{thefinalbd} follows by rearranging. The proof of \eqref{cet} is complete up to the CZ Lemma \ref{lemmaCZ}.
 \section{Proof of the Calder\'on-Zygmund Lemma \ref{lemmaCZ}}  \label{sec8}
  Before we enter the argument for Lemma \ref{lemmaCZ}, we state the following projection Lemma, which will be used in the construction of the functions $g^{(J)}$. 
 \begin{lemma} \label{borerd}
Let $f \in L^p(\R),$ $1\leq p<2$. Let  $L\subset \R$ be an interval and  $$\lambda= 
|L|^{-\frac1p}\| f\cic{1}_L\|_p . $$
Let $\Xi\subset \R$ be a finite set and $Q\geq 0$ an integer. Then
$
f\cic{1}_L= g+ b
$
with $\supp g, \, \supp b \subset 3L$, and
\[
\begin{split}
&
\|g\|_2^2 \lesssim \lambda^2  (Q \#\Xi)^{1-\frac{2}{p'}}|L| ,  \\ &\int  b(x)  x^j \e^{-i \xi x } \, \d x =0 \qquad \forall j=0,\ldots, Q, \; \xi \in \Xi.
\end{split}
\]
 \end{lemma}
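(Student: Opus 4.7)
The plan is to take $V$ as the (at most) $d$-dimensional subspace of $L^2(\R)$, with $d:=(Q+1)\#\Xi$, spanned by the modulated monomials $\{x^j\e^{i\xi x}\cic{1}_{3L}: 0\leq j\leq Q,\ \xi\in\Xi\}$, and to define $g$ as the orthogonal projection of $f\cic{1}_L$ onto $V$ and $b:=f\cic{1}_L-g$. The support conditions will be immediate: $g\in V\subset L^2(3L)$ gives $\supp g\subset 3L$, and $\supp b\subset \supp(f\cic{1}_L)\cup\supp g\subset 3L$. The vanishing moments will follow by testing the orthogonality $b\perp V$ against each generator $x^j\e^{i\xi x}\cic{1}_{3L}$; since $\supp b\subset 3L$, the cutoff $\cic{1}_{3L}$ is harmless and one recovers exactly $\int b(x) x^j\e^{-i\xi x}\,\d x=0$.

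The $L^2$ bound for $g$ is the heart of the argument. I will use the projection identity $\|g\|_2^2=\langle f\cic{1}_L, g\rangle$ together with H\"older's inequality to obtain
\[\|g\|_2^2 \leq \|f\cic{1}_L\|_p \|g\|_{L^{p'}(L)} = \lambda |L|^{1/p}\|g\|_{L^{p'}(L)},\]
reducing the matter to a Bernstein-Nikolskii-type inequality for functions in $V$:
\[\|h\|_{L^{p'}(L)} \lesssim d^{\frac12-\frac{1}{p'}} |L|^{\frac{1}{p'}-\frac12}\|h\|_{L^2(3L)}, \qquad h\in V.\]
Inserting this for $h=g$, rearranging and squaring produces the desired $\|g\|_2^2 \lesssim \lambda^2 d^{1-\frac{2}{p'}}|L|$, which matches the statement since $d \leq (Q+1)\#\Xi$.

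The Nikolskii-type inequality will be proved by log-convex interpolation
\[\|h\|_{L^{p'}(L)} \leq \|h\|_{L^2(L)}^{2/p'}\|h\|_{L^\infty(L)}^{1-2/p'},\]
combined with the trivial bound $\|h\|_{L^2(L)}\leq\|h\|_{L^2(3L)}$ and the endpoint
\[\|h\|_{L^\infty(L)} \lesssim \sqrt{d/|L|}\, \|h\|_{L^2(3L)}, \qquad h\in V,\]
the latter being a consequence of the reproducing-kernel identity $|h(x)|^2 \leq K(x,x) \|h\|_{L^2(3L)}^2$ and the pointwise control $K(x,x)\lesssim d/|L|$ on $L$ for the reproducing kernel $K$ of $V$ inside $L^2(3L)$. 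This last pointwise kernel bound is the main technical obstacle: the averaged identity $\int_{3L} K(x,x)\,\d x = \dim V \leq d$ is automatic, but the pointwise estimate on the inner subinterval $L$ fails for generic $d$-dimensional subspaces and genuinely uses the polynomial-times-exponential structure of $V$. My plan is to derive it via a Cauchy-Schwarz argument applied to the explicit parameterization $h=\sum_{j,\xi} c_{j,\xi}(x/|L|)^j \e^{i\xi x}\cic{1}_{3L}$ with normalized monomials, reducing to a lower bound on the smallest singular value of the associated Gram matrix; the bound on the Gram matrix in turn will follow from the classical Christoffel-function estimates for polynomials of degree $\leq Q$ on a real interval restricted to an interior subinterval, combined with the unitary invariance of $L^2(3L)$ under modulation by $\e^{i\xi x}$ to handle the different frequencies in $\Xi$ uniformly.
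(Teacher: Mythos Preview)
Your overall architecture --- orthogonal projection onto $V$, the duality step $\|g\|_2^2=\langle f\cic{1}_L,g\rangle\leq \lambda|L|^{1/p}\|g\|_{L^{p'}(L)}$, and the Nikolskii inequality via interpolation between $L^2$ and the endpoint $\|h\|_{L^\infty(L)}\lesssim \sqrt{d/|L|}\,\|h\|_{L^2(3L)}$ --- is exactly the paper's route. The gap is in your proposed proof of the $L^\infty$ endpoint.

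The reduction to a \emph{uniform lower bound on the smallest singular value of the Gram matrix} in the basis $\{(x/|L|)^j\e^{i\xi x}\}$ cannot succeed: that singular value is not bounded below independently of $\Xi$. Take $L=[-\tfrac12,\tfrac12]$, $Q=0$, $\Xi=\{0,\eps\}$. The Gram matrix on $3L$ is $\begin{pmatrix}3&c\\\bar c&3\end{pmatrix}$ with $c=\int_{-3/2}^{3/2}\e^{-i\eps x}\,\d x=3-O(\eps^2)$, so $\lambda_{\min}(G)=O(\eps^2)\to 0$. Your Cauchy--Schwarz step $|h(x)|^2\leq d\sum|c_{j,\xi}|^2$ followed by $\sum|c_{j,\xi}|^2\leq \|h\|_2^2/\lambda_{\min}(G)$ then gives a bound that blows up like $\eps^{-2}$, while the correct estimate $K(x,x)\lesssim d/|L|$ holds uniformly (one checks directly that $K(x,x)\to \tfrac13+\tfrac{4x^2}{9}$ in this example). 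The modulation-invariance idea only controls each single-frequency diagonal block of the Gram matrix; it says nothing about the off-diagonal blocks, which are precisely what degenerate when frequencies cluster.

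The paper bypasses the Gram matrix entirely by exploiting a structural feature you have not used: the span of $\{x^j\e^{i\xi x}\}$ is \emph{translation invariant} as a space of functions on $\R$, since $(x+t)^j\e^{i\xi(x+t)}$ expands in the same generators. Given any orthonormal basis $\{w_k\}$ of $V\cic{1}_I$ in $L^2(I)$, the identity $\int_I\sum_k|w_k|^2=\dim V\leq d$ produces a point $x_0\in I$ where $\sum_k|w_k(x_0)|^2\leq d/|I|$. For arbitrary $x\in I$ one translates: $\tilde h(\cdot):=h(\cdot+x-x_0)$ still lies in $V$, so
\[
|h(x)|=|\tilde h(x_0)|\leq\Big(\sum_k|w_k(x_0)|^2\Big)^{1/2}\|\tilde h\|_{L^2(I)}\leq \sqrt{d/|I|}\,\|h\|_{L^2(3I)},
\]
the last inequality because the shift $x-x_0$ has modulus at most $|I|$. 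This gives the endpoint you need with no dependence on the configuration of $\Xi$; plug it back into your interpolation and the rest of your argument goes through verbatim.
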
 
 The proof of the lemma is a projection argument on the  finite dimensional subspace of $L^2(3L)$ given by
$$
\mathcal V_{\Xi, Q}= \mathrm{span} \{x^j \e^{-i \xi x }: j=0, \ldots, Q, \, \xi \in \Xi\}
$$
which is translation invariant when viewed as a subspace of $L^2(L)$. We give a proof of the more general Hilbert space version in the appendix, Lemma \ref{borerdH} therein.
\subsection{Construction of $g^{(J)}$}
  Recalling, from   \eqref{ref3}, the definition of $K$, set
  \[
\cic{L} := \textrm{{max.\ dyad.\ int.\ }} L\quad \textrm{{ s.t. }}\quad  
\|f\cic{1}_L\|_p > 2^{10}    K {|L|^{\frac 1p}}, \qquad \mathbb L = \bigcup_{L \in \cic{L}} L.
\]
 The intervals in $\cic{L}$ are pairwise disjoint.  We have that \begin{align}\nonumber& \|f\cic{1}_L\|_p \lesssim     K {|L|^{\frac 1p}},\\ \label{finov2} & \sup_{I\subset \R }\frac{1}{|I|}
\sum_{L \in \cic{L}: 3L \subset I} |L| \leq  1,\\ &
\label{distL}
E_{3K\cic{L}} = \bigcup_{L \in \cic{L}} \mathsf{T} (9KL) \times \R \subset E_{f,\lambda,p}.\end{align}
Note that \eqref{finov2} means that $\{3L: L\in \cic{L}\}$ have finite overlap  in the sense of \eqref{finov}.
 To obtain the third  property, observe that each $L\in \cic{L}$ is contained in some $I\in \cic{I}_{f,\lambda,p}$, and that $\lambda\leq 1$, see \eqref{A0}. It must actually be that $9KL$ is contained in $3I$, since
$$
\frac{|I|}{|L|} \geq \frac{2^{-3p} \|f\cic{1}_I\|_p^p}{2^{-10p}  K^{-p}\|f\cic{1}_L\|_p^p}\geq 2^{7} K.
$$
By the same token, referring to definition \eqref{outdyadint}, we see that
\begin{equation}
\label{theinters}
J \in \cic{J}, L \in \cic{L},  3L \cap 3KJ\neq \emptyset\implies 3L \subset 9KJ.
\end{equation}
Denote $\cic{L}(J)=\{L\in \cic{L}: 3L \cap  3KJ \neq \emptyset\}$. It will be convenient to observe that, using disjointness
\begin{equation} \label{LJ}
\sum_{L \in \cic{L}(J)} |L| \lesssim \min\left\{K|J|,
\left(2^{10}   K\right)^{-p} \|f\|_p^p\right\} \lesssim K\min\left\{|J|,1\right\} .
\end{equation}

From now on, $J \in \cic{J}$ is fixed.
We come to the definition  of the significant set of frequencies. Using the notation $\T=\T(I_\T, \xi_\T)$ for $\T\in \Tt$, we set
\begin{equation}
\label{freqdef} \Xi_L:= \left\{\xi: \exists \T \in \Tt(J) \textrm{ with } \xi=\xi_\T, 3L \subset 9KI_{\mathsf{T}}\right\}.
\end{equation}
A consequence of the bound \eqref{theLinftynorm} is that
\begin{equation}
\label{Linfcons}
\# \Xi_L \leq \inf_{x\in 3L}   \sum_{\mathsf{T}\in \Tt(J)} \cic{1}_{9K I_{\mathsf T}}(x)  \leq 2^{9M}.
\end{equation}
We can use the first inequality in \eqref{Linfcons} above, together with disjointness of $L \in \cic{L}(J)$ to obtain
\begin{equation}
\label{thecfbd}
\sum_{L\in \cic L(J)} |L|\#\Xi_L \leq \left\| \sum_{\mathsf{T}\in \Tt(J)} \cic{1}_{9K I_{\mathsf T}}\right\|_1    \lesssim K \min\left\{ 2^M|J|, 2^N\right\}. 
\end{equation}
See \eqref{ref3} and \eqref{cetlocpf1} for the definition of $M,N$.
For all $L \in \cic{L}(J)$, we  apply  Lemma \ref{borerd} to $f=f\cic{1}_L$, suitably rescaled, with the choice $\Xi=\Xi_L$ and $Q=\lceil 2^{5}\eps^{-1}\rceil$. This choice of $Q$ will be kept throughout the remainder of the proof.
The functions   $g_L$ and $b_L$ have the following properties:
\begin{align} & f\cic{1}_L = g_L +b_L, \qquad \supp g_L, \, \supp b_L \subset 3L \label{thesupp}
, \\ &  \label{gLbd} \|g_L\|_2^2   \lesssim \eps^{-1}   K^2 (\#\Xi_L)^{1-\frac{2}{p'}}|L|,  \\ \label{bLbd}&  \|b_L\|_1 \lesssim \eps^{-1}   K (\#\Xi_L)^{\frac12-\frac{1}{p'}}|L|\lesssim   \eps^{-1}  2^{6M}|L|, \\ \label{vanishing} &\int b_L(x)  x^j \e^{-i \xi x } \, \d x =0 \qquad \forall j=0,\ldots,Q,  \, \forall \xi \in \Xi_L.
     \end{align}
     The second inequality in \eqref{bLbd} is a consequence of \eqref{Linfcons}.
We finally set
\begin{equation}
\label{thedefofg}
g^{(J)}:= f\cic{1}_{3KJ\setminus \mathbb L}+ \sum_{L \in \cic{L}(J)} g_L, \qquad b:=   \sum_{L \in \cic{L}(J)} b_L.
\end{equation}
Note that, taking advantage of \eqref{bLbd} and of the finite overlap \eqref{finov2}, one may argue as   in Remark \ref{themftrick}  to obtain that 
$$
\left\{x \in \R:\mathrm{M} b(x)\gtrsim  \eps^{-1} 2^{7M}\right\} \subset \bigcup_{L \in \cic{L}} 3L \subset  \left\{x \in \R:\mathrm{M}_p f(x)\geq 2^{3} K\right\}.
$$
Hence, recalling the definition of $\cic{J}$, 
\begin{equation}
\label{techpf}
\sup_{I \in \cic{J}} \inf_{x \in I} M b(x)  \lesssim \eps^{-1} 2^{7M}.
\end{equation}
We now choose  $C_0$ to be the larger of the two   implicit absolute constants in \eqref{bLbd} and \eqref{techpf}. It can be easily tracked that $C_0 \leq 2^{12}$. 
Recalling \eqref{outexcset}, we will use this later with $I=I_\T$, $\T \in \Tt$.
\subsection{Proof of \eqref{lemmaCZL2}-\eqref{lemmaCZsize}}
\begin{proof}[Proof of \eqref{lemmaCZL2}]
Note that $\|f\cic{1}_{3KJ\setminus \mathbb L}\|_\infty \lesssim  K.$ Therefore
\begin{equation}
\label{outerpartL2}
\|f\cic{1}_{3KJ\setminus \mathbb L}\|_2^2 \lesssim \min\left\{K^2|J|,K^{2-p}  \|f\|_p^p\right\} \leq   2^{2\eps M}\min\left\{|J|,1\right\}.
\end{equation}
Relying on \eqref{gLbd} and on the observation that $\{3L: L\in \cic{L}_j\}$ have bounded overlap, \begin{align*}&\quad \Big\|\sum_{L \in \cic L(J)} g_L \Big\|_2^2
\lesssim  \sum_{L \in \cic L(J)} \|g_L \|_2^2 \lesssim  \eps^{-1} K^2 \left( \sum_{L\in \cic L(J)}  |L|\right)^{ \frac{2}{p'}} \left( \sum_{L\in \cic L(J)} |L|\#\Xi_L\right)^{1-\frac{2}{p'}}\\ & \lesssim \eps^{-1} K^3 \min\left\{ 2^{M(1-\frac{2}{p'})}|J|, 2^{N(1-\frac{2}{p'})}\right\}\lesssim  \eps^{-1}  \min\left\{ 2^{M(1-\frac{2}{q})}|J|, 2^{3\eps M} 2^{N(1-\frac{2}{p'})}\right\}\end{align*}
where the penultimate estimate follows from \eqref{LJ} and \eqref{thecfbd}, and the last by recalling the definition of $  \eps$ in \eqref{eps0} and of $K$ in \eqref{ref3}. The proof of \eqref{lemmaCZL2} is finished by combining the last display with \eqref{outerpartL2}.
\end{proof}
\begin{proof}[Proof of \eqref{lemmaCZsize}]
First of all, we note that 
$$
f=g^{(J)} + b + h, \qquad h:= f\cic{1}_{  (\mathbb L)^c\cap (3KJ)^c}+ \sum_{\substack{L  \in  \cic{L}\\ 3L \cap 3KJ =\emptyset}} f\cic{1}_L. 
$$
Also note that
\[
\begin{split}
& 
\quad |F_\phi(g^{(J)})| \geq |F_\phi(g^{(J)})|\cic{1}_{\mathcal{Z}\setminus E_{f,\lambda,p}} \geq \left(|F_\phi(f)| -  |F_\phi(b)|- |F_\phi(h)| \right)\cic{1}_{\mathcal{Z}\setminus E_{f,\lambda,p}} \\ & \geq F - \left( |F_\phi(b)|\cic{1}_{\mathcal{Z}\setminus E_{3K\cic{L}}} + |F_\phi(h)| \right)
\end{split}\]
where the inclusion \eqref{distL} was used in the last inequality. Therefore, by comparison with 
   \eqref{lemmasele1app}-\eqref{lemmasele3}, \eqref{lemmaCZsize} will follow if we prove 
\begin{align} \label{easyloctrick} &
\sup_{\mathsf{T}\in \Tt(J)} \mathsf{s} (F_{\phi}(h)) (\mathsf{T} )   \leq    2^{-10k}, \\ \label{chon} &\sup_{\mathsf{T}\in \Tt (J)} \mathsf{s}(F_{\phi}(b) \cic{1}_{\mathcal{Z}\setminus E_{3K\cic{L}}}) (\mathsf{T} ) )   \leq    2^{-10k}.
\end{align}
We have used that $\mathsf{s}\geq \max\{\mathsf s_2, \mathsf s_\infty\}$   and the obvious fact
$
  \mathsf s_2(H) (\T) \geq \left(|I_\T|^{-1} \int_{\mathsf T^*} |H|^2\right)^{1/2}
$.   

By virtue of the fact that $\supp h \cap 3KJ=\emptyset$,   we can apply the intermediate estimate \eqref{goodemb3b} to $h$ with the collection $\cic{J}$ replaced by $ \{J\}$, obtaining the chain of inequalities   \begin{equation}
\label{firstchoice}\begin{split}
&\quad\sup_{\mathsf{T}\in \Tt(J)} \mathsf{s} (F_{\phi}(h)) (\mathsf{T} ) \leq\|F_{\phi } (h)\cic{1}_{E_{ \{J\}}}\|_{L^{\infty}( \mathcal Z, \sigma_{\cic{\alpha}},\mathsf{s} )} \\ &\leq C_{{  \alpha},\phi,Q}   K^{-Q} \inf_{x\in J} \M h(x)   \leq C_{{  \alpha},\phi,Q}   K^{-Q} \inf_{x\in J} \M f(x)\\ & \leq C_{{  \alpha},\phi,Q}   K^{-Q} \lambda \leq  C_{{  \alpha},\phi,Q}  2^{-k \eps Q}  \leq C_{{  \alpha},\phi,Q}   2^{-32k} \leq 2^{-10k}  \end{split}
\end{equation}
 from which \eqref{easyloctrick} follows. We have used $h\leq f$ in the last step on the second line and \eqref{outdyadint} in the passage to the third line.
  The last step in \eqref{firstchoice} was obtained by    recalling from \eqref{ref3} that $k \leq  M$, that $Q=\lceil2^5\eps^{-1}\rceil$   and   that $k_0$ was chosen in order to have $
C_{{ \alpha},\phi,Q}  \leq 2^{k_0} \leq 2^{k}.
$

 We turn to \eqref{chon}. Fix a tent $\mathsf{T}=T(I,\xi)\in \Tt(J)$. Define 
$$\cic{L}(I)=\{L \in \cic{L}(J): L \cap 3K I \neq \emptyset\}, \qquad
b^{\textrm{in}}:= \sum_{ L \in \cic{L}(I) } b_L, \qquad b^{\textrm{out}}= b-b^{\textrm{in}}.
$$
The $b^{\textrm{out}}$ part, supported outside $3KI$, is also handled via \eqref{goodemb3b}. Using below that $\mathrm{M} b^{\textrm{out}}\leq \mathrm{M}b$ pointwise, and later \eqref{techpf},  we actually estimate the larger quantity \begin{equation} \label{bout}\begin{split}  
 &\quad \mathsf{s}(F_{\phi}(b^{\textrm{out}}) ) (\T) \leq  \|F_{\phi } (b^{\textrm{out}})\cic{1}_{E_{ \{I\}}}\|_{L^{\infty}( \mathcal Z, \sigma_{\cic{\alpha}},\mathsf{s} )}\\ & \leq  C_{{\cic \alpha},\phi,Q}   K^{-Q} \inf_{x \in I} \mathrm{M}b(x)  \leq C_0 \eps^{-1} C_{{\cic \alpha},\phi,Q}   2^{M(7-Q\eps)} \leq 2^{-10k}.  \end{split}
  \end{equation}
The last step has been obtained since $Q\geq 2^5\eps^{-1}$ and later using $M\geq k$ and the definition of $k_0$ \eqref{A0}.  Now we have to control the $b^{\textrm{in}}$ part. Observe that by our choice of $\Xi_L$ \eqref{freqdef}, and relying on \eqref{theinters},   $\xi \in \Xi_L $ for all $L\in \cic{L}(I)$.
Thus, taking also \eqref{thesupp}, \eqref{bLbd}  and \eqref{vanishing} into account, we have \[\begin{split} & \supp b_L \subset 3L, \qquad \|b_L\|_1 \leq C_0 \eps^{-1} 2^{6M}|L|, \\ &
\int b_L(x)  x^j \e^{-i \xi x } \, \d x =0 \qquad \forall j=0,\ldots,2^{5}\eps^{-1},
\end{split}
\] for all $L \in \cic{L}(I)$.
In other words, also in view of the finite overlap   \eqref{finov2}, comparing with Remark \ref{suffcond}, $b^{\mathrm{in}}$ satisfies the assumptions of  Lemma \ref{gloc3} with $\lambda= 3C_0 \eps^{-1} 2^{6M}$,  $\cic{L}$ replaced by  $\{3L:L \in \cic{L}(I)\}$, and $Q=2^5\eps^{-1}$.  Applying  the lemma, we obtain
$$\mathsf{s}\left(F_{\phi }(b^{\textrm{in}})\cic{1}_{\mathcal Z\setminus E_{3K\cic{L}}} \right) (\mathsf{T}) \leq  3C_0\eps^{-1}C_{{\cic \alpha},\phi,Q}  2^{M(6-Q\eps)}  \leq 2^{-10k}.
$$
 The last inequality is obtained in the same fashion as \eqref{bout}. Combining \eqref{bout} with the last display, we have finished the proof of \eqref{chon}, and, in turn, of \eqref{lemmaCZsize}.
\end{proof}

\setcounter{section}{0}
\setcounter{proposition}{0}
\renewcommand{\theproposition}{\thesection.\arabic{proposition}}
\setcounter{equation}{0}
\renewcommand{\theequation}{\thesection.\arabic{equation}}
\setcounter{figure}{0}
\setcounter{table}{0}

\appendix
\section{Remarks on the Hilbert space valued setting} 

We would like to remark that our main theorem can be easily generalized to the setting of Hilbert space valued functions. Let $\mathcal{H}$ be a separable Hilbert space with orthonormal basis $\{h_n: n \in \mathbb N\}$. By $L^{p}(\R; \mathcal{H})$ we denote the usual $\mathcal{H}$-valued Bochner spaces. The inner product associated to $\mathcal{H}$ is denoted as $\langle \cdot,\cdot\rangle_{\mathcal{H}}$. Then for Schwartz functions $f_1,f_2:\mathbb{R}\rightarrow\mathcal{H}$ and $f_3:\mathbb{R}\rightarrow \mathbb{C}$, one can naturally define
\begin{equation}
\mathsf{V}_{\vec \beta} (f_1,f_2,f_3)= \int_{\R\times (0,\infty)\times \R} \langle G_1,G_2\rangle_{\mathcal{H}} G_3 \, \d u \d t \d \eta, \quad G_j(u,t,\eta):=F_{\phi}(f_j)(u,t,\alpha_j \eta + \beta_j t^{-1}).
\end{equation}
where $\vec\beta$ is as before. We then have the following theorem, which is the Hilbert space valued analog of Theorem \ref{modelsumprop}.
\begin{theorem}\label{modelsumpropH}Let
\begin{equation}
\label{rangeH} \textstyle  1<p_1,p_2\leq \infty, \qquad \frac23 <r:=  \frac{p_1 p_2}{p_1+p_2}<\infty.
\end{equation}
 For all sets $A\subset \R$ of finite measure, and for all  Schwartz functions $f_1,f_2:\mathbb{R}\rightarrow\mathcal{H}$ we can find a subset $\widetilde A\subset A$ such that $|A| \leq 2 |\widetilde A|$ and 
\[
 \big|\mathsf{V}_{\vec \beta} (f_1,f_2,f_3)\big| \leq C_{\vec\beta, p_1,p_2, } \|f_1\|_{L^{p_1}(\R;\mathcal{H})} \|f_2\|_{L^{p_2}(\R;\mathcal{H})} |A|^{1-\frac{1}{r}} \qquad \forall |f_3| \leq \cic{1}_{\widetilde A}.
\]
\end{theorem}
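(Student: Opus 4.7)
The plan is to replicate the proof of Theorem \ref{modelsumprop} verbatim, after reducing the Hilbert space valued trilinear form to a scalar integrand involving $\mathcal H$-norms and establishing Hilbert space valued analogues of the underlying Carleson embeddings. The first move is to bound, pointwise on $\mathcal Z$,
\[
\big|\langle G_1,G_2\rangle_{\mathcal H}\, G_3\big|\leq \|G_1\|_{\mathcal H}\|G_2\|_{\mathcal H}|G_3|
\]
via Cauchy--Schwarz in $\mathcal H$, so that $|\mathsf{V}_{\vec\beta}(f_1,f_2,f_3)|$ is dominated by the integral on $\mathcal Z$ of the product of the three nonnegative scalar functions $\|G_j\|_{\mathcal H}$ (with $\|G_3\|_{\mathcal H}:=|G_3|$). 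The outer H\"older inequality of Lemma \ref{lemmaredux} is then directly available, since its proof in \cite{DoThiele15} only uses the outer measure structure and not the nature of the $F_j$'s.

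The core task is therefore to establish the Hilbert space valued versions of \eqref{goodemb1}--\eqref{goodemb2}, of Theorem \ref{bademb} and of Proposition \ref{goodemb}, each time with $F_\phi(f)$ replaced by $\|F_\phi(f)\|_{\mathcal H}$ and $\|f\|_p$ by $\|f\|_{L^p(\R;\mathcal H)}$; the exceptional set \eqref{maxfctbd} is defined with $\mathrm{M}_p(\mathrm{M}\|f\|_{\mathcal H})$ in place of $\mathrm{M}_p(\mathrm{M}f)$. These embeddings follow the scalar proofs with essentially no modification, thanks to two compatibility identities between the sizes in \eqref{size} and the $\mathcal H$-norm. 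Expanding $f=\sum_n f^{(n)}h_n$ with $f^{(n)}\in\mathbb C$, one has by Fubini
\[
\mathsf s_2\big(\|F_\phi(f)\|_{\mathcal H}\big)(\T)^2=\sum_{n}\mathsf s_2\big(F_\phi(f^{(n)})\big)(\T)^2,
\]
while Minkowski's inequality gives $\mathsf s_\infty(\|F_\phi(f)\|_{\mathcal H})(\T)\leq \sum_n\|h_n\|_{\mathcal H}\mathsf s_\infty(F_\phi(f^{(n)}))(\T)$, which upon truncation to coordinates and summation in $\ell^2$ reduces the $L^\infty$-size estimates to a direct $\mathcal H$-valued integration by parts identical to the scalar one. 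Consequently, all the ingredients of Sections \ref{secoutlp}--\ref{secembzero} (the adapted system lemmata, the area integral estimate of Lemma \ref{areaintegral}, and the localized embeddings of Lemmata \ref{loc1}--\ref{loc3}) transfer literally upon replacing moduli with $\mathcal H$-norms; Plancherel on $L^2(\R;\mathcal H)$ and the vector Calder\'on--Zygmund decomposition on $L^1(\R;\mathcal H)$ supply the only nontrivial inputs, both of which are classical.

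The single genuinely new ingredient lives inside the proof of the Calder\'on--Zygmund Lemma \ref{lemmaCZ}, where the scalar projection Lemma \ref{borerd} must be upgraded to its Hilbert space valued counterpart \ref{borerdH} announced in the appendix. This is where the main obstacle lies, but it is of a routine nature: the finite-dimensional subspace $\mathcal V_{\Xi,Q}=\mathrm{span}\{x^j\mathrm{e}^{-i\xi x}\}$ of $L^2(3L)$ tensors with $\mathcal H$ to produce a closed subspace of $L^2(3L;\mathcal H)$, onto which one orthogonally projects $f\cic{1}_L$ to split off a good part $g$ with $\|g\|_{L^2(\R;\mathcal H)}^2\lesssim \lambda^2(Q\#\Xi)^{1-2/p'}|L|$ and a bad part $b$ whose $\mathcal H$-valued moments $\int b(x)\,x^j\mathrm{e}^{-i\xi x}\,\d x\in\mathcal H$ vanish for $j\leq Q$ and $\xi\in\Xi$. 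With $g^{(J)}$ constructed coordinate-wise as in \eqref{thedefofg}, the bounds \eqref{lemmaCZL2}--\eqref{lemmaCZsize} are recovered by the same argument, since the estimates on the bad part in the proof of Lemma \ref{loc3} use only the $L^1$ size of each $b_L$, the vanishing moments against $\mathrm{e}^{-i\xi x}x^j$, and the support conditions, all of which transfer under $\|\cdot\|_{\mathcal H}$.

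Granted these vector-valued embeddings, the proof of Theorem \ref{modelsumpropH} is then identical to that of Theorem \ref{modelsumprop}: choose $\widetilde A=A\setminus(H_1\cup H_2)$ with $H_j=\{\mathrm{M}_{p_j}(\mathrm{M}\|f_j\|_{\mathcal H})>C_0\}$, decompose $\mathcal Z$ according to the exceptional sets $E_k$ built from $\|f_j\|_{\mathcal H}$, and combine Lemma \ref{lemmaredux} applied to $\|G_1\|_{\mathcal H},\|G_2\|_{\mathcal H},G_3$ with the Hilbert valued versions of \eqref{cet}, \eqref{goodemb1}, \eqref{goodemb3} to sum the resulting geometric series in $k$. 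The outcome is $|\mathsf V_{\vec\beta}(f_1,f_2,f_3)|\lesssim 1$ after the initial normalization $\|f_1\|_{L^{p_1}(\R;\mathcal H)}=\|f_2\|_{L^{p_2}(\R;\mathcal H)}=|A|=1$, which is the claim up to interpolation for the easier cases where some $p_j=2$.
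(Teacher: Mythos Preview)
Your proposal is correct and follows essentially the same route as the paper: reduce via Cauchy--Schwarz in $\mathcal H$ to a scalar integrand, define the sizes with $\|\cdot\|_{\mathcal H}$ in place of modulus, transfer all the embedding machinery (Sections \ref{secoutlp}--\ref{secgentents}) to the $\mathcal H$-valued setting, and rerun the proof of Theorem \ref{modelsumprop}. You also correctly isolate Lemma \ref{borerdH} as the one place requiring new work. The paper supplies somewhat more detail at precisely that spot: the norm bound $\|g\|_{L^2(\R;\mathcal H)}^2\lesssim \lambda^2(Q\#\Xi)^{1-2/p'}|L|$ is not automatic from orthogonal projection alone, and the paper proves it via an auxiliary Lemma \ref{prelim} exploiting translation invariance of $\mathcal V_{\Xi,Q}$ to obtain a uniform $L^\infty\lesssim N^{1/2}L^2$ bound on the subspace, then applies this coordinate-wise and uses Riesz representation---this is the content you label ``routine.''
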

We remark that Theorem \ref{modelsumpropH} implies the Hilbert space valued version of the full range of estimates for the bilinear Hilbert transform, which was not known before. The proof of Theorem \ref{modelsumpropH} utilizes the Carleson embedding theorem in the $\mathcal{H}$-valued setting, which can be easily generalized from the scalar valued case (Theorem \ref{bademb}) that has been presented, with the proper adaptation of sizes and outer $L^p$ spaces. Roughly speaking, the $\mathcal{H}$-adapted sizes are the same as in the scalar valued case but with the absolute value of $F$ replaced by the $\mathcal{H}$ norm. For example, in the case of generalized tents, let $F: \mathcal{Z}\rightarrow \mathcal{H}$ measurable, the sizes are defined as
\[
\begin{split}&
\mathsf{s}_2(F) (\T(I,\xi)):= \left(\frac{1}{|I|} \int_{\T^\ell_{\cic \alpha}(I,\xi)} \|F(u,t,\eta)\|_{\mathcal{H}}^2 \,  \d u \d t\d \eta  \right)^{\frac12},  \\ &\mathsf{s}_\infty(F) (\T(I,\xi)):=\sup_{(u,t,\eta) \in\T_{\cic \alpha}(I,\xi)} \|F(u,t,\eta)\|_{\mathcal{H}}, \\ &  \mathsf{s}:=\mathsf{s}_2+ \mathsf{s}_\infty.
\end{split}
\]

Similar basic properties hold for outer $L^p$ spaces in this setting, which we refer to \cite{DPOu} for details. In fact, the only part of the argument in the proof of Theorem \ref{modelsumpropH} that might need some further explanation is the adapted version of Lemma \ref{borerd}, which we state as the following.
\begin{lemma} \label{borerdH}
Let $f \in L^{p}(\R; \mathcal{H})$,  $1\leq p<2$. Let  $L\subset \R$ be an interval and  $$\lambda= 
|L|^{-\frac1p}\| f\cic{1}_L\|_{L^{p}(\R; \mathcal{H})} . $$
Let $\Xi\subset \R$ be a finite set and $Q\geq 0$ an integer. Then
$
f\cic{1}_L= g+ b
$
with $\supp g, \, \supp b \subset 3L$, and
\[
\begin{split}
&
\|g\|^2_{L^2(\R; \mathcal{H})} \lesssim \lambda^2  (Q \#\Xi)^{1-\frac{2}{p'}}|L| ,  \\ &\int  b(x)  x^j \e^{-i \xi x } \, \d x =0 \qquad \forall j=0,\ldots, Q, \; \xi \in \Xi.
\end{split}
\]
\end{lemma}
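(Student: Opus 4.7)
My plan is to mimic the projection argument that proves Lemma \ref{borerd}, taking advantage of the fact that the ambient finite-dimensional subspace $\mathcal V_{\Xi,Q}:=\mathrm{span}\{x^j e^{-i\xi x}:0\leq j\leq Q,\,\xi\in \Xi\}$ has dimension $d\leq (Q+1)\#\Xi$ independently of $\mathcal H$. First I would fix an $L^2(3L)$-orthonormal basis $\{v_1,\ldots,v_d\}$ of $\mathcal V_{\Xi,Q}$ and define the $\mathcal H$-valued orthogonal projection
\[
P\colon L^p(3L;\mathcal H)\to \mathcal V_{\Xi,Q}\otimes \mathcal H, \qquad Pf(x):=\sum_{i=1}^d v_i(x)\int_{3L}\overline{v_i(y)}f(y)\,dy,
\]
where the inner integral is a Bochner integral in $\mathcal H$, well-defined since each $v_i$ is bounded on $3L$. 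The decomposition is then $g:=P(f\cic{1}_L)$ and $b:=f\cic{1}_L-g$, both obviously supported in $3L$. The vanishing moment conditions on $b$ are immediate from orthogonality: for any $j\in\{0,\ldots,Q\}$, $\xi\in\Xi$, and $h\in \mathcal H$, the function $x\mapsto x^j e^{-i\xi x}h$ belongs to $\mathcal V_{\Xi,Q}\otimes\mathcal H$, so
\[
\Bigl\langle \int b(x)\, x^j e^{-i\xi x}\,dx,\,h\Bigr\rangle_{\mathcal H} = \langle b,\,x^j e^{-i\xi x}h\rangle_{L^2(3L;\mathcal H)}=0
\]
for every $h$, forcing the Bochner integral to vanish in $\mathcal H$.

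For the $L^2$ estimate on $g$, I would proceed by duality, which captures the sharp exponent. Writing
\[
\|g\|_{L^2(\R;\mathcal H)} = \sup \bigl\{|\langle f\cic{1}_L,\,h\rangle| \colon h\in \mathcal V_{\Xi,Q}\otimes\mathcal H,\ \|h\|_{L^2(\R;\mathcal H)}\leq 1\bigr\}\leq \lambda |L|^{1/p}\sup_h \|h\|_{L^{p'}(3L;\mathcal H)},
\]
where the last step combines H\"older in $x$ with Cauchy--Schwarz in $\mathcal H$, the remaining task is to establish the $\mathcal H$-valued Bernstein--Nikolski inequality
\[
\|h\|_{L^{p'}(3L;\mathcal H)}\lesssim d^{\frac12-\frac1{p'}}|L|^{\frac1{p'}-\frac12}\|h\|_{L^2(3L;\mathcal H)},\qquad h\in \mathcal V_{\Xi,Q}\otimes\mathcal H.
\]
Substituting this bound, the required estimate $\|g\|_{L^2(\R;\mathcal H)}^2\lesssim \lambda^2 d^{1-2/p'}|L|$ follows at once from $2/p+2/p'=2$.

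The only step genuinely sensitive to the Hilbert-valued setting is this last Bernstein--Nikolski inequality, and it is the main obstacle. For $2\leq p'<\infty$ I would reduce it to the scalar case by writing $h=\sum_n g_n h_n$ in the orthonormal basis of $\mathcal H$, with each $g_n\in \mathcal V_{\Xi,Q}$, and applying Minkowski's inequality in $L^{p'/2}$ to $\|h\|_{\mathcal H}^2=\sum_n|g_n|^2$:
\[
\|h\|_{L^{p'}(\mathcal H)}^2 = \Bigl\|\sum_n |g_n|^2\Bigr\|_{L^{p'/2}}\leq \sum_n \|g_n\|_{p'}^2\lesssim d^{1-\frac2{p'}}|L|^{\frac2{p'}-1}\sum_n \|g_n\|_2^2 = d^{1-\frac2{p'}}|L|^{\frac2{p'}-1}\|h\|_{L^2(\mathcal H)}^2,
\]
the penultimate inequality being the scalar Bernstein--Nikolski inequality for polynomial-exponential subspaces, which is the engine underlying Lemma \ref{borerd}. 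The endpoint case $p=1$, $p'=\infty$ falls outside Minkowski's reach but is handled by the cleaner pointwise bound $\|h(x)\|_{\mathcal H}^2\leq \bigl(\sum_i |v_i(x)|^2\bigr)\|h\|_{L^2(\mathcal H)}^2$ from Cauchy--Schwarz in $\mathbb C^d$, combined with the standard estimate $\sup_x \sum_i|v_i(x)|^2\lesssim d/|L|$ for the reproducing kernel of $\mathcal V_{\Xi,Q}$ in $L^2(3L)$. Beyond this Minkowski reduction no new difficulty arises.
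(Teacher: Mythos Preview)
Your proof is correct and follows the same architecture as the paper's: project $f\cic{1}_L$ onto $\mathcal V_{\Xi,Q}\otimes\mathcal H$ in $L^2(3L;\mathcal H)$, bound $\|g\|_{L^2}$ by duality, and reduce to an $\mathcal H$-valued Bernstein--Nikolski inequality. The execution of that last step differs slightly. The paper first proves the \emph{scalar} $L^\infty$--$L^2$ inequality from scratch via a translation-invariance argument (Lemma~\ref{prelim}), lifts it to $\mathcal H$ by summing over an orthonormal basis of $\mathcal H$ (this is exactly your Cauchy--Schwarz/reproducing-kernel argument), and then interpolates with the trivial $L^2$ endpoint to cover all $p'\geq 2$ at once. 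Your Minkowski reduction in $L^{p'/2}$ is a valid alternative, but it is redundant once the $L^\infty$ endpoint is established, and it relies on the scalar $L^{p'}$ inequality as a black box---which the paper does not prove separately, since Lemma~\ref{borerd} is obtained only as the special case $\mathcal H=\mathbb C$ of Lemma~\ref{borerdH}. Also note that your ``standard'' reproducing-kernel bound $\sup_{x}\sum_i|v_i(x)|^2\lesssim d/|L|$ is precisely the content of Lemma~\ref{prelim} and hinges on the translation invariance of $\mathcal V_{\Xi,Q}$; for a generic $d$-dimensional subspace of $L^2(3L)$ it fails.
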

This projection lemma extends the result of \cite{BE, NOT} to the Hilbert space valued setting, as well as to a more general translation invariant finite dimensional subspace of $L^2(3L)$, which was unknown before even in the scalar-valued setting. We will need the following preliminary lemma in order to prove Lemma \ref{borerdH}.
\begin{lemma}\label{prelim}
Let $v_j:\R\to \mathbb C,$ $ j=1,\ldots,N$ be continuous functions  with the property that
$$v \in V:=\mathrm{span}_{\mathbb{C}} \{v_j:j =1,\ldots, N \}\implies v(\cdot+t)\in V \qquad \forall  t \in \R.$$  Then, for all intervals $I\subset \R$
$$
\sup_{x\in I}  |v(x)| \leq \sqrt{ \frac{N}{|I|}  \int_{3I}|v(x)|^2 \,\d x} \qquad \forall v \in V, \, \forall R>0.
$$
\end{lemma}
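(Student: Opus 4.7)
The plan is to exploit translation invariance together with the $L^2$-reproducing kernel of $V$ on a symmetric interval to reduce the desired bound to a trace computation equal to $\dim V \leq N$.

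First, fix $v \in V$, $x_0 \in I$, and set $L := |I|$. Applying translation invariance, replace $v$ with $\tilde v(\cdot) := v(\cdot + x_0) \in V$, so that $\tilde v(0) = v(x_0)$ and $\int_{3I}|v|^2 = \int_{3I - x_0}|\tilde v|^2$. Since $x_0 \in I$ implies $[-L, L] \subseteq 3I - x_0$, it suffices to prove the canonical inequality
\begin{equation*}
|v(0)|^2 \leq \frac{N}{L}\int_{-L}^{L}|v|^2 \qquad \forall v \in V.
\end{equation*}

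Next, introduce the reproducing kernel $K^J_y \in V|_J$ for point evaluation at $y\in J$ under the $L^2(J)$ inner product; by Cauchy--Schwarz, the canonical inequality is equivalent to the Christoffel-type bound $K^{[-L,L]}_0(0) \leq N/L$. Two properties of $K^J_y$ are needed. The first is monotonicity: for $0 \in J' \subseteq J$, the identity $K^J_0(0) = \sup_{v\neq 0} |v(0)|^2 / \|v\|_{L^2(J)}^2$ immediately yields $K^{J'}_0(0) \geq K^J_0(0)$. The second is a translation symmetry, $K^{J+s}_{y+s}(z+s) = K^J_y(z)$ for every $s\in\R$, which I would verify by checking the reproducing property through the isometry $\tau_s : L^2(J) \to L^2(J+s)$ combined with $\tau_{-s}V = V$.

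These two facts combine via an averaging step. Consider the one-parameter family of length-$L$ intervals $J_s := [-L/2+s, L/2+s]$, $s \in [-L/2, L/2]$, each containing $0$ and contained in $[-L, L]$. By monotonicity, $K^{J_s}_0(0) \geq K^{[-L,L]}_0(0)$; by translation symmetry, $K^{J_s}_0(0) = K^{[-L/2, L/2]}_{-s}(-s)$. Averaging over $s$ and changing variables $t = -s$,
\[
L \cdot K^{[-L,L]}_0(0) \leq \int_{-L/2}^{L/2} K^{[-L/2, L/2]}_t(t)\,dt = \dim V|_{[-L/2, L/2]} \leq N,
\]
where the penultimate equality is the standard trace identity $\int_J K^J_t(t)\,dt = \dim V|_J$, obtained from an orthonormal basis expansion $K^J_t(t) = \sum_j |e_j(t)|^2$. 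Chaining the reductions gives the lemma. The central obstacle is identifying the correct one-parameter family of subintervals on which monotonicity and translation symmetry interlock to produce exactly the dimensional bound; after that, the argument is just Cauchy--Schwarz, monotonicity of the Christoffel function, and a trace computation, with translation invariance of $V$ supplying the only nontrivial ingredient.
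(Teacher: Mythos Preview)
Your argument is correct and is essentially the same as the paper's, repackaged in reproducing-kernel/Christoffel-function language: both proofs rest on translation invariance of $V$, Cauchy--Schwarz, and the trace identity $\int_J \sum_j |w_j(t)|^2\,dt=\dim V|_J\le N$. The only organizational difference is that the paper first uses the trace identity to locate a single point $x_0\in I$ with $\sum_j|w_j(x_0)|^2\le N/|I|$ and then translates the function so that the evaluation point lands at $x_0$ (absorbing the shifted integration domain into $3I$), whereas you translate the interval and average the Christoffel function over the family $J_s$, invoking monotonicity; these are dual ways to exploit the same averaging.
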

\begin{proof}
By translation and scaling invariance of the assumptions and conclusions, we can reduce to the case  $I=(-1/2,1/2)$. Consider $V_I= \mathrm{span}_{\mathbb{C}} \{v_j\cic{1}_{I},j=1,
\ldots,N\}$ as a linear subspace of $L^2(I)$ of dimension $n\leq N$. Let $\{w_1,\ldots,w_n\}$ be an orthonormal basis of $V_I$. 
Since $$
\int_{I} \sum_{j=1}^n |w_j(x)|^2 \, \d x =n
$$ 
there exists $x_0\in I $ such that
$$
|I| \sum_{j=1}^n |w_j(x_0)|^2 \leq n\leq N.
$$
Fix $v \in V_I$, and $x \in I$, and denote $v(\cdot)=\tilde v(\cdot +x_0-x)\cic{1}_I(\cdot) \in V_I$. We then have
\[
\begin{split}
&\quad |v(x)|^2 = |\tilde v(x_0)|^2 =\left| \sum_{j=1}^n \l \tilde v,w_j \r w(x_0)\right|^2 \leq   \left( \sum_{j=1}^n |w_j(x_0)|^2 \right) \left( \sum_{j=1}^n |\l \tilde v,w_j \r|^2  \right) \\ &  \leq  \frac{N}{|I|} \int_{I}|\tilde v(t)|^2 \,\d t \leq \frac{N}{|I|} \int_{3I}|  v(t)|^2 \,\d t
\end{split}
\]
which concludes the proof.
\end{proof}

\begin{proof}[Proof of Lemma \ref{borerdH}] There is no loss in generality with assuming that $\mathcal{H}$ is finite dimensional and $\{h_n\}$ is finite with cardinality $d$. By scaling invariance, we can reduce to the case $|I|=1$. The linear space
 $$
 V=\mathrm{span }_{\mathbb C} \{x\mapsto x^j \e^{-i\xi_\ell x}, j=0,\ldots, Q, \ell=1,\ldots, \#\Xi\}, \qquad N= (Q+1)\#\Xi.
 $$
 satisfies the assumptions of Lemma \ref{prelim} and let us use the same notation as in the lemma for $V_I$ and its orthonormal basis $\{w_1,
\ldots,w_N\}$. Let $$\mathcal V=\left\{h=\sum_{j=1}^d u_j h_j, \, u_j \in V, h_j \in \mathcal{H}\right\}.$$  
  We claim that
  \begin{equation}
\label{thecentral} \sup_{x \in I} \|h(x)\|_{\mathcal{H}}= \sup_{x \in I} \left(\sum_{j=1}^d |\l h(x),h_j \r| ^2\right)^{\frac12} \leq N^{\frac12} \|h\cic{1}_{3I}\|_{L^2(\R; \mathcal{H}) }
\end{equation}
which implies by interpolation that
  \begin{equation}
\label{thecentral2}  \|h\cic{1}_I \|_{L^{p'}(\R;\mathcal{H})}  \leq N^{\frac12-\frac{1}{p'}} \|h\cic{1}_{3I}\|_{L^2(\R; \mathcal{H}) }
\end{equation}
which is the one we will use. To prove \eqref{thecentral},
fix $x \in I$. Noting that $\l h(\cdot),h_j\r=u_j\in V$, we apply Lemma \ref{prelim} and obtain 
$$
|\l h(x),h_j \r|^2 \leq N \int_{3I} |\l h(x),h_j \r |^2 \d x 
$$
therefore
$$
\sum_{j=1}^d
|\l h(x),h_j \r|^2 \leq N \int_{3I} \sum_{j=1}^d  |\l h(x),h_j \r |^2 \d x = N  \int_{3I}  \| h(x)\|_{\mathcal{H}}^2\,  \d x =N \|h\cic{1}_{3I}\|_{L^2(\R; \mathcal{H}) }^2
$$
as claimed.
Using \eqref{thecentral2} and the assumption on $f$, $f\cic{1}_I$ defines a linear functional on the restriction of  $\mathcal V$  to $3I$ viewed as a subspace of $L^2(3I; \mathcal{H})$, with norm bounded by $\lambda N^{\frac12-\frac{1}{p'}}$. Then, using the Riesz representation theorem there exists
$g\in L^2(3I; \mathcal{H}) $ such that 
$$
\int_{I} \l f(x), h\r_{\mathcal{H}} \,\d x= \int_{3I} \l g(x), h\r_{\mathcal{H}} \,\d x \qquad \forall h \in \mathcal{V}.
$$
and satisfying the norm inequality.
Defining  the $\mathcal{H}$-valued function $b=f\cic{1}_I-g$ whose support is in  $3I$, we have 
$$
\int_{3I} \l b(x), h_j\r_{\mathcal{H}} x^k\e^{-i\xi_\ell x}  \,\d x= 0 \qquad \forall j, k, \ell,
$$
which implies that $b$ satisfies the vanishing moments conditions. The proof is complete. 
\end{proof}
\bibliography{biblioUMD}{}
\bibliographystyle{amsplain}

\end{document}